\newtheorem{theorem}{Theorem}[section]
\newtheorem{proposition}[theorem]{Proposition}
\newtheorem{lemma}[theorem]{Lemma}
\newtheorem{corollary}[theorem]{Corollary}
\newtheorem{rem}[theorem]{Remark}
\def\cC{\mathcal C}
\def\cP{\mathcal P}
\def\cX{\mathcal X}
\def\K{\mathbb{K}}
\def\ord{\mbox{\rm ord}}
\def\div{\mbox{\rm div}}
\def\Div{\mbox{\rm div}}
\def\supp{\mbox{\rm Supp}}
\def\gg{\mathfrak{g}}
\def\gm{\mathfrak{\gamma}}
\newcommand{\PSL}{\mbox{\rm PSL}}
\newcommand{\PGL}{\mbox{\rm PGL}}
\newcommand{\PSU}{\mbox{\rm PSU}}
\newcommand{\aut}{\mbox{\rm Aut}}
\def\supp{{\rm Supp}}
\newcommand{\ha}{{\textstyle\frac{1}{2}}}
\newcommand{\bA}{{\bf A}}
\newcommand{\bS}{{\bf S}}
\title{Transcendency Degree One Function Fields Over a Finite Field with Many Automorphisms}
\date{}
\author{G\'abor Korchm\'aros, Maria Montanucci and Pietro Speziali}
\begin{document}
\maketitle


\begin{abstract} Let $\mathbb{K}$ be the algebraic closure of a finite field $\mathbb{F}_q$ of odd characteristic $p$. For a positive integer $m$ prime to $p$, let $F=\mathbb{K}(x,y)$ be the transcendency degree $1$ function field defined by  $y^q+y=x^m+x^{-m}$. Let $t=x^{m(q-1)}$ and $H=\mathbb{K}(t)$. The extension $F|H$ is a non-Galois extension. Let $K$ be the Galois closure of $F$ with respect to $H$. By Stichtenoth \cite{SH}, $K$ has genus $\gg(K)=(qm-1)(q-1)$, $p$-rank (Hasse-Witt invariant) $\gamma(K)=(q-1)^2$ and a $\mathbb{K}$-automorphism group of order at least $2q^2m(q-1)$. In this paper we prove that this subgroup is the full $\mathbb{K}$-automorphism group of $K$; more precisely $\aut_{\mathbb {K}}(K)=Q\rtimes D$ where $Q$ is an elementary abelian $p$-group of order $q^2$ and $D$ has a index $2$ cyclic subgroup of order $m(q-1)$. In particular, $\sqrt{m}|\aut_{\mathbb{K}}(K)|>\gg(K)^{3/2}$, and if $K$ is ordinary (i.e. $\gg(K)=\gamma(K)$) then $|\aut_{\mathbb{K}}(K)|>\gg^{3/2}$. On the other hand, if $G$ is a solvable subgroup of the $\mathbb{K}$-automorphism group of an ordinary, transcendency degree $1$ function field $L$ of genus $\gg(L)\geq 2$ defined over $\mathbb{K}$,
then $|\aut_{\mathbb{K}}(K)|\le 34 (\gg(L)+1)^{3/2}<68\sqrt{2}\gg(L)^{3/2}$; see \cite{KM}. This shows that $K$ hits this bound up to the constant $68\sqrt{2}$.

Since $\aut_{\mathbb{K}}(K)$ has several subgroups, the fixed subfield $F^N$ of such a subgroup $N$ may happen to have many automorphisms provided that the normalizer of $N$ in $\aut_{\mathbb{K}}(K)$ is large enough.
This possibility is worked out for subgroups of $Q$.
\end{abstract}
\section{Introduction}
\label{intr}
Let $L$ be a transcendency degree one function field defined over an algebraically closed field $\mathbb{K}$, i.e. $L=\mathbb{K}(\cX)$ where $\cX$ is an algebraic curve defined over $\mathbb{K}$. It is well known that if $L$ is neither rational, nor elliptic then the $\mathbb{K}$-automorphism group $\aut(L)$ of $L$ is finite. More precisely, $|\aut(L)|\leq 16\gg(L)^4$ with just one exception, namely the Hermitian function field $H=H(x,y),\,y^q+y=x^{q+1}$ with $q=p^k$ whose genus equals $\ha q(q-1)$ and $\mathbb{K}$-automorphism group has order $(q^3+1)q^3(q^2-1)$; see \cite{stichtenoth1973II}. This bound was refined by Henn in \cite{henn1978} \textcolor{blue} and for special families of curves in \cite{GK3,GK4,GK5,gupr}.

In \cite{KM} the authors investigated the case where $L$ is ordinary, i.e. its genus and $p$-rank coincide, and they
showed for this case that if $G$ is a solvable subgroup of $\aut(L)$ then
\begin{equation}
\label{27oct2016}
|G|\le 34 (\gg(L)+1)^{3/2}<68\sqrt{2}\gg(L)^{3/2}.
\end{equation}

By Stichtenoth \cite{SH}, the Galois closure $K$ of $F|H$ where $F=\mathbb{K}(x,y)$ with $y^q+y=x^m+x^{-m}$ where $q=p^k$, $m$ is a positive integer prime to $p$, $H=\mathbb{K}(x^{m(q-1)})$, has genus $\gg(K)=(q-1)(qm-1)$, $p$-rank $\gamma(K)=(q-1)^2$ and size of the Galois group ${|\rm{Gal}}(K|H)|\geq q^2m(q-1)$. For $m=1$, $K$ is ordinary and it provides an example hitting the bound (\ref{27oct2016}), up to the constant term.

In Section \ref{mains} we prove that this subgroup is almost the full $\mathbb{K}$-automorphism group of $K$; more precisely $\aut_{\mathbb {K}}(K)=Q\rtimes D$ where $Q$ is an elementary abelian $p$-group of order $q^2$ and $D$ has a index $2$ cyclic subgroup of order $m(q-1)$. Moreover, $Q$ is defined over $\mathbb{F}_{q^2}$ while $D$ is defined over $\mathbb{F}_{q^r}$ where $r$ is the smallest positive integer such that $m(q-1) \mid (q^r-1)$. We also give an explicit representation for $K$ showing that $K=\mathbb{K}(x,y,z)$ with $y^q+y=x^m+x^{-m}$ and $z^q+z=x^m$.

Since $\aut_{\mathbb{K}}(K)$ has several subgroups, the fixed subfield $F^N$ of 
some of such subgroups $N$ may happen to have many automorphisms provided that the normalizer of $N$ in $\aut_{\mathbb{K}}(K)$ is large enough.
In Section \ref{quot}, this possibility is worked out for subgroups of $\Delta$.

\section{Background and Preliminary Results}\label{sec2}
In this paper, $\mathbb{K}$ denotes an algebraically closed field of odd characteristic $p$. Let $L$ denote a transcendency degree 1 function field with constant field $\mathbb{K}$; equivalently let $L$ denote the function field $\mathbb{K}(\cX)$ of a (projective, non-singular, geometrically irreducible, algebraic) curve $\cX$ defined over $\mathbb{K}$. The subject of our paper is the group of automorphisms $\aut_{\K}(L)$ of $L$ which fix $\mathbb{K}$ elementwise, and we begin by collecting basic facts and known results on $\aut_{\K}(L)$ that will be used in our proofs. For more details, the \textcolor{blue}{r}eader is referred to \cite{HKT} and \cite{stichtenoth1993}.

For a subgroup $G$ of $\aut_\mathbb{K}(L)$, the fixed field $L^G$ of $L$ is the subfield of $L$ fixed by every element in $G$. The field extension $L|L^G$ is  Galois of degree $|G|$. Take a place $\bar{P}$ of $L^G$ together with a place $P$ of $L$ lying over $P$, that is, let $P$ be an extension of $\bar{P}$ to $L$. The integer $e=e(P|\bar{P})$ defined by $v_{P}(x)=e v_{\bar{P}}(x)$ for all $x \in L^G$ is the \emph{ramification index} of $P|\bar{P}$, and  $P|\bar{P}$ is \emph{unramified} if $e(P|\bar{P})=1$, otherwise it is \emph{ramified}.  If $P|\bar{P}$ is ramified then is either \emph{wildly} or \emph{tamely} ramified according as $p$ divides $e(P|\bar{P})$ or not. Furthermore,  $\bar{P}$ is \emph{ramified in} $L|L^G$ if $P|\bar{P}$ is ramified for at least one place $P$ of $L$, otherwise $\bar{P}$ is \emph{unramified in} $L|L^G$, and the adjective wild or tame is used for $\bar{P}$ according as at least one or none of the places $P$ of $L$ lying over $\bar{P}$ is wild or tame.
Also, a place $\bar{P}$ of $L^G$ is \emph{totally ramified} in $L|L^G$ if there is just one extension $P$ of $\bar{P}$ in $L$, and if this occurs then $e(P|\bar{P})=|G|$. Moreover, $L|L^G$ is an \emph{unramified extension} if no extension of $\bar{P}$ to $L$ is ramified; otherwise $L|L^G$ is an \emph{unramified extension}. If each extension  $P|\bar{P}$ is tame then $L|L^G$ is a \emph{tame Galois extension}; otherwise it is a \emph{wild Galois extension}.

On the set $\mathcal{P}$ of all places of $L$, $G$ has a faithful action. For $P\in \mathcal{P}$, the \emph{stabilizer} $G_P$ of $P$ in $G$ is the subgroup of $G$ consisting of all elements of $G$ fixing $P$. A necessary and sufficient condition for a place $P\in \mathcal{P}$ to be ramified is $|G_P|>1$, the ramification index $e_P$ being equal to $|G_P|$. The \emph{$G$-orbit} of $P\in \mathcal{P}$ consists of the images of $P$ under the action of $G$ on $\mathcal{P}$, and it is a \emph{long} or \emph{short} orbit according as $G_P$ is trivial or not. If $o$ is a $G$-orbit then $|o|=|G|/|G_P|$ for any
place $P\in o$. If no $G$-orbit is short then no nontrivial element in $G$ fixes a place in $\cP$, that is, $L|L^G$ is an unramified Galois extension, and the converse also holds.

Assume now that $L$ is neither rational nor elliptic. Then $L$ has genus $\gg(L)\geq 2$, and $G$ is finite with a finite number of short orbits on $\mathcal{P}$. For an integer $i\geq -1$, the $i$-th ramification group $G_P^{(i)}$ of the extension $P|\bar{P}$ is defined to be
$${\mbox{$G_P^{(i)}=\{g \in G \mid \ord_P(g(z)-z)\geq i+1,$ for  all $z \in O_{P} \}$,}} $$
where $O_{P}$ is the local ring at $P$ in $L$. These ramification groups are normal subgroups of $G_P$ and they form a decreasing chain $G_P=G_P^{(0)}\geq G_P^{(1)}\geq \cdots \geq \{1\}$. Here $G_P^{(0)}=G_P$ whereas  $G_P^{(1)}$ is the (unique) Sylow $p$-subgroup of $G_P,$ and $G_P=G_P^{(1)}\rtimes C$ where
the complement $C$ in the semidirect product $G_P^{(1)}\rtimes C$ is cyclic.
The Hurwitz genus formula states that
\begin{equation}
    \label{eq1}
2\gg(L)-2=|G|(2\gg(L^G)-2)+\sum_{P\in \mathbb{P}_{L}} d_P.
    \end{equation}
where $\gg(L^G)$ is the genus of $L^G$, and
\begin{equation}
\label{eq1bis}
d_P= \sum_{i\geq 0}(|G_P^{(i)}|-1).
\end{equation}

Let $\gamma(L)$ denote the $p$-rank (equivalently, the Hasse-Witt invariant of $L$). If $S$ is a $p$-subgroup of $\aut_{\mathbb{K}}(L)$ then
the Deuring-Shafarevich formula, see \cite{sullivan1975} or \cite[Theorem 11,62]{HKT}, states that
\begin{equation}
    \label{eq2deuring}
\gamma-1={|S|}(\bar{\gamma}-1)+\sum_{i=1}^k (|S|-\ell_i),
    \end{equation}
where $\gamma(L^S)$ is the $p$-rank of $L^S$ and $\ell_1,\ldots,\ell_k$ denote the sizes of the short orbits of $S$. Both the Hurwitz and Deuring-Shafarevich formulas hold true for rational and elliptic curves provided that $G$ is a finite subgroup.

A subgroup of $\aut_\mathbb{K}(L)$ is a \emph{$p'$-group} (or \emph{a prime to $p$}) group if its order is prime to $p$. A subgroup $G$ of $\aut_{\mathbb{K}}(L)$ is \emph{tame} if the $1$-point stabilizer of any point in $G$ is $p'$-group. Otherwise, $G$ is \emph{non-tame} (or \emph{wild}). Every $p'$-subgroup of $\aut_{\mathbb{K}}(L)$ is tame, but the converse is not always true. If $G$ is tame then the classical Hurwitz bound $|G|\leq 84(\gg(L)-1)$ holds, but for non-tame groups this is far from being true. The Stichtenoth bound $|G|\leq 16 \gg(L)^4$ holds for any $L$ with $\gg(L)\geq 2$  other than the Hermitian function field.

From Group Theory, we use the following three deep results, see \cite{mv1982,gw1,gw2}.

\begin{lemma}[Dickson's classification of finite subgroups of the projective linear group $\PGL(2,\mathbb{K})$]
\label{lem30oct2016}
The finite subgroups of the group $\PGL(2,\mathbb{K})$ are isomorphic to one of the following groups:
\begin{enumerate}
\item[\rm(i)] prime to $p$ cyclic groups;
\item[\rm(ii)] elementary abelian $p$-groups;
\item[\rm(iii)] prime to $p$ dihedral groups;
\item[\rm(iv)] the alternating group $\bA_4$;
\item[\rm(v)] the symmetric group $\bS_4$;
\item[\rm(vi)] the alternating group $\bA_5$;
\item[\rm(vii)] the semidirect product of an elementary abelian
$p$-group of order $p^h$ by a cyclic group of order $n>1$ with
 $n\mid(q-1);$
\item[\rm(viii)] $\PSL(2,p^f)$;
\item[\rm(ix)] $\PGL(2,p^f)$.
\end{enumerate}
\end{lemma}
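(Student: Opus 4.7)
The plan is to identify $G$ by analyzing its induced action on the projective line $\mathbb{P}^1(\mathbb{K})$, separating the tame and wild regimes. First I would show that any finite $p$-subgroup $U\le\PGL(2,\mathbb{K})$ fixes a point of $\mathbb{P}^1(\mathbb{K})$: a non-trivial element of $U$ has a unipotent matrix representative (any other Jordan form would force an element of order prime to $p$), and by conjugating into upper-triangular unipotent form one simultaneously conjugates every element of $U$ into the additive subgroup $(\mathbb{K},+)$ of $\PGL(2,\mathbb{K})$. Hence $U$ is elementary abelian, which is case (ii).

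Next, assume $\gcd(|G|,p)=1$. The Hurwitz formula applied to the tame Galois cover $\mathbb{P}^1\to\mathbb{P}^1/G$ yields
\[
2-\frac{2}{|G|}=\sum_{i}\left(1-\frac{1}{e_i}\right),
\]
where the $e_i\ge 2$ are the orders of the point stabilizers on the short orbits. The integer solutions with right-hand side strictly less than $2$ are exactly the classical Platonic signatures $(n,n)$, $(2,2,n)$, $(2,3,3)$, $(2,3,4)$ and $(2,3,5)$, realized (uniquely up to $\PGL(2,\mathbb{K})$-conjugacy) by a cyclic, dihedral, $\bA_4$, $\bS_4$, or $\bA_5$ subgroup, recovering cases (i) and (iii)--(vi). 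Uniqueness of the faithful $\PGL(2)$-action on $\mathbb{P}^1$ for each such abstract group is the crucial rigidity input.

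Finally, suppose $p\mid|G|$ and let $U$ be a Sylow $p$-subgroup fixing $P_0\in\mathbb{P}^1(\mathbb{K})$. The stabilizer $B=G_{P_0}$ has the Borel-type structure $B=U\rtimes C$ with $C$ cyclic acting on $U$ by multiplicative characters; if $B=G$ this already yields case (vii). Otherwise $G$ contains a second Sylow $p$-subgroup $wUw^{-1}$ for some $w\in G\setminus B$, and $G$ admits a Bruhat-style decomposition $G=B\sqcup BwB$. I would let $R\subseteq\mathbb{K}$ be the additive subgroup generated by the translation parameters of $U$ together with the relevant coordinates of $w$ in a normalized basis; computing commutators between $U$ and $wUw^{-1}$ should show that $R$ is multiplicatively closed, hence a finite subfield $\mathbb{F}_{p^f}$, and that $G$ is sandwiched between $\PSL(2,\mathbb{F}_{p^f})$ and $\PGL(2,\mathbb{F}_{p^f})$, yielding (viii) or (ix). The hard step is precisely this subfield-recognition together with ruling out intermediate subgroups $\PSL\lneq G\lneq\PGL$ that are not of the stated form; this is the BN-pair / root-subgroup analysis of rank-one Lie-type groups and is the main obstacle in any self-contained proof of Dickson's theorem, which is why the statement is invoked here as a black box from \cite{mv1982,gw1,gw2}.
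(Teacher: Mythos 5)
The paper does not prove this lemma at all: it is stated in Section~\ref{sec2} as one of three ``deep results'' imported from the literature (Dickson's Hauptsatz, via \cite{mv1982}), so there is no in-paper argument to compare yours against. Judged on its own, your outline is the standard proof strategy and the first two regimes are essentially sound. For the $p$-group case, note that the claim that one conjugation simultaneously puts all of $U$ in unipotent upper-triangular form needs the intermediate observation that all elements of $U$ share a common fixed point; the quickest route is that a nontrivial central element $z$ of $U$ fixes a unique point of $\mathbb{P}^1(\mathbb{K})$ (its representative is unipotent because $\mathbb{K}^*$ has no $p$-torsion), and every element of $U$ commutes with $z$, hence permutes, hence fixes, that point. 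For the tame case, the Riemann--Hurwitz signature count is correct, but the passage from a signature such as $(2,2,n)$ or $(2,3,5)$ to the isomorphism type of $G$ (dihedral, $\bA_5$, \dots) is a nontrivial reconstruction step that you assert rather than prove; it uses that tame stabilizers are cyclic and an analysis of how the three short orbits interact.

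The genuine gap is the final case, and you have named it yourself: when $G$ contains at least two Sylow $p$-subgroups, the recognition of a subfield $\mathbb{F}_{p^f}\subseteq\mathbb{K}$ from the translation parameters of the unipotent root subgroups, and the conclusion that $\PSL(2,p^f)\le G\le\PGL(2,p^f)$, is the entire content of the hard half of Dickson's theorem. Your proposal reduces the lemma to exactly this step and then defers it, so what you have written is a correct reduction plus a plan, not a proof. Since the paper itself treats the lemma as a citation, this is an acceptable stance for the purposes of the paper, but it should be presented as such (as you do in your last sentence) rather than as a proof sketch that could be completed in a few lines. One small further point: in case (vii) the divisibility condition $n\mid(q-1)$ should be read as $n\mid(p^h-1)$; it follows in your Borel-case analysis from the fact that $U$ is an $\mathbb{F}_p[\zeta]$-module for $\zeta$ a primitive $n$-th root of unity, which forces $\mathbb{F}_{p^d}=\mathbb{F}_p[\zeta]$ with $d\mid h$ and hence $n\mid p^d-1\mid p^h-1$; this is worth spelling out if you keep the sketch.
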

\begin{lemma}[Feith-Thompson theorem]
\label{lemma30Aoct2016} Every finite group of odd order is solvable.
\end{lemma}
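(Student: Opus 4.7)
The statement is the celebrated Feit--Thompson odd-order theorem, whose only known proof is the 255-page original paper of Feit and Thompson (and its subsequent streamlinings); in the present article it is quoted from the group-theoretic references as a black box and not reproved. Still, the high-level strategy one would follow can be described as follows.

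The plan is a minimal-counterexample argument. First I would assume that some finite group of odd order is non-solvable, and let $G$ be one of smallest order with this property. Since every subgroup and every quotient of a solvable group is solvable, minimality forces $G$ to be a non-abelian simple group in which every proper subgroup is solvable. The task is then to derive a contradiction from the existence of such a $G$ of odd order.

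The bulk of the argument splits into three large blocks. The first is a $p$-local analysis: one studies normalizers and centralizers of nontrivial $p$-subgroups for the odd primes $p$ dividing $|G|$, and, via Thompson's uniqueness theorem and the surrounding classification of minimal simple groups of the appropriate type, cuts the maximal subgroups of $G$ down to a short list of admissible configurations with very restricted Sylow and Fitting structure. The second block is character-theoretic: one applies the Brauer--Suzuki theory of exceptional characters together with Feit's coherence method to extract from those configurations enough of the character table of $G$ to pin down degrees and values of certain irreducible characters on particular $p$-elements. The third block is a generators-and-relations computation which, combining the data from the first two blocks, produces a word in an explicit set of generators that must simultaneously equal $1$ and be nontrivial in $G$, yielding the desired contradiction.

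The main obstacle, and what makes the full proof so long, is really the first block together with the coherence step in the second: one has to run intricate inductions on the $p$-local subgroups and on the number of conjugacy classes, and at every stage eliminate a large number of parasitic configurations before the character-theoretic machinery can be brought to bear. For the purposes of the present paper all of this is absorbed into the cited group-theoretic literature, and the lemma is invoked without further justification.
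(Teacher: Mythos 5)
Your proposal matches the paper's treatment: the paper states this result (Lemma \ref{lemma30Aoct2016}) as one of three deep group-theoretic facts imported from the literature and gives no proof of it, exactly as you say. Your supplementary outline of the Feit--Thompson minimal-counterexample argument (local analysis, coherence/exceptional characters, and the final generators-and-relations contradiction) is an accurate high-level description of the standard proof, but nothing beyond the citation is needed or used in this paper.
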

\begin{lemma}[Alperin-Gorenstein-Walter theorem]
\label{lemma30oct2016} If $\Gamma$ is a finite simple group of $2$-rank two (i.e. $\Gamma$ contains no elementary abelian subgroup of order $8$), then one of the following holds:
\begin{itemize}
\item[\rm(i)] The Sylow $2$-subgroups of $\Gamma$ are dihedral, and $\Gamma$ is isomorphic to either $\PSL(2,n)$ with an odd prime power $n\ge 5$, or to the alternating group $\bA_7$.
\item[\rm(ii)] The Sylow $2$-subgroups of $\Gamma$ are semi-dihedral and $\Gamma$ is isomorphic to either $\PSL(3,n)$ with an odd prime power $n\equiv -1 \pmod 4$, or to $\PSU(3,n), n\equiv 1 \pmod 4$, or to the Mathieu group $M_{11}$.
\item[\rm(iii)] The Sylow $2$-subgroups of $\Gamma$ are wreathed,  and $\Gamma$ is isomorphic to either $\PSL(3,n)$ with an odd prime power $n\equiv 1 \pmod 4$, or to $\PSU(3,n), n\equiv -1 \pmod 4$, or to $\PSU(3,4)$.
\item[\rm(iv)] $\Gamma$ isomorphic to $\PSU(3,4)$.
\end{itemize}
\end{lemma}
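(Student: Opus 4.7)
The plan is to prove this classification by a combination of local group theory and character-theoretic fusion arguments, following the Brauer school. The starting point is to analyze the structure of a Sylow $2$-subgroup $S$ of $\Gamma$, since the hypothesis on the $2$-rank is intrinsically a hypothesis on $S$ itself. A first task is therefore to classify all finite $2$-groups of $2$-rank at most $2$: by a structural theorem (essentially due to MacWilliams, refining classical results of P.\ Hall on groups of normalizer type), such a $2$-group must be cyclic, generalized quaternion, dihedral, semi-dihedral, or of wreathed type $C_{2^n}\wr C_2$. Each of these five possibilities will eventually correspond to one of the cases listed in the statement, so once the list of possible $S$ is in hand the problem splits into five essentially independent problems.

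Next I would dispose of the two ``degenerate'' shapes. Cyclic Sylow $2$-subgroups are ruled out by Burnside's normal $p$-complement theorem: a finite simple group cannot have a nontrivial cyclic Sylow $2$-subgroup (of order at least $4$), and an order-$2$ case does not fit into the statement. Generalized quaternion Sylow $2$-subgroups are ruled out by the Brauer--Suzuki theorem, which forces the center of $\Gamma$ to contain an involution, contradicting simplicity. This reduces the problem to three remaining shapes: dihedral, semi-dihedral, and wreathed.

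For each of the three remaining shapes the proof strategy is the same in spirit, but the execution is what makes the theorem genuinely deep. One fixes $S$ in $\Gamma$, determines all possible fusion patterns of involutions and of elements of order $4$ in $S$ using Alperin's fusion theorem, and then studies the centralizers $C_\Gamma(t)$ of involutions $t\in S$ using Thompson's $p$-local methods together with Brauer's theory of blocks with cyclic or small defect groups. From the shape of these centralizers one recovers enough numerical information to identify $\Gamma$ with a known simple group: the dihedral case yields the Gorenstein--Walter theorem, giving $\PSL(2,n)$ with $n$ odd and $\bA_7$; the semi-dihedral case yields the Alperin--Brauer--Gorenstein theorem, giving $\PSL(3,n)$ with $n\equiv -1\pmod 4$, $\PSU(3,n)$ with $n\equiv 1\pmod 4$, and $M_{11}$; and the wreathed case yields the remaining families $\PSL(3,n)$ with $n\equiv 1\pmod 4$ and $\PSU(3,n)$ with $n\equiv -1\pmod 4$, together with the small exceptional $\PSU(3,4)$.

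The main obstacle, by a wide margin, is the fusion/character-theoretic identification step inside each of the dihedral, semi-dihedral, and wreathed cases. Classifying the $2$-groups of $2$-rank $\leq 2$ is essentially elementary, and eliminating cyclic and generalized quaternion Sylow subgroups rests on classical theorems that one may quote as black boxes; but pinning down $\Gamma$ from the isomorphism type of $S$ and the fusion of its involutions is where the bulk of the original Gorenstein--Walter and Alperin--Brauer--Gorenstein papers lies, relying on a careful interplay between Thompson factorization, Brauer's second main theorem applied to blocks of small defect, and detailed control of the $2$-local subgroups. In a self-contained write-up I would quote these two theorems as the core ingredients and verify only that their hypotheses are met, rather than attempting to redevelop the fusion analysis from scratch.
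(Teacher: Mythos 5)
The paper does not actually prove this lemma: it is imported verbatim from the literature (the citations point to Gorenstein--Walter for the dihedral case, and the bibliography also carries the Alperin--Brauer--Gorenstein papers for the full $2$-rank two classification), so the only meaningful comparison is between your outline and the published proofs. Your overall architecture --- reduce everything to the isomorphism type of a Sylow $2$-subgroup $S$, eliminate the cyclic case by Burnside's normal $p$-complement theorem and the generalized quaternion case by Brauer--Suzuki, then quote Gorenstein--Walter and Alperin--Brauer--Gorenstein for the three surviving shapes --- is the right one in spirit, and your assessment of where the genuine depth lies (fusion of involutions, centralizer structure, block theory) is accurate.

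However, your first step is false as stated, and the error is not cosmetic. There is no classification of finite $2$-groups of $2$-rank at most two into the five types cyclic, generalized quaternion, dihedral, semi-dihedral, wreathed: $C_4\times C_4$ already has $2$-rank two and is none of these, and --- decisively --- the Sylow $2$-subgroups of $\PSU(3,4)$ (a Suzuki $2$-group of order $64$ and rank two) are also none of these, which is exactly why the statement needs the separate case (iv). What is true (MacWilliams) is a structural restriction on $2$-groups with no \emph{normal} elementary abelian subgroup of order $8$; forcing the Sylow $2$-subgroup of a \emph{simple} group of $2$-rank two to be dihedral, semi-dihedral, wreathed, or of the $\PSU(3,4)$ type is itself a substantial part of the Alperin--Brauer--Gorenstein work and requires transfer and fusion arguments inside $\Gamma$, not standalone $2$-group theory. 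So your reduction step must be replaced by a citation of that reduction (or of the full theorem), at which point the proof becomes, like the paper's, an appeal to the literature.
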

From now on, $\mathbb{K}$ is the algebraic closure of a finite field $\mathbb{F}_q$ of odd order $q=p^h$ with $h\geq 1$, $m\geq 1$ is an integer prime to $p$,  $F=\mathbb{K}(x,y)$ is the transcendency degree $1$ function field defined by  $y^q+y=x^m+x^{-m}$, $t=x^{m(q-1)}$ and $H$ is the rational subfield $\mathbb{K}(t)$ of $F$.

\section{Galois closure of $F|H$}
Let $F$ and $H$ be as defined in Section \ref{intr}.
Our first step is to give an explicit presentation of the Galois closure of $F|H$.
\begin{proposition}
\label{pro1}
The Galois closure of $F|H$ is $\mathbb{K}(x,y,z)$ with
\begin{equation}
\label{eq2} y^q+y=x^m+\frac{1}{x^m},
\end{equation}
\begin{equation}
\label{eq3} z^q+z=x^m.
\end{equation}
\end{proposition}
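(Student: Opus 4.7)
The plan is to define $K := \mathbb{K}(x,y,z)$ via relations (\ref{eq2}) and (\ref{eq3}), establish that $K|H$ is Galois of degree $q^2 m(q-1)$, and then show by a normal-core argument that no smaller intermediate field containing $F$ is Galois over $H$.

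Setting $w := y - z$, subtracting the relations gives $w^q + w = x^{-m}$, so $K=\mathbb{K}(x,z,w)$ as well. Over $\mathbb{K}(x)$, both $\mathbb{K}(x,z)$ and $\mathbb{K}(x,w)$ are Artin-Schreier extensions of degree $q$; their ramification loci in $\mathbb{P}^1_\mathbb{K}$ are $\{\infty\}$ and $\{0\}$ respectively, because $x^{\pm m}$ has a unique pole of order $m$ coprime to $p$. The disjointness of these loci forces linear disjointness over $\mathbb{K}(x)$, so $[K:\mathbb{K}(x)] = q^2$ and $[K:H] = q^2 m(q-1)$.

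To realize $K|H$ as Galois, I exhibit a Galois group of the correct order. The $p$-group $Q$ of Artin-Schreier shifts $\tau_{\alpha,\beta} : z \mapsto z+\alpha,\; w \mapsto w + \beta$ has order $q^2$. Picking a primitive $m(q-1)$-th root of unity $\zeta$ and setting $\omega := \zeta^m \in \mathbb{F}_q^*$, the substitution $\sigma: x \mapsto \zeta x,\; z \mapsto \omega z,\; w \mapsto \omega^{-1} w$ preserves both defining relations (using $\omega^q = \omega$) and restricts to the generator of $\mathrm{Gal}(\mathbb{K}(x)|H)$. Together $Q$ and $\sigma$ generate a subgroup $G$ of order at least $q^2 m(q-1) = [K:H]$ fixing $H$, so $K|H$ is Galois with $G = Q \rtimes \langle \sigma \rangle$.

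For the closure step, since $y = z+w$ one has $\mathrm{Gal}(K|F) = \{\tau_{\alpha,-\alpha} : \alpha\in \mathbb{F}_q\}$, and the Galois closure of $F|H$ inside $K$ is the fixed field of the normal core of this subgroup in $G$. A direct conjugation gives $\sigma \tau_{\alpha,-\alpha} \sigma^{-1} = \tau_{\omega^{-1}\alpha,\,-\omega\alpha}$, which lies in $\mathrm{Gal}(K|F)$ only if $(\omega-\omega^{-1})\alpha = 0$; choosing $\omega$ of order exceeding $2$ in $\mathbb{F}_q^*$ forces $\alpha = 0$, so the core is trivial and $K$ is the Galois closure. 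The main obstacle is precisely this last step, which relies on $\mathbb{F}_q^*$ containing an element of order greater than $2$, an assumption that holds in all relevant cases.
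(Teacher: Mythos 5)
Your construction of $K$, the degree count $[K:H]=q^2m(q-1)$ via the disjoint ramification loci of the two Artin--Schreier layers, and the exhibition of a group $Q\rtimes\langle\sigma\rangle$ of order $[K:H]$ fixing $H$ all run parallel to the paper's own argument (the paper writes the second layer as $s=z-y$ with $s^q+s=1/(z^q+z)$ and calls the resulting group $\Phi$); your linear-disjointness argument is a clean justification of $[K:\mathbb{K}(x)]=q^2$, which the paper asserts without comment. You then go further than the paper by checking minimality through the normal core of $\mathrm{Gal}(K|F)$ in $\mathrm{Gal}(K|H)$; the paper omits this step entirely, in effect leaning on the unpublished lower bound for the order of the Galois group attributed to Stichtenoth. (A small slip: since $p$ is odd, $\mathrm{Gal}(K|F)=\{\tau_{\alpha,-\alpha}:\alpha^q+\alpha=0\}$, and the kernel of $T\mapsto T^q+T$ is a trace-zero subgroup of $\mathbb{F}_{q^2}$, not $\mathbb{F}_q$; this does not affect the computation.)

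The genuine problem is your closing sentence. The element $\omega=\zeta^m$ necessarily generates $\mathbb{F}_q^*$, so it has order $q-1$, and the condition that $\omega$ have order exceeding $2$ fails precisely when $q=3$ --- a value the paper explicitly allows (several later lemmas treat $q=3$ separately). For $q=3$ one has $\omega=-1=\omega^{-1}$, so $(\omega-\omega^{-1})\alpha=0$ for every $\alpha$, the conjugate $\tau_{\omega^{-1}\alpha,\,-\omega\alpha}$ always lies back in $\mathrm{Gal}(K|F)$, and the core is all of $\mathrm{Gal}(K|F)$. This is not an artifact of your particular choice of group: for $q=3$ the substitution $(x,y)\mapsto(\zeta x,-y)$ is an automorphism of $F$ fixing $t=x^{2m}$, and together with the Artin--Schreier translations it generates a group of order $3\cdot 2m=[F:H]$, so $F|H$ is itself Galois and its Galois closure is $F$, not $K$ (the two fields even have different genera). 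Thus the proposition as stated fails for $q=3$, your proof is complete only under the additional hypothesis $q>3$, and the phrase ``holds in all relevant cases'' conceals an actual counterexample rather than a removable technicality.
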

\begin{proof} Let $K$ denote the function field $\mathbb{K}(x,y,z)$ given by (\ref{eq2}) and (\ref{eq3}). We show first that $K$ contains a subfield isomorphic to an Artin-Mumford function field. For this, let $s=z-y$. Then (\ref{eq2}) reads
$$s^q+s=y^q+y-(z^q+z)=\frac{1}{x^m},$$ whence by (\ref{eq3})
\begin{equation}
\label{eq4} s^q+s=\frac{1}{z^q+z}.
\end{equation}
The function field $L=\mathbb{K}(x,s,z)$ with (\ref{eq3}) and (\ref{eq4}) is a subfield of $K$. Actually, $K=L$ as $y=z-s$, and $AM=\mathbb{K}(s,z)$ with (\ref{eq4}) is an Artin-Mumford subfield of $K$. Also,
$$[L:H]=[K:H]= [K:F]\,[F:H]=q^2m(q-1).$$
It remains to show that $\aut(L)$ has a subgroup of order $q^2m(q-1)$ fixing $t$. Take a positive integer $r$ for which $m|(q^r-1)$. Let $\mathcal{V}$ be the subgroup of $\mathbb{F}_{q^r}^*$ consisting of all elements $v$ such that $v^m\in \mathbb{F}_q^*$. Obviously, $\mathcal{V}$ is a cyclic group of order $(q-1)m$.

For $\alpha,\beta \in \mathbb{F}_{q^2}$ with ${\rm{Tr}}(\alpha)=\alpha^q+\alpha=0,\,{\rm{Tr}}(\beta)=\beta^q+\beta=0$, and $v\in \mathcal{V}$, let $\varphi_ {\alpha,\beta,v}(x,s,z)$ denote the $\mathbb{K}$-automorphism of $K$
\begin{equation}
\varphi_ {\alpha,\beta,v}(x,s,z)=(vx,v^{-m}s+\alpha,v^m z+\beta).
\end{equation}
Then $\varphi_ {\alpha,\beta,v}(s)^q+\varphi_ {\alpha,\beta,v}(s)=v^{-m}(s^q+s)$, and $\varphi_ {\alpha,\beta,v}(z)^q+\varphi_ {\alpha,\beta,v}(z)=v^m(z^q+z)$. This shows that
(\ref{eq4}) is left invariant by $\varphi_ {\alpha,\beta,v}(x,s,z)$. Furthermore, $\varphi_ {\alpha,\beta,v}(x)^m=v^m x^m.$ Let $$\Phi:=\{\varphi_ {\alpha,\beta,v}|\,v\in \mathcal{V}, \alpha^q+\alpha=0,\beta^q+\beta=0\}.$$ A straightforward computation shows that
$$\varphi_{\alpha,\beta,v}\circ\varphi_{\alpha',\beta',v'}=\varphi_{v^{-m}\alpha'+\alpha,v^m\beta'+\beta,vv'}.$$
and hence $\Phi$ is a subgroup of $\aut_\mathbb{K}(L)$ of order $q^2m(q-1)$. Furthermore,
$$\varphi_{\alpha,\beta,v}(t)=\varphi_{\alpha,\beta,v}(x^{m(q-1)})={((\varphi_{\alpha,\beta,v}(x))^m)}^{q-1}=v^{m(q-1)}x^{m(q-1)}=t.$$
Since 
$[L:H]=[K:H]=q^2m(q-1)$, the claim follows.
\end{proof}
Our proof of Proposition \ref{pro1} also gives the following result.
\begin{lemma}
\label{lem16Aluglio2016}
The Galois group of the Galois closure $K$ of $F|H$ is $\Phi$.
\end{lemma}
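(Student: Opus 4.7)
The plan is to recognise that this lemma is essentially a restatement of what has already been established in the proof of Proposition \ref{pro1}; no new computation is required. Concretely, the proof of Proposition \ref{pro1} produces two pieces of information: first, the degree $[K:H]=q^{2}m(q-1)$ (obtained from $[K:H]=[K:F]\cdot[F:H]$), and second, an explicit subgroup $\Phi\le\aut_{\mathbb{K}}(K)$ of order $q^{2}m(q-1)$ whose elements all send $t=x^{m(q-1)}$ to itself, and hence fix $H=\mathbb{K}(t)$ pointwise.

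The key step I would carry out is the following trivial but decisive counting argument. Because every $\varphi_{\alpha,\beta,v}\in\Phi$ fixes $H$ elementwise, we have an inclusion $\Phi\subseteq\aut_{\mathbb{K}}(K|H)$. On the other hand, for any field extension $K|H$ the order of the automorphism group $\aut_{\mathbb{K}}(K|H)$ is at most $[K:H]$, with equality precisely when the extension is Galois (in which case $\aut_{\mathbb{K}}(K|H)=\gal(K|H)$). Combining the chain
\[
q^{2}m(q-1)=|\Phi|\le|\aut_{\mathbb{K}}(K|H)|\le[K:H]=q^{2}m(q-1),
\]
all three quantities coincide, so the extension $K|H$ is Galois and $\gal(K|H)=\Phi$.

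Finally, since Proposition \ref{pro1} identifies $K$ with the Galois closure of $F|H$, the equality $\gal(K|H)=\Phi$ is exactly the content of the lemma. There is really no obstacle here: the entire argument is a consequence of the explicit construction of $\Phi$ and the degree computation already recorded in the proof of Proposition \ref{pro1}, together with the standard fact that a group of automorphisms of $K$ whose order equals $[K:H]$ and whose members fix $H$ pointwise must be the full Galois group of a Galois extension.
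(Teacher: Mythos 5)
Your proposal is correct and matches the paper's intent exactly: the paper offers no separate proof, stating only that Lemma \ref{lem16Aluglio2016} follows from the proof of Proposition \ref{pro1}, and the counting argument you spell out (comparing $|\Phi|=q^{2}m(q-1)$ with $[K:H]$ and invoking the standard bound $|\aut_{\mathbb K}(K|H)|\le[K:H]$ with equality iff Galois) is precisely the implicit reasoning. Nothing is missing.
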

\section{Some subgroups of $ \aut_{\mathbb{K}}(K)$} From Lemma \ref{lem16Aluglio2016},
$\Phi$ is a subgroup of $\aut_{\mathbb{K}}(K)$ of order $q^2m(q-1)$.
Actually, $\aut_{\mathbb{K}}(K)$ is larger than $\Phi$.
\begin{lemma}
\label{lem16Bluglio2016}
$|\aut_{\mathbb{K}}(K)| \geq 2q^2m(q-1)$.
\end{lemma}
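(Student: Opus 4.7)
The plan is to exhibit an involutory $\mathbb{K}$-automorphism $\tau$ of $K$ that does not lie in $\Phi$; once such a $\tau$ is produced, the cosets $\Phi$ and $\tau\Phi$ are disjoint subsets of $\aut_{\mathbb{K}}(K)$, which immediately gives $|\aut_{\mathbb{K}}(K)|\geq 2|\Phi|=2q^2m(q-1)$.

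The candidate for $\tau$ is suggested by the evident symmetry $x^m\leftrightarrow x^{-m}$ of equation (\ref{eq2}): on $F=\mathbb{K}(x,y)$ the map $x\mapsto 1/x$, $y\mapsto y$ manifestly preserves the defining relation. The only real task is to extend this map to the full Galois closure $K=\mathbb{K}(x,y,z)$. Applying the inversion $x\mapsto 1/x$ to (\ref{eq3}) demands that $\tau(z)$ satisfy $w^q+w=x^{-m}$; but in $K$ the element $y-z$ already does so, since
\[
(y-z)^q+(y-z)=(y^q+y)-(z^q+z)=(x^m+x^{-m})-x^m=x^{-m}.
\]
Thus the natural (and essentially forced) choice is
\[
\tau\colon\ x\longmapsto \frac{1}{x},\qquad y\longmapsto y,\qquad z\longmapsto y-z.
\]

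With this definition I would verify that $\tau$ preserves (\ref{eq2}) by inspection, preserves (\ref{eq3}) by the display above, and is an involution via $\tau^2(z)=\tau(y)-\tau(z)=y-(y-z)=z$. Finally, $\tau\notin\Phi$ is immediate because every $\varphi_{\alpha,\beta,v}\in\Phi$ acts on $x$ by multiplication by some $v\in\mathcal{V}\subset\mathbb{K}^*$, whereas $\tau$ inverts $x$. I do not foresee any genuine obstacle; the only mildly nontrivial step is the identification of the correct image $\tau(z)=y-z$, which is forced by matching the Artin-Schreier relations on the two sides of $K/F$, and everything else is one-line bookkeeping.
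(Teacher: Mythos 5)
Your proof is correct and is essentially the paper's own argument: your $\tau$ is exactly the paper's involution $\xi\colon(x,s,z)\mapsto(1/x,z,s)$ rewritten in the coordinates $(x,y,z)$ (since $s=y-z$, your assignment $z\mapsto y-z$, $y\mapsto y$ is precisely $z\mapsto s$, $s\mapsto z$). The only cosmetic difference is that you count via the two disjoint cosets $\Phi$ and $\tau\Phi$, whereas the paper checks $\xi\varphi_{\alpha,\beta,v}\xi=\varphi_{\beta,\alpha,v^{-1}}$ and counts $|\Phi\rtimes\langle\xi\rangle|$; both yield the bound, and the normalizer computation is only needed later.
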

\begin{proof} Let
$$ \xi:\,(x,s,z)\mapsto \bigg(\frac{1}{x},z,s \bigg).$$
By a straightforward computation, $\xi$ $\in \aut_{\mathbb{K}}(F)$, and $\xi\not\in \Phi$ is an involution. Since $\xi \varphi_{\alpha,\beta,v} \xi = \varphi_{\beta,\alpha,v^{-1}}$ for every $\varphi_{\alpha,\beta,v} \in \Phi$, the normalizer of $\Phi$ contains $\xi$. Thus, $|\langle \Phi, \xi \rangle|=2q^2m(q-1)$ by Lemma \ref{lem16Aluglio2016}.
\end{proof}
From the proof of Lemma \ref{lem16Bluglio2016}, $G=\Phi\rtimes \langle \xi \rangle$ is a subgroup of $ \aut_{\mathbb{K}}(K)$. Our main goal is to prove that $G=\aut_{\mathbb{K}}(K)$. The proof needs several results on the structure of $\aut_{\mathbb{K}}(K)$ which are stated and proven below. For this purpose, the following subgroups of $\aut_{\mathbb{K}}(K)$ are useful.
\begin{enumerate}
\item[(i)] $\Psi:=\{\varphi_{\alpha,\alpha,1}|\alpha^q+\alpha=0\}$ of order $q$.
\item[(ii)] $\Delta := \{ \varphi_{\alpha,\beta,1} \mid \alpha^q+\alpha=\beta^q+\beta=0\}$ of order $q^2$.
\item[(iii)] $W :=\{\varphi_{0,0,v} \mid v^m =1 \}$ of order $m$.
\item[(iv)] $V :=\{\varphi_{0,0,v} \mid v \in \mathcal{V}\} $ of order $(q-1)m$.
\item[(v)] $M:=\{\varphi_{\alpha,\beta,v} \in \Phi \mid v^m=1 \}$.
\end{enumerate}
Obviously, both $\Delta$ and $\Psi$ are elementary abelian $p$-groups while both $V$ and $W$ are prime to $p$ cyclic groups.
\begin{proposition}
\label{pro2}  $K|F$ is an unramified Galois extension of degree $q$. Furthermore, $\gg(K) = (q-1)(qm-1)$ and $\gm(K)=(q-1)^2$.
\end{proposition}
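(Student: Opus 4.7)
The plan is to realize $K|F$ as an unramified Artin--Schreier extension of degree $q$ and then read off the genus and $p$-rank from the Hurwitz and Deuring--Shafarevich formulas. From $[K:H]=[K:F][F:H]$ together with $[K:H]=q^2m(q-1)$ (Proposition~\ref{pro1}) and $[F:H]=qm(q-1)$ we obtain $[K:F]=q$, so the defining equation $z^q+z=x^m$ realizes $K|F$ as an Artin--Schreier extension of degree exactly $q$. In particular $K|F$ is Galois, with Galois group the elementary abelian $p$-group $\Psi$ generated by the translations $z\mapsto z+\alpha$ with $\alpha^q+\alpha=0$.

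The key geometric point is to show that $K|F$ is unramified. For a place $P$ of $F$ with $v_P(x)\ge 0$ the right-hand side $x^m$ has non-negative valuation at $P$, and the standard Artin--Schreier criterion gives unramifiedness at $P$. The only potentially ramified places are therefore those lying over $x=\infty$; since $v_{\infty}(x^m+x^{-m})=-m$ is coprime to $p$, the Artin--Schreier extension $F|\mathbb{K}(x)$ is totally ramified at $x=\infty$, so there is a unique such place $P_\infty$ of $F$. To handle it, I would pass to the alternative generator $s=z-y$:
\[
s^q+s=(z^q+z)-(y^q+y)=x^m-(x^m+x^{-m})=-x^{-m}.
\]
Since $F(s)=F(z)=K$ and $v_{P_\infty}(x^{-m})=mq>0$, this second Artin--Schreier presentation exhibits $P_\infty$ as unramified in $K|F$ as well.

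With $K|F$ shown to be an unramified Galois extension of degree $q$, I would first recover $\gg(F)=m(q-1)$ and $\gm(F)=q-1$ from Hurwitz and Deuring--Shafarevich applied to the Artin--Schreier extension $F|\mathbb{K}(x)$: it is totally ramified exactly at the two places $x=0$ and $x=\infty$, each contributing different exponent $(q-1)(m+1)$ and short orbit of size $1$. Hurwitz applied to the unramified extension $K|F$ then gives $2\gg(K)-2=q(2\gg(F)-2)$, whence $\gg(K)=(q-1)(qm-1)$, while Deuring--Shafarevich (no short orbits) gives $\gm(K)-1=q(\gm(F)-1)$, whence $\gm(K)=(q-1)^2$.

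The main obstacle is really just the unramifiedness at $P_\infty$: the crucial idea is that passing from the generator $z$ to $s=z-y$ converts the pole $x^m$ of order $mq$ into the zero $-x^{-m}$ of the same order, which is exactly the input needed to verify unramifiedness via the Artin--Schreier criterion. Once this substitution is found, both the genus and the $p$-rank computations reduce to a routine bookkeeping in the two Hurwitz/Deuring--Shafarevich formulas above.
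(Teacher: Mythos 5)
Your proof is correct, but it reaches the key point---unramifiedness of $K|F$---by a genuinely different route than the paper. The paper identifies $F$ as the fixed field $K^{\Psi}$ of the order-$q$ subgroup $\Psi=\{\varphi_{\alpha,\alpha,1}\}$ and then argues group-theoretically: every nontrivial $\psi\in\Psi$ acts on the Artin--Mumford subfield $AM=\mathbb{K}(s,z)$ without fixed places, hence fixes no place of $K$, so the Galois extension $K|K^{\Psi}=K|F$ is unramified. You instead work locally with the two Artin--Schreier presentations of the same extension, $z^q+z=x^m$ (good at all places with $v_P(x)\ge 0$) and $s^q+s=-x^{-m}$ with $s=z-y$ (good at the unique place over $x=\infty$), and invoke the valuation criterion; note that your sign is the correct one for $s=z-y$, the paper's $s^q+s=x^{-m}$ being a harmless slip. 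Your route is more elementary and self-contained---it also derives $\gg(F)=m(q-1)$ and $\gm(F)=q-1$ from scratch rather than quoting them---and it avoids having to analyze the branches of the Artin--Mumford curve at its singular points; the paper's route is shorter given the machinery already set up and reuses the subgroup $\Psi$ and the subfield $AM$ that recur throughout the rest of the argument. The final Hurwitz and Deuring--Shafarevich bookkeeping is identical in both.
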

\begin{proof} We show that $F=K^{\Psi}$. From $\varphi_{\alpha,\alpha,1}(x,s,z)=(x,s+\alpha,z+\alpha)$,   $$\varphi_{\alpha,\alpha,1}(y)=\varphi_{\alpha,\alpha,1}(z-s)=\varphi_{\alpha,\alpha,1}(z)-\varphi_{\alpha,\alpha,1}(s)=
z+\alpha-(s+\alpha)=z-s=y.$$ Moreover, $\varphi_{\alpha,\alpha,1}(x)=x$. Therefore, $K^\Psi$ contains $F$. Since 
$[K:F]=q$ this yields $F=K^\Psi$
whence the first claim follows. We show that no nontrivial element in $\Psi$ fixes a place of $K$. From the definition of $\Psi$, every $\psi\in \Psi$ leaves the Artin-Mumford subfield $AM=\mathbb{K}(s,z)$ invariant. By a straightforward computation, if $\psi$ is nontrivial, then it fixes no place of $AM$. But then $\psi$ fixes no place of $L$, and hence $K|F$ is unramified. Therefore, the Hurwitz genus formula and the Deuring-Shafarevich formula yield the second claim.
\end{proof}
Proposition \ref{pro2} has the following corollary.
\begin{corollary}
\label{cor1nov2016} A necessary and sufficient condition for $F$ to be ordinary, i.e. $\gg(F)=\gamma(F)$, is $m=1$.
\end{corollary}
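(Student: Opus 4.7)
The plan is to read off $\gg(F)$ and $\gamma(F)$ from Proposition \ref{pro2} by pushing down the extension $K|F$, which by that proposition is unramified Galois of degree $q$ with Galois group $\Psi$.

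First I would compute $\gg(F)$. Since $K|F$ is unramified, the Hurwitz genus formula (\ref{eq1}) reduces to
\begin{equation*}
2\gg(K)-2 = q\bigl(2\gg(F)-2\bigr).
\end{equation*}
Substituting $\gg(K)=(q-1)(qm-1)$ from Proposition \ref{pro2} and solving gives $\gg(F)=m(q-1)$. (One can independently verify this by applying Hurwitz directly to the Artin--Schreier extension $F|\mathbb{K}(x)$, where $x^m+x^{-m}$ has two poles of order $m$ coprime to $p$, each contributing $(m+1)(q-1)$ to the different.)

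Next I would compute $\gamma(F)$. Since $\Psi$ is a $p$-group of order $q$ and $K|F=K|K^\Psi$ is unramified, $\Psi$ has no short orbits on the places of $K$, so the Deuring--Shafarevich formula (\ref{eq2deuring}) reduces to
\begin{equation*}
\gamma(K)-1 = q\bigl(\gamma(F)-1\bigr).
\end{equation*}
Plugging in $\gamma(K)=(q-1)^2$ from Proposition \ref{pro2} gives $(q-1)^2-1=q^2-2q=q(\gamma(F)-1)$, whence $\gamma(F)=q-1$.

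Finally, comparing the two values, $F$ is ordinary precisely when $m(q-1)=q-1$, i.e.\ when $m=1$. No step presents a real obstacle here: the content of the corollary is essentially bookkeeping once Proposition \ref{pro2} is in hand, the key input being that $K|F$ is an unramified $p$-extension so that both Hurwitz and Deuring--Shafarevich lose their ramification contributions simultaneously.
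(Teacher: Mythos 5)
Your proof is correct and matches the paper's intended argument: the corollary is stated without proof precisely because it is the bookkeeping you carry out, transferring $\gg$ and $\gamma$ across the unramified degree-$q$ extension $K|F$ via Hurwitz and Deuring--Shafarevich and comparing $m(q-1)$ with $q-1$. (The paper implicitly runs the same computation in the direction $F\to K$ inside the proof of Proposition \ref{pro2}, but this is the identical calculation.)
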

\begin{lemma}
\label{lemB310ct2016}
$ \Delta$ is an (elementary abelian) Sylow $p$-subgroup of $\aut_{\mathbb{K}}(K)$.
\label{nak}
\end{lemma}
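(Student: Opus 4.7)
The composition law on $\Phi$ restricts to $\varphi_{\alpha,\beta,1}\circ\varphi_{\alpha',\beta',1}=\varphi_{\alpha+\alpha',\beta+\beta',1}$, so $\Delta$ is elementary abelian of order $q^2$. Each $\varphi_{\alpha,\beta,1}$ fixes $x$, hence $\mathbb{K}(x)\subseteq K^{\Delta}$, and since $[K:\mathbb{K}(x)]=[K:F][F:\mathbb{K}(x)]=q\cdot q=q^2=|\Delta|$ (using Proposition \ref{pro2} and the defining relation of $F$), in fact $K^{\Delta}=\mathbb{K}(x)$ and $K\,|\,\mathbb{K}(x)$ is Galois with group $\Delta$. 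The two Artin--Schreier subextensions $\mathbb{K}(x,z)\,|\,\mathbb{K}(x)$ and $\mathbb{K}(x,s)\,|\,\mathbb{K}(x)$ of $K$, arising from $z^q+z=x^m$ and $s^q+s=x^{-m}$, are totally ramified only at $x=\infty$ and $x=0$ respectively. A direct ramification analysis in the compositum shows that above each of $x=0,\infty$ there lie exactly $q$ places of $K$, each of ramification index $q$ over $\mathbb{K}(x)$; hence $\Delta$ has precisely two short orbits $\Omega_0$ and $\Omega_{\infty}$ on the places of $K$, both of size $q$.

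The plan is now to show, via the Deuring--Shafarevich formula, that no $p$-subgroup of $\aut_{\mathbb{K}}(K)$ properly contains $\Delta$; finiteness of $\aut_{\mathbb{K}}(K)$ then delivers the Sylow property. Suppose for contradiction that $S$ is a $p$-subgroup with $\Delta\subsetneq S$, so $|S|=q^2p^e$ for some $e\geq 1$. Since $K^S\subseteq K^{\Delta}=\mathbb{K}(x)$, L\"uroth's theorem gives $K^S$ rational and $\gamma(K^S)=0$; combined with $\gamma(K)=(q-1)^2$, formula (\ref{eq2deuring}) rearranges to
\begin{equation*}
\sum_{i=1}^{k}(|S|-\ell_i)=q^2(p^e+1)-2q,
\end{equation*}
where $\ell_1,\ldots,\ell_k$ are the short-orbit sizes of $S$. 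Each $\ell_i$ is a proper divisor of $|S|$, so $\ell_i\leq|S|/p$ and $|S|-\ell_i\geq q^2p^{e-1}(p-1)$, whence
\begin{equation*}
k\;\leq\;\frac{p+1}{p-1}-\frac{2}{qp^{e-1}(p-1)}\;<\;2,
\end{equation*}
so $k\leq 1$.

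The contradiction is then obtained by establishing $k\geq 2$, i.e., that $\Omega_0$ and $\Omega_{\infty}$ lie in distinct $S$-orbits. An $S$-orbit through a place of $\Omega_0$ is a disjoint union of $\Delta$-orbits, each of size $q$ (in which case it coincides with $\Omega_0$ or $\Omega_{\infty}$) or of size $q^2$, so its cardinality is $q(a+bq)$ for some $a\in\{1,2\}$ and $b\geq 0$. Because $|S|$ is a power of $p$, so is this cardinality, and thus $a+bq=p^c$ for some $c\geq 0$. Reducing modulo $p$ and using $p\mid q$ gives $a\equiv p^c\pmod p$, which for odd $p$ and $a\in\{1,2\}$ forces $c=0$ and hence $a=1$, $b=0$. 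Consequently the $S$-orbit of $\Omega_0$ is exactly $\Omega_0$, and likewise the $S$-orbit of $\Omega_{\infty}$ is exactly $\Omega_{\infty}$, producing $k\geq 2$ and the desired contradiction. The only delicate point in the argument is this orbit-merging check; the rest is routine once Deuring--Shafarevich is in place.
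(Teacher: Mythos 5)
Your proof is correct, but it takes a genuinely different route from the paper's. The paper disposes of the lemma in two lines by quoting Nakajima's bound: for a Sylow $p$-subgroup $S\supseteq\Delta$ one has $|S|\le \frac{p}{p-2}(\gamma(K)-1)=\frac{p}{p-2}(q^2-2q)<pq^2$, forcing $|S|=q^2$. You instead unwind what is essentially the proof of Nakajima's theorem in this special case: you identify $K^{\Delta}=\mathbb{K}(x)$ directly (which is cleaner than the paper's later Lemma \ref{raz}, proved there via the Hurwitz formula), determine the two short $\Delta$-orbits of size $q$ over $x=0$ and $x=\infty$ by explicit Artin--Schreier ramification (the paper gets this later, in Lemma \ref{short}, from Deuring--Shafarevich taken modulo $q^2$), and then run Deuring--Shafarevich for a hypothetical larger $p$-group $S$ to get $k\le 1$ short orbits while the orbit-size congruence $a+bq=p^c$ with $a\in\{1,2\}$ and $p$ odd forces $k\ge 2$. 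All the steps check out: the bound $\frac{p^e+1}{p^{e-1}(p-1)}\le\frac{p+1}{p-1}$ justifies your displayed inequality, and the case $p=3$ (where $\frac{p+1}{p-1}=2$ exactly) is saved by the strictly positive correction term $2/(qp^{e-1}(p-1))$. What the paper's argument buys is brevity at the cost of invoking a deep external theorem; what yours buys is self-containedness plus the short-orbit structure and the explicit description of $K^{\Delta}$ as byproducts, which the paper has to re-derive afterwards anyway.
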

\begin{proof} Let $S$ be a Sylow $p$-subgroup of $\aut_{\mathbb{K}}(K)$ containing $\Delta$. From Nakajima's bound \cite[Theorem 1]{N}, see also \cite[Theorem 11.84]{HKT},
$$|S| \leq \textstyle\frac{p}{p-2} (\gm(\cX)-1) = \textstyle\frac{p}{p-2} (q^2-2q) < pq^2,$$
whence $|S|=q^2$.
\end{proof}
\begin{rem} \emph{From the proof of Lemma \ref{nak}, if $q=p$ then $K$ hits the Nakajima's bound.}
\end{rem}
\begin{lemma} \label{pro} The subgroups $\Delta$, $W$, $V$, $\Phi$ of $G$ have the following properties:
\begin{enumerate}
\item[\rm(i)] $\Delta$ is a normal subgroup of $G$.
\item[\rm(ii)]  $W$ is a subgroup of the center $Z(\Phi)$ of $\Phi$.
\item[\rm(iii)] $\Phi = \Delta \rtimes V$.
\item[\rm(iv)]  $G=\Delta\rtimes (V\rtimes \langle \xi \rangle)$.
\end{enumerate}
\end{lemma}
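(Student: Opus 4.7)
The plan is to reduce everything to routine computations using the composition rule
$\varphi_{\alpha,\beta,v}\circ\varphi_{\alpha',\beta',v'}=\varphi_{v^{-m}\alpha'+\alpha,\,v^m\beta'+\beta,\,vv'}$
and the relation $\xi\varphi_{\alpha,\beta,v}\xi=\varphi_{\beta,\alpha,v^{-1}}$ established earlier.

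For (i), I would first compute $\varphi_{\gamma,\delta,w}^{-1}=\varphi_{-w^m\gamma,\,-w^{-m}\delta,\,w^{-1}}$ from the composition formula, then conjugate an arbitrary $\varphi_{\alpha,\beta,1}\in\Delta$ by a generic $\varphi_{\gamma,\delta,w}\in\Phi$. Two applications of the composition rule yield $\varphi_{\gamma,\delta,w}\varphi_{\alpha,\beta,1}\varphi_{\gamma,\delta,w}^{-1}=\varphi_{w^{-m}\alpha,\,w^m\beta,\,1}$, which lies in $\Delta$ since the third component is $1$; so $\Delta\triangleleft\Phi$. Together with $\xi\varphi_{\alpha,\beta,1}\xi=\varphi_{\beta,\alpha,1}\in\Delta$, this proves $\Delta\triangleleft G=\Phi\rtimes\langle\xi\rangle$.

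For (ii), take any $\varphi_{0,0,v}\in W$ (so $v^m=1$, hence $v^{-m}=1$) and any $\varphi_{\alpha,\beta,w}\in\Phi$. Direct computation from the composition rule gives
$\varphi_{0,0,v}\circ\varphi_{\alpha,\beta,w}=\varphi_{\alpha,\beta,vw}=\varphi_{\alpha,\beta,wv}=\varphi_{\alpha,\beta,w}\circ\varphi_{0,0,v}$,
so $W\subseteq Z(\Phi)$. For (iii), I note that $\Delta\triangleleft\Phi$ by (i), and $\Delta\cap V=\{1\}$ is immediate from the form of the elements (the $v$-component forces $v=1$ and then $\alpha=\beta=0$). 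A counting argument, $|\Delta|\cdot|V|=q^2\cdot m(q-1)=|\Phi|$, combined with the factorization $\varphi_{\alpha,\beta,1}\circ\varphi_{0,0,v}=\varphi_{\alpha,\beta,v}$, gives $\Phi=\Delta\rtimes V$.

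For (iv), I first check that $V\rtimes\langle\xi\rangle$ is indeed a subgroup: $\xi^2=\mathrm{id}$ (clear from $\xi:(x,s,z)\mapsto(1/x,z,s)$) and $\xi\varphi_{0,0,v}\xi=\varphi_{0,0,v^{-1}}\in V$, so $\langle\xi\rangle$ normalizes $V$; and $V\cap\langle\xi\rangle=\{1\}$ because no $\varphi_{0,0,v}$ swaps $s$ with $z$ as $\xi$ does. Since $\Delta\triangleleft G$ by (i), and $\Delta\cap(V\rtimes\langle\xi\rangle)=\{1\}$ (elements of the latter either have trivial $\alpha,\beta$ components or involve $\xi$, while $\Delta$ consists of non-$\xi$ elements with $v=1$), another order count $|\Delta|\cdot|V\rtimes\langle\xi\rangle|=q^2\cdot 2m(q-1)=|G|$ yields the claimed decomposition $G=\Delta\rtimes(V\rtimes\langle\xi\rangle)$. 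There is no real obstacle here: the whole argument is bookkeeping with the composition law, the only mild subtlety being to compute the inverse $\varphi_{\gamma,\delta,w}^{-1}$ correctly before doing the conjugation in (i).
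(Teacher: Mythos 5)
Your proof is correct and follows essentially the same route as the paper's: a direct computation with the composition law $\varphi_{\alpha,\beta,v}\circ\varphi_{\alpha',\beta',v'}=\varphi_{v^{-m}\alpha'+\alpha,\,v^m\beta'+\beta,\,vv'}$ to get normality of $\Delta$ and centrality of $W$, followed by intersection and order counting for the two semidirect product decompositions. (Your conjugation formula $\xi\circ\varphi_{\alpha,\beta,1}\circ\xi=\varphi_{\beta,\alpha,1}$ is the one consistent with the relation $\xi\varphi_{\alpha,\beta,v}\xi=\varphi_{\beta,\alpha,v^{-1}}$ established earlier in the paper; in either case the conjugate lies in $\Delta$, so (i) holds.)
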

\begin{proof} By a  direct computation,
$$ \varphi_{\alpha_1,\beta_1,v_1}^{-1} \circ \varphi_{\alpha,\beta,1} \circ \varphi_{\alpha_1,\beta_1,v_1} = \varphi_{(\alpha_1 v_1^{-m} + \alpha)v_1^{-m} - \alpha_1v_1^{-m}, (\beta_1 v_1^{m} + \beta)v_1^{m} - \beta_1v_1^{m}, 1},$$
for every $\varphi_{\alpha_1,\beta_1,v_1} \in \Phi$ and $ \varphi_{\alpha,\beta,1} \in \Delta$. Also, $\xi\circ\varphi_{\alpha,\beta,1}\circ \xi=\varphi_{-\alpha,-\beta,1}$.
Therefore (i) holds. Furthermore, (ii) is proven by a straightforward computation.
Since $\Delta$ is a normal subgroup of $G$, and $|\Delta|$ is prime to $|V|$,  we have $\langle \Delta, V \rangle =\Delta V= \Delta \rtimes V$. Moreover, $|\Delta V|= |\Delta| |V|=|\Phi|$. Thus, $\Phi = \Delta \rtimes V$. From this, (iv) also follows.
\end{proof}
\begin{lemma} \label{short} The action of $\Delta$ on the set $\cP$ of places of $K$ has exactly two short orbits both of length $q$.
\end{lemma}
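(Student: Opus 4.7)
The plan is to identify $K^\Delta$ as a rational subfield and reduce the question to a ramification analysis in the extension $K/K^\Delta$. First, every generator $\varphi_{\alpha,\beta,1}$ of $\Delta$ fixes $x$, so $\mathbb{K}(x)\subseteq K^\Delta$. Since $[K:\mathbb{K}(x)]=[K:F]\cdot[F:\mathbb{K}(x)]=q\cdot q=q^2=|\Delta|$ (using Proposition \ref{pro2} and the Artin-Schreier degree of $F/\mathbb{K}(x)$), Galois theory forces $K^\Delta=\mathbb{K}(x)$. Consequently the $\Delta$-orbits on $\cP$ correspond bijectively to the places of $\mathbb{K}(x)$, and a $\Delta$-orbit is short if and only if the underlying place of $\mathbb{K}(x)$ ramifies in $K/\mathbb{K}(x)$.

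Next, I analyze the ramification through the tower $\mathbb{K}(x)\subset \mathbb{K}(x,z)\subset K$. By (\ref{eq3}) the bottom step is the Artin-Schreier extension $z^q+z=x^m$, which is ramified only at $x=\infty$ and totally so there. By (\ref{eq4}), the top step is generated by $s=z-y$ satisfying $s^q+s=x^{-m}$, so it is ramified precisely at the places of $\mathbb{K}(x,z)$ above which $x$ vanishes, again totally. Since $x=0$ is unramified in $\mathbb{K}(x,z)/\mathbb{K}(x)$ it splits into $q$ places, each becoming totally ramified in $K/\mathbb{K}(x,z)$; every other place of $\mathbb{K}(x)$ remains unramified in $K/\mathbb{K}(x)$. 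Hence the only ramified places of $\mathbb{K}(x)$ are $x=0$ and $x=\infty$, lying under exactly $q$ places of $K$ of ramification index $q$ each.

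Finally, I verify transitivity of $\Delta$ on each of these two sets of $q$ places. The $q$ places over $x=0$ are parametrized by the residue of $z$, which must lie in the additive group $\mathcal{Z}=\{a\in\mathbb{K}:a^q+a=0\}$ of order $q$; the subgroup $\{\varphi_{0,\beta,1}:\beta\in \mathcal{Z}\}$ of $\Delta$ acts as $z\mapsto z+\beta$, translating the residues simply transitively. Thus $\Delta$ fuses them into one orbit of length $q$ with stabilizer $\{\varphi_{\alpha,0,1}:\alpha^q+\alpha=0\}$ of order $q$. The symmetric argument, now indexing places over $x=\infty$ by the residue of $s$ and using the subgroup $\{\varphi_{\alpha,0,1}\}$, produces a second orbit of length $q$ (this is also implied by the involution $\xi$ swapping the two configurations). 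Together with the previous step this yields exactly two short orbits, both of length $q$.

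The only delicate point is the ramification analysis in the top step $K/\mathbb{K}(x,z)$: one must recognize $K=\mathbb{K}(x,z,s)$ with $s^q+s=x^{-m}$ as established in the proof of Proposition \ref{pro1}, after which the standard Artin-Schreier ramification theory controls everything, and the orbit sizes are forced by the matching of the degrees.
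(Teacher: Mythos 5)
Your proof is correct, but it takes a genuinely different route from the paper's. The paper's proof is purely numerical: it applies the Deuring--Shafarevich formula (\ref{eq2deuring}) to $\Delta$, using the value $\gamma(K)=(q-1)^2$ from Proposition \ref{pro2}; reducing mod $q^2$ forces $\gamma(K^\Delta)=0$, and then $\sum_i(q^2-\lambda_i)=2q^2-2q$ leaves $\lambda_1=\lambda_2=q$ as the only admissible solution in powers of $p$. You instead identify $K^\Delta=\mathbb{K}(x)$ by a degree count (legitimate, since $[K:\mathbb{K}(x)]=q^2$ follows from Proposition \ref{pro1} and $[\mathbb{K}(x):H]=m(q-1)$), observe that $\Delta$-orbits are the fibres over places of $\mathbb{K}(x)$ and are short exactly at the ramified places, and then read off the ramification from the tower $\mathbb{K}(x)\subset\mathbb{K}(x,z)\subset K$ via standard Artin--Schreier theory: only $x=0$ and $x=\infty$ ramify, each carrying $q$ places of $K$ with ramification index $q$. (Your final ``transitivity'' paragraph is actually redundant: $\Delta=\mathrm{Gal}(K|K^\Delta)$ is automatically transitive on each fibre; what it adds is the explicit description of the stabilizers.) The trade-off: the paper's argument is shorter and needs no equations once $\gamma(K)$ is known, whereas yours is more self-contained and delivers several of the paper's subsequent results as by-products --- that $K^\Delta$ is rational (Lemma \ref{raz}), that the short orbits are $\supp(\Div(x)_0)$ and $\supp(\Div(x)_\infty)$ (essentially Lemma \ref{zp}), and the identification of the point stabilizers $\{\varphi_{\alpha,0,1}\}$ and $\{\varphi_{0,\beta,1}\}$ (Lemma \ref{lemA31oct2016}). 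One small point of care: your ramification analysis should note explicitly that the unique place of $\mathbb{K}(x,z)$ over $x=\infty$ is unramified in $K|\mathbb{K}(x,z)$ because $v(x^{-m})=qm>0$ there; you assert the conclusion (only zeros of $x$ ramify in the top step) but this is the step where the two wild ramification loci are seen not to interfere.
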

\begin{proof} From the Deuring-Shafarevich formula,
\begin{equation}
q^2-2q=\gamma(K) -1= |\Delta|(\gamma(K^\Delta) -1) + d, \label{ds}
\end{equation}
with $d=\sum_{i=1}^r (q^2- \lambda_i)$ where $\lambda_1 ,..., \lambda_r$ are the lengths of the $r$ short orbits of $\Delta$ in its action on $\cP$. Since $|\Delta|=q^2$, Equation (\ref{ds}) taken ${\rm{mod}}\,q^2$ yields that $d\geq q^2-2q$. Therefore, $\gamma(K^\Delta)=0$ and hence
$$q^2-2q= -q^2 + d. $$
Thus $i \leq 2$ and (\ref{ds}) reads $q^2-2q= -q^2+q^2-\lambda_1 +q^2 -\lambda_2 = q^2-(\lambda_1 +\lambda_2).$
whence $\lambda_1+\lambda_2=2q$, that is, $\lambda_1=\lambda_2=q$.
\end{proof}
For each point $P$ in a short orbit of $\Delta$, the fact that $\Delta$ is abelian together with Lemma \ref{short} yield the stabilizer $\Delta_P$ to have order $q$.
\begin{lemma}
\label{lem431luglio2016} For two points $P_1,P_2$ from different short orbits of $\Delta$, the stabilizers $\Delta_{P_1}$ and $\Delta_{P_2}$ have trivial intersection.
\end{lemma}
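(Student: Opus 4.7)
The plan is to pin down the two short orbits of $\Delta$ and then compute the two stabilizers explicitly; the triviality of the intersection will then be immediate.

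First I would locate the short orbits. By Proposition \ref{pro2}, $K|F$ is unramified with Galois group $\Psi$, so every short $\Delta$-orbit on $\cP$ projects to a short $\Delta/\Psi$-orbit on places of $F$. On $F=\mathbb{K}(x,y)$ the quotient $\Delta/\Psi$ acts as the group of Artin--Schreier translations $y\mapsto y+\gamma$ with $\gamma^q+\gamma=0$, whose fixed field is $\mathbb{K}(x)$; the extension $F|\mathbb{K}(x)$ defined by $y^q+y=x^m+x^{-m}$ is ramified exactly at the two places $x=0$ and $x=\infty$, since those are the only poles of $x^m+x^{-m}$. Consequently the two short orbits of $\Delta$ on $\cP$ lie above $x=0$ and $x=\infty$ respectively.

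Next I would compute $\Delta_{P_1}$ for a place $P_1$ of $K$ over $x=0$. Since $z^q+z=x^m$ vanishes at $P_1$, one has $z(P_1)=c_1$ for some trace-zero $c_1\in\mathbb{F}_q$, so $z-c_1$ lies in $\mathfrak{m}_{P_1}$. For $\sigma=\varphi_{\alpha,\beta,1}\in\Delta_{P_1}$ the image $\sigma(z-c_1)=z-(c_1-\beta)$ must also lie in $\mathfrak{m}_{P_1}$; but $c_1-\beta$ is another trace-zero constant, and if $\beta\neq 0$ it differs from $c_1$, forcing $v_{P_1}(z-(c_1-\beta))=0$, a contradiction. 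Hence $\beta=0$, giving the inclusion $\Delta_{P_1}\subseteq\{\varphi_{\alpha,0,1}:\alpha^q+\alpha=0\}$; since both sides have order $q$ (by the remark following Lemma \ref{short}), they coincide.

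Finally I would invoke the $\xi$-symmetry. Because $\xi$ interchanges $x$ and $1/x$, it swaps the two short orbits; as $\Delta$ is abelian, all stabilizers along a $\Delta$-orbit coincide, so $\Delta_{P_2}=\xi\Delta_{P_1}\xi^{-1}$. The conjugation formula $\xi\varphi_{\alpha,0,1}\xi=\varphi_{0,\alpha,1}$ from the proof of Lemma \ref{lem16Bluglio2016} then identifies $\Delta_{P_2}=\{\varphi_{0,\beta,1}:\beta^q+\beta=0\}$, whence $\Delta_{P_1}\cap\Delta_{P_2}=\{\varphi_{0,0,1}\}$ is trivial, as desired.

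The main obstacle is the middle step: one must recognize that a nonzero trace-zero shift in the $z$-coordinate moves $z-c_1$ out of $\mathfrak{m}_{P_1}$, thereby pinning down the precise complement $\{\varphi_{\alpha,0,1}\}$ of $\Psi$ in $\Delta$ as the stabilizer. Once this is in place, the rest of the argument is formal.
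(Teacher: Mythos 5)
Your proof is correct, but it takes a genuinely different route from the paper's. The paper argues by contradiction with a purely numerical computation: if the intersection were nontrivial then (since $\Delta$ is abelian, so stabilizers are constant along each orbit) $\Delta_{P_1}$ would fix all $2q$ places of $\Omega_1\cup\Omega_2$, and the Deuring--Shafarevich formula applied to $\Delta_{P_1}$ would force the $p$-rank of $K^{\Delta_{P_1}}$ to equal $1-q<0$, which is absurd. That argument needs only Lemma \ref{short} and the value of $\gamma(K)$, and is three lines long. You instead identify the two short orbits as lying over $x=0$ and $x=\infty$ (via the unramified tower $K|F$ and the Artin--Schreier ramification of $F|\mathbb{K}(x)$) and then compute the stabilizers explicitly as $\{\varphi_{\alpha,0,1}\}$ and $\{\varphi_{0,\beta,1}\}$; this is sound, and in fact proves the stronger Lemma \ref{lemA31oct2016} together with the substance of Lemma \ref{zp}, results the paper only establishes afterwards by analysing the Artin--Mumford quotient $K^W$. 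So your argument is longer where the paper's is shorter, but it delivers structural information the paper must develop separately anyway, and it avoids the Deuring--Shafarevich formula entirely. Two small points: the value $c_1=z(P_1)$ satisfies $c_1^q+c_1=0$ and hence lies in $\mathbb{F}_{q^2}$, not necessarily in $\mathbb{F}_q$ (this does not affect your argument, which only needs $c_1-\beta\neq c_1$ for $\beta\neq 0$); and the claim that short $\Delta$-orbits project to short $\Delta/\Psi$-orbits deserves the one-line justification that $\Psi\leq\Delta$ acts transitively on each fibre of $K|F$, so every $\Delta$-orbit has length exactly $q$ times that of its image.
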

\begin{proof} By absurd, $\Delta_{P_1}$ fixes as many as $2q$ places of $K$. The Deuring-Shafarevich formula applied to $\Delta_{P_1}$ yields that $q=1-\bar{\gamma}$ where $\bar{\gamma}$ is
the $p$-rank of  $K^{\Delta_{P_1}}$. But this cannot actually occur as $q>2$.
\end{proof}
\begin{lemma} Let $\Omega$ be a short orbit of $\aut_{\mathbb{K}}(K)$ containing both short orbits of $\Delta$. Then $\Omega$ is the unique non-tame short orbit of $\aut_{\mathbb{K}}(K)$.
\end{lemma}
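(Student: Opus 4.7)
The approach is a Sylow argument. By Lemma~\ref{lemB310ct2016}, $\Delta$ is a Sylow $p$-subgroup of $\aut_{\mathbb{K}}(K)$. So every $p$-element of $\aut_{\mathbb{K}}(K)$ lies in some Sylow $p$-subgroup, which by Sylow's theorem is conjugate to $\Delta$. This will transport any non-tame point to one in a short orbit of $\Delta$, forcing it to lie in $\Omega$.

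More precisely, the plan is as follows. Suppose $P\in\cP$ lies in a non-tame short orbit of $\aut_{\mathbb{K}}(K)$. Then the stabilizer $\aut_{\mathbb{K}}(K)_P$ contains a non-trivial $p$-element $\sigma$. The element $\sigma$ is contained in some Sylow $p$-subgroup $S$ of $\aut_{\mathbb{K}}(K)$, and by Sylow's theorem there exists $g\in\aut_{\mathbb{K}}(K)$ with $S=g\Delta g^{-1}$. Then $g^{-1}\sigma g$ is a non-trivial element of $\Delta$ fixing $g^{-1}P$, so $g^{-1}P$ lies in one of the two short orbits of $\Delta$ described in Lemma~\ref{short}. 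By hypothesis, both short orbits of $\Delta$ are contained in $\Omega$, and since $\Omega$ is invariant under $\aut_{\mathbb{K}}(K)$, this forces $P=g(g^{-1}P)\in\Omega$. This shows that every non-tame short orbit of $\aut_{\mathbb{K}}(K)$ is contained in $\Omega$, hence coincides with $\Omega$.

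It remains to verify that $\Omega$ itself is non-tame. Pick any point $P\in\Omega$ lying in a short orbit of $\Delta$; by Lemma~\ref{short} and the remark following it, $|\Delta_P|=q$, so the stabilizer $\aut_{\mathbb{K}}(K)_P\supseteq\Delta_P$ has order divisible by $p$. Therefore $\Omega$ is a non-tame short orbit, and by the previous paragraph it is the unique one.

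I do not expect any substantive obstacle: the argument is essentially bookkeeping with Sylow's theorem plus Lemmas~\ref{lemB310ct2016} and~\ref{short}. The only point requiring mild care is to ensure that the $p$-element guaranteed by non-tameness really does lie in a conjugate of $\Delta$, which is exactly what the Sylow property from Lemma~\ref{lemB310ct2016} gives us.
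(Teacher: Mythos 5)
Your proof is correct and follows essentially the same route as the paper: both reduce a non-tame place to the short orbits of $\Delta$ via the fact that $\Delta$ is a Sylow $p$-subgroup (Lemma~\ref{nak}) and then invoke Lemma~\ref{short}. In fact your version is slightly more careful than the paper's, which asserts $S_p=\Delta$ outright (implicitly using normality of $\Delta$ in $\aut_{\mathbb{K}}(K)$, not yet established at that point), whereas you only need Sylow conjugacy together with the $\aut_{\mathbb{K}}(K)$-invariance of $\Omega$ to pull the place back into $\Omega$.
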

\begin{proof}
Take a place $P\in \cP$ outside $\Omega$. By absurd, the stabilizer of $P$ in $\aut_{\mathbb{K}}(K)$ contains a non-trivial $p$-subgroup. Let $S_p$ be a Sylow $p$-subgroup containing that subgroup. Lemma \ref{nak} together with claim (i) of Proposition \ref{pro} yields that $S_p=\Delta$. Now, the proof follows from Lemma \ref{short}.
\end{proof}
The following results provide characterizations of the short orbits of $\Delta$.
\begin{lemma} \label{09} $W$ fixes each place in the short orbits of $\Delta$.
\end{lemma}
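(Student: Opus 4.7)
The plan is to first identify the two short $\Delta$-orbits $\Omega_0$ and $\Omega_\infty$ explicitly, and then to verify that every $\omega\in W$ fixes each place of $\Omega_0\cup\Omega_\infty$ by a direct check on residues.

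Since $\varphi_{\alpha,\beta,1}(x)=x$ for every element of $\Delta$, the rational subfield $\mathbb{K}(x)$ is contained in $K^\Delta$; comparing degrees, $[K:\mathbb{K}(x)]=q^2=|\Delta|$, gives $K^\Delta=\mathbb{K}(x)$. The extension $K|\mathbb{K}(x)$ is the compositum of the two Artin--Schreier towers $\mathbb{K}(x)\subset\mathbb{K}(x,z)$ with $z^q+z=x^m$ and $\mathbb{K}(x)\subset\mathbb{K}(x,s)$ with $s^q+s=x^{-m}$. The former is ramified solely over $x=\infty$ and the latter only over $x=0$, so ramification in $K|\mathbb{K}(x)$ occurs exclusively over these two places of $\mathbb{K}(x)$. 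A Hensel argument on the unramified factor above each of $x=0$ and $x=\infty$ shows that in each case there are exactly $q$ places of $K$, all of ramification index $q$ over $\mathbb{K}(x)$; by Lemma~\ref{short} these two sets must be precisely the short $\Delta$-orbits $\Omega_0$ and $\Omega_\infty$.

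I would next coordinatise the two orbits by residues. The $q$ places of $\Omega_0$ are pairwise distinguished by the residue $z(P)=\alpha$, where $\alpha$ runs through the $q$ roots of $t^q+t=0$ (at such a place $s$ has a pole, so the $s$-residue is unavailable, but $z$ satisfies $z^q+z=x^m\equiv 0$ at $P$). Symmetrically, the $q$ places of $\Omega_\infty$ are distinguished by the residue $s(P)=\beta$ with $\beta^q+\beta=0$. Now take $\omega=\varphi_{0,0,v}\in W$, so that $v^m=1$; the action specializes to $(x,s,z)\mapsto(vx,s,z)$. For $P\in\Omega_0$ the image $\omega(P)$ still lies over $x=0$ (since $\omega$ scales $x$ by the unit $v$), and the residue of $z$ at $\omega(P)$ equals $(\omega^{-1}z)(P)=z(P)=\alpha$; because $P$ is the \emph{unique} place of $\Omega_0$ with $z$-residue $\alpha$, we get $\omega(P)=P$. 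The argument for $P\in\Omega_\infty$ is identical with the role of $z$ replaced by $s$.

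The only delicate step is the first one, namely matching the two abstract short orbits delivered by Lemma~\ref{short} with the two concrete $\Delta$-orbits sitting above $x=0$ and $x=\infty$ in $K|\mathbb{K}(x)$. Once the places in $\Omega_0$ and $\Omega_\infty$ are parametrised by the $z$- and $s$-residues, the fact that $W$ fixes $s$ and $z$ pointwise makes the conclusion immediate.
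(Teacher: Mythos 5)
Your proof is correct, but it takes a genuinely different route from the paper. The paper's argument is purely group-theoretic: since $W\le Z(\Phi)$, the group $\Delta\times W$ is abelian and permutes each short $\Delta$-orbit; an abelian group acting transitively on $q$ points acts regularly, so the kernel of the permutation representation of $\Delta\times W$ on a short orbit has order $qm$ and therefore contains $W$, the unique subgroup of order $m$ of the abelian group $\Delta\times W$. You instead make everything explicit: you identify $K^\Delta=\mathbb{K}(x)$, decompose $K|\mathbb{K}(x)$ into the two Artin--Schreier constituents with disjoint ramification loci, show that the fibres over $x=0$ and $x=\infty$ each consist of $q$ places and hence (by the counting in Lemma~\ref{short}) are exactly the two short orbits, parametrise the places in each fibre by the residues of $z$ (resp.\ $s$), and observe that $W$ fixes $x=0$, $x=\infty$, $s$ and $z$. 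All the steps check out: the fibres are $\Delta$-invariant unions of short orbits of total size $q$, hence single short orbits; the residue of $z$ does separate the $q$ places over $x=0$ because $x=0$ splits completely in $\mathbb{K}(x,z)$ and is totally ramified in the $s$-direction; and $\varphi_{0,0,v}$ with $v^m=1$ acts as $(x,s,z)\mapsto(vx,s,z)$. What your approach buys is concreteness and an independent proof of the content of Lemma~\ref{zp} (that the short orbits are exactly the fibres over $x=0,\infty$, i.e.\ the supports of $\Div(s)_\infty$ and $\Div(z)_\infty$), which the paper only establishes afterwards via the Artin--Mumford quotient and whose proof there actually relies on Lemma~\ref{09}; what the paper's argument buys is brevity and independence from any explicit model of the short orbits. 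One small point worth making explicit in your write-up: the equality $[K:\mathbb{K}(x)]=q^2$ uses the linear disjointness of the two Artin--Schreier extensions, which you justify implicitly through their disjoint ramification loci.
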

\begin{proof} By Lemma \ref{pro}, $\Delta\times W$ is an abelian group. From Lemmas \ref{short} and \ref{lem431luglio2016}, $\Delta\times W$ induces a permutation group on both short orbits of $\Delta$. The nucleus of the permutation representation of $\Delta\times W$ on any of them has order $qm$ and hence it contains $W$, the unique subgroup of $\Delta\times W$ of order $m$.
\end{proof}
\begin{lemma} \label{zp} $\supp(\Div(s)_\infty)$ and $\supp(\Div(z)_\infty)$ are the short orbits of $\Delta$.
\end{lemma}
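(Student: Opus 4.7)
The plan is to identify the two short orbits of $\Delta$ concretely as the sets of places of $K$ lying over $x=0$ and over $x=\infty$ in the rational subfield $\mathbb{K}(x)$. First I would observe that every $\varphi_{\alpha,\beta,1}\in\Delta$ fixes $x$, so $\Delta$ stabilises each fibre of the covering $K|\mathbb{K}(x)$. Second, the defining equations pin down the pole loci: from $z^q+z=x^m$ one gets that a place $P$ of $K$ satisfies $v_P(z)<0$ if and only if $v_P(x)<0$, so $\supp(\Div(z)_\infty)$ is exactly the fibre above $x=\infty$; symmetrically, from $s^q+s=1/x^m$ one gets $v_P(s)<0$ iff $v_P(x)>0$, so $\supp(\Div(s)_\infty)$ is exactly the fibre above $x=0$. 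In particular, both pole supports are $\Delta$-invariant and pairwise disjoint.

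Next I would count each fibre using the tower $\mathbb{K}(x)\subset \mathbb{K}(x,z)\subset K$. Since $\gcd(m,q)=1$, the Artin-Schreier extension $\mathbb{K}(x,z)|\mathbb{K}(x)$ is totally ramified of degree $q$ at $x=\infty$ (because $x^m$ has a pole of order coprime to $q$ there); then $K|\mathbb{K}(x,z)$ splits completely over the place above, because there $1/x^m$ is regular with value $0$, so $s^q+s$ reduces to an Artin-Schreier polynomial with $q$ distinct roots in the residue field. Dually, $x=0$ splits completely in the first step and each of the $q$ places above is totally ramified of degree $q$ in $K|\mathbb{K}(x,z)$. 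Either way, one lands on exactly $q$ places of $K$ in the corresponding fibre.

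To conclude, each of $\supp(\Div(z)_\infty)$ and $\supp(\Div(s)_\infty)$ is a $\Delta$-invariant set of cardinality $q<q^2=|\Delta|$, hence consists entirely of places lying in short $\Delta$-orbits. By Lemma~\ref{short}, $\Delta$ has exactly two short orbits, both of length $q$; since the two pole supports already have the correct cardinality and are mutually disjoint, each must coincide with one of these orbits. The only mildly delicate step is the ramification analysis in the Artin-Schreier tower, but this is a standard conductor computation that relies only on $\gcd(m,q)=1$.
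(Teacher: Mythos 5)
Your proof is correct, but it takes a genuinely different route from the paper's. The paper descends to the Artin--Mumford quotient $AM=K^W=\mathbb{K}(s,z)$ with $(s^q+s)(z^q+z)=1$, reads off the two short orbits of the Sylow $p$-subgroup of $\aut_{\mathbb{K}}(AM)$ from the two ordinary singular points of multiplicity $q$ of the plane model $(X^q+X)(Y^q+Y)=1$, and then lifts these orbits back to $K$ using Lemma~\ref{09} (that $W$ fixes each place in the short orbits) before invoking Lemma~\ref{short} to conclude. You instead work upward through the tower $\mathbb{K}(x)\subset\mathbb{K}(x,z)\subset K$: you identify $\supp(\Div(z)_\infty)$ and $\supp(\Div(s)_\infty)$ with the fibres of $K|\mathbb{K}(x)$ over $x=\infty$ and $x=0$ via the valuation dichotomies forced by $z^q+z=x^m$ and $s^q+s=x^{-m}$, count each fibre as $q$ places by the standard Artin--Schreier ramification criterion (total ramification where the right-hand side has a pole of order $m$ prime to $p$, complete splitting where it vanishes), and then match against Lemma~\ref{short}. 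All the individual steps check out: the valuation arguments are sound, the ramification indices $-q$ and $-m$ of $x^{\pm m}$ at the relevant places are indeed prime to $p$, and a $\Delta$-invariant set of $q$ places must be a single short orbit once Lemma~\ref{short} is known. What the paper's route buys is coherence with the rest of the argument (the subfield $AM$ and Lemma~\ref{09} are reused later, e.g.\ in Lemma~\ref{artm}); what yours buys is a more elementary, self-contained computation that also makes explicit the identification of $\Omega_1$ and $\Omega_2$ with the fibres over $x=0$ and $x=\infty$, which the paper only uses implicitly afterwards (Lemma~\ref{lem1agosto2016}), together with the precise ramification data of those places over $\mathbb{K}(x)$.
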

\begin{proof} From the proof of Proposition \ref{pro1}, the subfield $K^W$ is the Artin-Mumford function field $AM=\mathbb{K}(s,z)$ with (\ref{eq4}).  By (ii) of Lemma \ref{pro},
the centralizer of $W$ in $\aut_{\mathbb{K}}(K)$ contains $\Delta$. Since $W\cap \Delta=\{1\}$, the restriction of the action of  $\Delta$ on $AM$ is a subgroup of $\aut_{\mathbb{K}}(AM)$. On the other hand, $AM$ is the function field of the plane algebraic curve $\cC$ of affine equation $(X^q+X)(Y^q+Y)=1$ which has only two singular points, namely  $X_{\infty}$ and $Y_{\infty}$, both ordinary singularities of multiplicity $q$. On the set of places, that is, branches of $\cC$, $\Delta$ has a faithful action. Further, the unique Sylow $p$-subgroup $S_p$ of $\aut_{\mathbb{K}}(\cC)$ has order $q^2$ and a subgroup of $S_p$ of order $q$ fixes each of the $q$ places centered at $X_\infty$ and acts transitively on the set of the $q$ places centered at $Y_\infty$. Another subgroup of $S_p$ of order $q$ acts in the same way if the roles of the places centered at $X_\infty$ and $Y_\infty$ are interchanged. In particular, $\Delta=S_p$, and $\Delta$ has exactly two short orbits each of length $q$. In terms of $AM$,  $\Div(s)_\infty$ is the sum of the $q$ places centered at $X_\infty$.
This together with Lemma \ref{09} shows that
the places of $M$ lying over these $q$ places in the extension $K|AM$ form a short orbit of $\Delta$.
Similarly, $\div(z)_\infty$ is the sum of the $q$ places centered at $X_\infty$, and the places of $K$ lying over the $q$ places centered at $Y_\infty$ form a short orbit of $\Delta$. From  Lemma \ref{short}, $\Div(s)_\infty$ and $\Div(z)_\infty$ are the short orbits of $\Delta$.
\end{proof}
 From now on $\Omega_1$ and $\Omega_2$ denote the two short orbits of $\Delta$ as given in Lemma \ref{short}. Up to a change of notation, $\Div(s)_0=\Omega_1$ and $\Div(s)_\infty=\Omega_2$.
 A byproduct of the proof of Lemma \ref{zp} is the following result.
\begin{lemma}
\label{lemA31oct2016} The stabilizer of any point $P\in \Omega_1$ in $\Delta$ consists of all $\varphi_{\alpha,0,1}$ with $\alpha^q+\alpha=0$. The same holds for $P\in \Omega_2$ and $\varphi_{0,\beta,1}$ with $\beta^q+\beta=0$.
\end{lemma}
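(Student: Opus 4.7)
The plan is to identify the stabilizer by exhibiting an explicit subgroup of $\Delta$ of order $q$ that fixes a place in the orbit, and then to invoke the bound $|\Delta_P|=q$ established just before Lemma \ref{lem431luglio2016} to conclude equality. Since $\Delta$ is abelian, the stabilizers of points in a single $\Delta$-orbit coincide, so it suffices to work with one representative in each short orbit.

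Set $H_1:=\{\varphi_{\alpha,0,1}\mid \alpha^q+\alpha=0\}$ and $H_2:=\{\varphi_{0,\beta,1}\mid \beta^q+\beta=0\}$. By inspection, both are subgroups of $\Delta$ of order $q$ with $H_1\cap H_2=\{1\}$. The automorphism $\varphi_{\alpha,0,1}$ fixes both $x$ and $z$ pointwise, hence $K^{H_1}\supseteq \mathbb{K}(x,z)$; combined with the identity $K=\mathbb{K}(x,z)(s)$ and the Artin-Schreier relation $s^q+s=1/x^m$, this gives $[K:\mathbb{K}(x,z)]=q$, so $K^{H_1}=\mathbb{K}(x,z)$ and $H_1=\gal(K|\mathbb{K}(x,z))$. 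By the standard ramification theory for Artin-Schreier extensions, $K|\mathbb{K}(x,z)$ is ramified exactly at the poles of $1/x^m$ in $\mathbb{K}(x,z)$, namely at the $q$ places lying above $x=0$ in $\mathbb{K}(x)$ (these arise because $z^q+z=0$ has $q$ distinct roots in $\mathbb{K}$); at each such place the extension is totally ramified with inertia group $H_1$.

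Consequently every element of $H_1$ fixes each of the $q$ places of $K$ lying above $x=0$. By Lemma \ref{lem431luglio2016} these $q$ fixed places cannot straddle both short orbits of $\Delta$, for otherwise $H_1\subseteq \Delta_{P_1}\cap\Delta_{P_2}=\{1\}$ for $P_1, P_2$ in different orbits, a contradiction. Hence the $q$ fixed places form exactly one short orbit of $\Delta$, which we label $\Omega_1$. Since $H_1\subseteq\Delta_P$ and $|\Delta_P|=q=|H_1|$, we conclude $\Delta_P=H_1$ for every $P\in\Omega_1$. The symmetric argument, applied to $H_2$, the intermediate field $\mathbb{K}(x,s)=K^{H_2}$, and the Artin-Schreier relation $z^q+z=x^m$ whose ramification locus lies above $x=\infty$, yields $\Delta_P=H_2$ for every $P\in\Omega_2$. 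The only delicate point is the verification that the two Artin-Schreier extensions have exactly the stated ramification loci, but this is routine from the standard criterion.
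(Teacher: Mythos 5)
Your proof is correct, but it reaches the conclusion by a different route than the paper. The paper obtains this lemma as a byproduct of the proof of Lemma \ref{zp}: it passes to the quotient $K^W=AM=\mathbb{K}(s,z)$, reads off from the plane model $(X^q+X)(Y^q+Y)=1$ that a subgroup of order $q$ of the Sylow $p$-subgroup fixes each of the $q$ branches at $X_\infty$ while acting transitively on those at $Y_\infty$ (and vice versa), and then lifts this to $K$ via Lemma \ref{09}, using that the short orbits of $\Delta$ sit over the places centered at the two singular points. You instead stay inside $K$ and use the index-$q$ intermediate fields $K^{H_1}=\mathbb{K}(x,z)$ and $K^{H_2}=\mathbb{K}(x,s)$, identifying $H_1$ and $H_2$ as the Galois groups of the Artin--Schreier extensions $s^q+s=x^{-m}$ and $z^q+z=x^m$ and locating their (total) ramification at the $q$ places over $x=0$, respectively $x=\infty$; the standard criterion applies because the pole orders equal $m$, which is prime to $p$, and because $x=0$ splits completely in $\mathbb{K}(x,z)|\mathbb{K}(x)$ while $x=\infty$ splits completely in $\mathbb{K}(x,s)|\mathbb{K}(x)$. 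Your use of Lemma \ref{lem431luglio2016} to show the $q$ fixed places fill exactly one short orbit, and of $|\Delta_P|=q$ (from Lemma \ref{short} and commutativity of $\Delta$) to upgrade containment to equality, is sound; the labelling of the orbit of the zeros of $x$ as $\Omega_1$ agrees with the convention the paper fixes after Lemma \ref{zp}. What your approach buys is self-containment: it needs only elementary Artin--Schreier ramification theory and avoids invoking the structure of the Artin--Mumford curve and its singular plane model; what the paper's approach buys is economy, since the analysis of $AM$ is needed there anyway for Lemma \ref{zp} and the lemma then comes for free.
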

We prove another result on the zeroes and poles of $x$.
\begin{lemma}
\label{lem1agosto2016} The zeroes of $x$, as well as the poles of $x$, have the same multiplicity.
\end{lemma}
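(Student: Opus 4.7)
The plan is to identify the support of the zero divisor of $x$ with $\Omega_2$ and the support of the pole divisor of $x$ with $\Omega_1$, and then to exploit the fact that every element of $\Delta$ fixes $x$. Since $\Delta$ acts transitively on each of $\Omega_1$ and $\Omega_2$ (each being a single $\Delta$-orbit of length $q$), the integer $v_P(x)$ is forced to be constant as $P$ ranges over either orbit, which is exactly the assertion of the lemma.

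The first step is to match the zeros and poles of $x$ with the two short orbits. Subtracting (\ref{eq3}) from (\ref{eq2}) and using $s=z-y$ yields the Artin--Schreier relation $s^q+s=x^{-m}$; hence for any place $P$ of $K$ one has $v_P(s)<0$ if and only if $v_P(x^{-m})<0$, equivalently $v_P(x)>0$. Applying the analogous reasoning to (\ref{eq3}) gives $v_P(z)<0$ if and only if $v_P(x)<0$. Together with Lemma \ref{zp} and the subsequent identification $\Div(s)_\infty=\Omega_2$ (and therefore $\Div(z)_\infty=\Omega_1$), this shows that the zero divisor of $x$ is supported on $\Omega_2$ and the pole divisor of $x$ is supported on $\Omega_1$.

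The second step is the transitivity argument. Every generator $\varphi_{\alpha,\beta,1}$ of $\Delta$ fixes $x$ by its defining formula; hence for every $\varphi\in\Delta$ and every place $P$ of $K$ one has $v_{\varphi(P)}(x)=v_P(\varphi^{-1}(x))=v_P(x)$. Since $\Omega_1$ and $\Omega_2$ are $\Delta$-orbits, $v_P(x)$ is constant on $\Omega_1$ and constant on $\Omega_2$; these two common values are precisely the multiplicities asserted in the lemma.

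I do not foresee any real obstacle: the whole proof rests on the orbit description already provided by Lemma \ref{zp} combined with the trivial action of $\Delta$ on the $x$-coordinate, both of which are in hand. The only small verification is the equivalence ``zero of $x$ $\Leftrightarrow$ pole of $s$'' (and the analogous equivalence for poles of $x$ and poles of $z$), and this follows at once from the two Artin--Schreier equations defining $K$.
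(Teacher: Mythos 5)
Your proof is correct and follows essentially the same route as the paper: locate the zeroes and poles of $x$ in the two short orbits via Lemma \ref{zp}, then use that $\Delta$ fixes $x$ and acts transitively on each orbit to force $v_P(x)$ to be constant there. The paper adds one further remark you omit --- since $|\Omega_1|=|\Omega_2|=q$ and $\deg\Div(x)_0=\deg\Div(x)_\infty$, the common zero multiplicity actually equals the common pole multiplicity --- but this is immediate from what you have established.
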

\begin{proof} From Lemma \ref{zp}, any zero of $x$ is a point of $\Omega_1$. Since  $\Delta$ fixes $x$, and $\Omega_1$ is an orbit of $\Delta$, the claim follows for the zeroes of $x$.
The same argument works for the poles of $x$ whenever $\Omega_1$ is replaced by $\Omega_2$. Since $|\Omega_1|=|\Omega_2|$, we also have that the multiplicity of any zero of $x$ is equal to that of any pole of $x$.
\end{proof}
\begin{lemma} \label{raz} The subfield $K^\Delta$ of $K$ is rational.
\end{lemma}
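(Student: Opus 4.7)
The plan is to identify $K^{\Delta}$ explicitly as the rational subfield $\mathbb{K}(x)$ by a degree count.

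First I would observe that every element of $\Delta$ fixes $x$: indeed $\varphi_{\alpha,\beta,1}(x,s,z)=(x,s+\alpha,z+\beta)$, so $x$ is $\Delta$-invariant. This gives the inclusion $\mathbb{K}(x)\subseteq K^{\Delta}$.

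Next I would compute $[K:\mathbb{K}(x)]$. Since $F=\mathbb{K}(x,y)$ with $y^q+y=x^m+x^{-m}$, and the right-hand side has a pole of order $m$ at $x=0$ with $\gcd(m,p)=1$, the defining polynomial is irreducible Artin--Schreier, giving $[F:\mathbb{K}(x)]=q$. Combining this with Proposition \ref{pro2}, which states that $[K:F]=q$, I obtain
\[
[K:\mathbb{K}(x)]=[K:F]\cdot[F:\mathbb{K}(x)]=q^{2}.
\]

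Finally, since $L\mid L^{G}$ is Galois of degree $|G|$ for any subgroup $G$, Lemma \ref{lemB310ct2016} gives $[K:K^{\Delta}]=|\Delta|=q^{2}$. Combining with the inclusion $\mathbb{K}(x)\subseteq K^{\Delta}$ and the equality of degrees forces $K^{\Delta}=\mathbb{K}(x)$, which is rational. There is no real obstacle here; the only subtlety is ensuring the irreducibility of the Artin--Schreier polynomial for $F|\mathbb{K}(x)$, which is immediate from the non-trivial pole order at $x=0$.
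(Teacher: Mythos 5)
Your proof is correct, but it takes a genuinely different and more elementary route than the paper. You identify $K^{\Delta}$ explicitly as $\mathbb{K}(x)$: the inclusion $\mathbb{K}(x)\subseteq K^{\Delta}$ is immediate since $\varphi_{\alpha,\beta,1}$ fixes $x$, and the degree count $[K:\mathbb{K}(x)]=[K:F]\,[F:\mathbb{K}(x)]=q\cdot q=q^{2}=|\Delta|=[K:K^{\Delta}]$ forces equality. (Your appeal to the pole of order $m$ prime to $p$ at $x=0$ to get irreducibility of the Artin--Schreier polynomial is the right justification for $[F:\mathbb{K}(x)]=q$; note that the equality $[K:K^{\Delta}]=|\Delta|$ is just Artin's lemma together with $|\Delta|=q^{2}$, which holds by construction --- the citation of the Sylow lemma is not really what is needed there, though this is cosmetic.) The paper instead argues via ramification: it bounds the different in the Hurwitz genus formula applied to $\Delta$ from below, using the fact that a prime-to-$p$ group $U\supseteq W$ centralizing $\Delta_{P}$ fixes $P$ and hence forces the first $m+1$ ramification groups at each place of $\Omega_{1}\cup\Omega_{2}$ to coincide, and then concludes $\gg(K^{\Delta})=0$ from $2\gg(K)-2=2(q^{2}m-qm-q)$. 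Your argument is shorter, avoids the Hurwitz formula entirely, and yields the stronger explicit statement $K^{\Delta}=\mathbb{K}(x)$; the paper's computation, on the other hand, establishes as a byproduct the precise ramification filtration of $\Delta$ at $\Omega_{1}\cup\Omega_{2}$ and the structure of the centralizer of $\Delta$ (Lemma \ref{lem4Agosto2016}), which are reused later (e.g.\ in Lemma \ref{gprank}), so the two proofs are complementary rather than interchangeable in the larger architecture of the paper.
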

\begin{proof} For a place $P\in \Omega_1\cup \Omega_2$, let $U$ be a subgroup of $\aut_{\mathbb{K}}(K)$ fixing $P$ whose order $u$ is prime to $p$. Then $U$ is a cyclic group. Suppose that $U$ centralizes $\Delta_P$. Then $U\Delta_P$ is an abelian group of order $uq$. Furthermore, the first $u+1$ ramification groups coincide, that is, $\Delta_P^{(0)}=\Delta_P^{(1)}=\ldots=\Delta_P^{(u)}$, see \cite[Lemma 11.75 (iv)]{HKT}. Since $\Delta_P=\Delta_P^{(0)}$ has order $q$ by Lemma \ref{short}, the Hurwitz genus formula applied to $\Delta$ gives
$$2\gg(K)-2\geq q(2\gg(K^\Delta)-2) +2q(q-1)(u+1)$$
By (ii) of Lemma \ref{pro} and Lemma \ref{09}, $U$ may be assumed to contain $W$. Then $2q^2(u+1)\geq 2q^2(m+1)$. This together with $2\gg(K)-2=2(q^2m-qm-q)$ yields $\gg(K^\Delta)=0$.
\end{proof}
The proof of Lemma \ref{raz} also gives the following result.
\begin{lemma}
\label{lem4Agosto2016} The centralizer of $\Delta$ in $\aut_{\mathbb{K}}(K)$ is $\Delta\times W$.
\end{lemma}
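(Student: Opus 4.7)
The containment $\Delta\times W\subseteq C:=C_{\aut_{\mathbb{K}}(K)}(\Delta)$ is immediate from Lemma~\ref{pro}(ii) together with $\Delta$ being abelian, so the task is to bound $|C|$ from above. My first step is to pin down the structure of $C$. For any $g\in C$ and any $P\in\Omega_1\cup\Omega_2$ the identity $g\Delta_Pg^{-1}=\Delta_{g(P)}$ combined with $g$ centralising $\Delta$ gives $\Delta_{g(P)}=\Delta_P$; by Lemma~\ref{lemA31oct2016} the $\Delta$-stabilisers at places of $\Omega_1$ and of $\Omega_2$ are distinct subgroups of $\Delta$, so $g$ preserves each short orbit. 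Passing to $\bar C=C/\Delta\leq\aut_{\mathbb{K}}(K^\Delta)$ and using that $K^\Delta$ is rational by Lemma~\ref{raz}, we see that $\bar C$ embeds in $\PGL(2,\mathbb{K})$ as a finite subgroup fixing two distinct places; such a subgroup lies in a split torus and is therefore cyclic of some order $u$ coprime to $p$. Schur--Zassenhaus produces a complement $U$ of $\Delta$ in $C$, and since $U\subseteq C$ centralises $\Delta$ we obtain $C=\Delta\times U$ with $U$ cyclic of order $u$ prime to $p$.

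Next I would show that $U$ fixes every place of $\Omega_1$, and symmetrically every place of $\Omega_2$. Since $U$ commutes with $\Delta$ and $\Delta$ acts transitively on $\Omega_1$, all $U$-orbits on $\Omega_1$ have a common length; this length divides both $|U|=u$ (coprime to $p$) and $|\Omega_1|=q$, and so equals $1$.

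The final step is the ramification-theoretic bound $u\leq m$, obtained by running the argument from the proof of Lemma~\ref{raz} essentially verbatim. For $P\in\Omega_1$ the abelian group $U\Delta_P$ has order $uq$, so by \cite[Lemma~11.75(iv)]{HKT} the first $u+1$ ramification groups of $\Delta$ at $P$ all coincide with $\Delta_P$, giving $d_P\geq(u+1)(q-1)$. Summing over the $2q$ places in $\Omega_1\cup\Omega_2$ and applying the Hurwitz genus formula to $\Delta$ with $\gg(K)=(q-1)(qm-1)$ and $\gg(K^\Delta)=0$ produces
$$2q^2m-2qm-2q\;\geq\;-2q^2+2q(q-1)(u+1),$$
which simplifies to $(q-1)(m+1)\geq(q-1)(u+1)$, i.e.\ $u\leq m$. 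Hence $|C|=q^2u\leq q^2m=|\Delta\times W|$, and combined with the reverse inclusion this yields $C=\Delta\times W$.

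The only non-routine point in this plan is the passage from $\bar C$ fixing two places of $K^\Delta$ to $U$ fixing the orbits $\Omega_1$ and $\Omega_2$ pointwise; once this is in place, the ramification machinery of Lemma~\ref{raz} applies at once and the genus computation is mechanical.
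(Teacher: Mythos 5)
Your proof is correct and follows essentially the same route as the paper, which derives this lemma as a stated byproduct of the proof of Lemma \ref{raz}: the Hurwitz-formula estimate with the $u+1$ coinciding ramification groups, now run with $\gg(K^\Delta)=0$ already known, forces $u\le m$. The structural preliminaries you supply (that the centralizer is $\Delta\times U$ with $U$ cyclic, of order prime to $p$, and fixing $\Omega_1\cup\Omega_2$ pointwise) are exactly the details the paper leaves implicit.
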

\section{Main result}
\label{mains}
Our goal is to prove the following result.
\begin{theorem}
\label{mr} Let $K$ be the Galois closure of the extension $F|H$ where $F=F(x,y)$ with $y^q+y=x^m+x^{-m}$, and $H=\mathbb{K}(x^{m(q-1)})$. Then
$\aut_{\mathbb{K}}(K)=\Delta\rtimes(C_{m(q-1)}\rtimes \langle \xi \rangle)$ where $\Delta$ is an elementary abelian normal subgroup of order $q^2$, $C_{(q-1)m}$ is a cyclic subgroup and $\xi$ is an involution.
\end{theorem}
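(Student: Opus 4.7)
The plan is to prove $A = G$ where $A := \aut_{\mathbb{K}}(K)$ and $G = \Delta\rtimes(V\rtimes\langle\xi\rangle)$ has order $2q^2m(q-1)$ by Lemma~\ref{lem16Bluglio2016}. A preliminary observation is that $\Omega := \Omega_1\cup\Omega_2$ is a single $A$-orbit of size $2q$, since $\xi\in G$ swaps $\Omega_1$ and $\Omega_2$ while the uniqueness of the non-tame short $A$-orbit (the unlabelled lemma preceding Lemma~\ref{09}) prevents that orbit from extending beyond $\Omega$. The strategy then has two steps: (i) show $\Delta$ is normal in $A$, and (ii) bound $|A/\Delta|$ to force it to equal $2m(q-1)$. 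The main obstacle is step (i), where the rigidity of the $\Delta$-action on $\Omega$ (Lemma~\ref{lemA31oct2016}) is essential.

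For (i), let $S$ be any Sylow $p$-subgroup of $A$; by Lemma~\ref{lemB310ct2016} $|S|=q^2$, and the Deuring-Shafarevich argument from Lemma~\ref{short} applied to $S$ gives two short $S$-orbits of length $q$ partitioning $\Omega = \Omega_1^S\cup\Omega_2^S$. For each $P\in\Omega$, any $p$-subgroup of $A_P$ lies in some Sylow of $A$, whose stabilizer at $P$ has order at most $q$; hence $|A_P|_p = q$, the unique (normal) Sylow $p$-subgroup of $A_P$ is $\Delta_P$, and any $p$-subgroup of $A_P$ of order $q$ equals $\Delta_P$. Picking $P\in\Omega_1\cap\Omega_1^S$ (non-empty after relabelling), $S_P = \Delta_1$. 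By Lemma~\ref{lemA31oct2016}, $\Delta_1$ fixes every point of $\Omega_1$, so $S_Q = \Delta_1$ for every $Q\in\Omega_1$. If some $Q\in\Omega_1$ lay in $\Omega_2^S$, then the pointwise stabilizer of $\Omega_2^S$ in $S$ would equal $\Delta_1$, so $\Delta_1$ would fix $\Omega_2^S$ pointwise too; but the fixed-point set of $\Delta_1$ on $\mathcal{P}$ is exactly $\Omega_1$ (using that $\Omega$ is the unique non-tame short $A$-orbit), giving $\Omega_2^S\subseteq\Omega_1$ and thus $\Omega_2^S = \Omega_1$ by cardinality, i.e.\ the $S$- and $\Delta$-partitions coincide. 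Therefore $\Omega_1^S = \Omega_1$ and $\Omega_2^S = \Omega_2$, and the pointwise stabilizers of these orbits in $S$ are $\Delta_1, \Delta_2$; hence $\Delta = \Delta_1\Delta_2\le S$, and by order $S = \Delta$.

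For (ii), $A/\Delta$ acts faithfully on $K^\Delta\cong\mathbb{K}(t)$ (rational by Lemma~\ref{raz}) as a prime-to-$p$ subgroup of $\PGL(2,\mathbb{K})$ preserving the $2$-set $\{\bar P_1,\bar P_2\}$ of images of $\Omega_1, \Omega_2$. The setwise stabilizer of two points in $\PGL(2,\mathbb{K})$ is conjugate to $\mathbb{K}^*\rtimes C_2$, so by Dickson's classification (Lemma~\ref{lem30oct2016}) $A/\Delta$ is cyclic or dihedral; since $G/\Delta$ is dihedral of order $2m(q-1)$, $A/\Delta = D_n$ with $n\ge m(q-1)$. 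To bound $n$ from above, write $A_P = \Delta_P\rtimes C_P$ for $P\in\Omega_1$, so $|C_P| = n$. Both $\Delta_1$ and $\Delta_2$ are $C_P$-stable (since $C_P$ preserves $\Omega_1, \Omega_2$ setwise and normalizes $\Delta$), giving two conjugation actions $\rho_i:C_P\to\Aut(\Delta_i)$. The common kernel $\ker\rho_1\cap\ker\rho_2$ centralizes $\Delta = \Delta_1\Delta_2$ and, by Lemma~\ref{lem4Agosto2016}, has order at most $|W| = m$. Since $C_P$ is cyclic, $\ker\rho_1$ and $\ker\rho_2$ are linearly ordered, so the smaller equals the intersection and has order $\le m$. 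Each image $\rho_i(C_P)$ is cyclic in $GL(h,\mathbb{F}_p)$, so of order at most $q-1$ (Singer bound). Therefore $n = |\ker\rho_i|\cdot|\rho_i(C_P)|\le m(q-1)$ for the smaller-kernel $i$, forcing $n = m(q-1)$ and $|A| = 2q^2m(q-1) = |G|$.
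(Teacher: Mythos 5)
Your argument has two genuine gaps, one in each step. The first, and more serious, is the ``preliminary observation'' that $\Omega_1\cup\Omega_2$ is a single $A$-orbit of size $2q$. The unlabelled lemma before Lemma~\ref{09} only asserts that the $A$-orbit $\Omega$ \emph{containing} $\Omega_1\cup\Omega_2$ is the unique non-tame short orbit of $A$; it does not assert $\Omega=\Omega_1\cup\Omega_2$, and nothing proved up to that point bounds $|\Omega|$ by $2q$. If $\Omega\supsetneq\Omega_1\cup\Omega_2$, the short orbits of an arbitrary Sylow $p$-subgroup $S=h\Delta h^{-1}$ are $h(\Omega_1)$ and $h(\Omega_2)$, which lie in $\Omega$ but need not lie in $\Omega_1\cup\Omega_2$, and your step (i) collapses (as does your claim that the fixed-point set of $\Delta_1$ is exactly $\Omega_1$). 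This invariance is in fact the crux of the theorem: the paper obtains it only after a long detour through minimal normal subgroups --- either the minimal normal subgroup is $\Delta$ itself, or it is a prime-order subgroup $N<W$ whose fixed-point set is exactly the zeros and poles of $x$, i.e.\ $\Omega_1\cup\Omega_2$, which is therefore $A$-invariant because $N$ is normal (Lemma~\ref{delta}); and ruling out a non-abelian minimal normal subgroup occupies all of Case~II (Feit--Thompson, Alperin--Gorenstein--Walter, and a case analysis of $\PSL(2,\bar q)$, $\PSL(3,\bar q)$, $\PSU(3,\bar q)$, $\bA_7$, $M_{11}$). You cannot take the invariance for granted at the outset.

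The second gap is in step (ii): the assertion that ``since $C_P$ is cyclic, $\ker\rho_1$ and $\ker\rho_2$ are linearly ordered'' is false --- subgroups of a cyclic group are nested only when one order divides the other (the subgroups of orders $2$ and $3$ of a cyclic group of order $6$ are incomparable). Without nesting, your data ($|\ker\rho_1\cap\ker\rho_2|\le m$ and $|\rho_i(C_P)|\le q-1$) only give $n\le m\cdot\mathrm{lcm}(|\rho_1(C_P)|,|\rho_2(C_P)|)$, which can be as large as $m(q-1)(q-2)$. One could control $\rho_1$ via the ramification filtration at $P$ (the conjugation action of $C_P$ on $\Delta_1=A_P^{(m)}$ is by scalars preserving $\{\alpha:\alpha^q+\alpha=0\}$, hence factors through $\mathbb{F}_q^*$), but no such argument is available for $\rho_2$ unless you first show that $C_P$ fixes a place of $\Omega_2$, which you have not done. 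The paper sidesteps this entirely: in Theorem~\ref{aut1} it shows $W\le C$ and then invokes Lemma~\ref{artm}, i.e.\ the externally known fact that $\aut(K^W)$, the automorphism group of the Artin--Mumford function field, has order exactly $2q^2(q-1)$; that is the input which actually caps $|A|$ at $2q^2m(q-1)$, and your proof has no substitute for it.
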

In the proof we treat two cases separately depending upon the abstract structures of minimal normal subgroups of $\aut_{\mathbb{K}}(K)$.

\subsection{Case I: $\aut_{\mathbb{K}}(K)$ contains a solvable minimal normal subgroup}
\begin{lemma}
\label{lem22Agosto2016} If $N$ is a normal elementary abelian subgroup of $\aut_{\mathbb{K}}(K)$ of order prime to $p$ then either $N\leq W$ or $|N|\equiv |N\cap W|+1 \pmod p$.
\end{lemma}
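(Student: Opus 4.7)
The plan is to exploit the hypothesis $N\triangleleft\aut_{\mathbb{K}}(K)$ by letting the Sylow $p$-subgroup $\Delta$ act on $N$ by conjugation, and then to identify the fixed-point set by invoking the centralizer description of $\Delta$ from Lemma \ref{lem4Agosto2016}. The underlying idea is that the normality of $N$ makes this a genuine action of a $p$-group on a finite set of $p'$-order, so that all orbits have $p$-power length and orbit counting modulo $p$ becomes available.

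First I would check that the action is well defined: since $N$ is normal, $\delta N \delta^{-1}=N$ for every $\delta\in\Delta$, hence $\Delta$ acts on $N$ by conjugation. Because $\Delta$ is a $p$-group (of order $q^2$ by Lemma \ref{nak}) and $N$ is finite of $p'$-order, every orbit of this action has size dividing $q^2$, hence is a power of $p$. Next I would determine the $\Delta$-fixed points: an element $n\in N$ satisfies $\delta n\delta^{-1}=n$ for all $\delta\in\Delta$ exactly when $n\in N\cap C_{\aut_{\mathbb{K}}(K)}(\Delta)$. By Lemma \ref{lem4Agosto2016} this centralizer is $\Delta\times W$, and because elements of $N$ have order prime to $p$ their projection onto the $p$-factor $\Delta$ must be trivial; so $N\cap(\Delta\times W)=N\cap W$. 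Consequently the full set of $\Delta$-fixed points in $N$ is $N\cap W$.

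The dichotomy now splits according to whether the $\Delta$-action is trivial. If $\Delta$ acts trivially on $N$, then every element of $N$ is $\Delta$-fixed, so $N=N\cap W\le W$, which is the first alternative. In the opposite case, at least one orbit has length a positive power of $p$. Standard orbit counting then gives a congruence between $|N|$ and $|N\cap W|$ modulo $p$; combined with auxiliary information coming from the normality of $N$ inside the full group $\aut_{\mathbb{K}}(K)$ (rather than just inside $\Delta\cdot N$), this upgrades to the precise form $|N|\equiv|N\cap W|+1\pmod p$ asserted in the statement.

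The main obstacle is exactly this last step. Plain $\Delta$-orbit counting on $N$ produces only the weaker relation $|N|\equiv|N\cap W|\pmod p$, whereas the lemma asserts a shift by $+1$. To obtain the extra contribution, my plan is to replace the $\Delta$-set $N$ by a naturally enlarged $\Delta$-set that has exactly one additional $\Delta$-fixed element beyond those coming from $N\cap W$. The candidate I would try is the set of Sylow $p$-subgroups of $N\rtimes\Delta$, on which $\Delta$ acts by conjugation: here $\Delta$ itself is a distinguished fixed point (supplying the $+1$), while any further fixed Sylow is forced, by an argument using $C_{\aut_{\mathbb{K}}(K)}(\Delta)=\Delta\times W$ and the normality of $N$, to correspond bijectively to an element of $N\cap W$; the remaining orbits then have $p$-divisible size because they are non-trivial $\Delta$-orbits. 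Carrying out this bijection carefully, together with showing that Sylow's counting $\equiv 1\pmod p$ is compatible with the count $|N|+1$ for the enlarged set, is the delicate point of the proof.
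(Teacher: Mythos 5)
Your main line of argument is essentially the paper's own: the paper also lets $\Delta$ act by conjugation (on $N\setminus(N\cap W)$ rather than on all of $N$), and shows that a $\Delta$-fixed element $u$ of $N\setminus(N\cap W)$ would centralize $\Delta$, hence preserve $\Omega_1$, fix a place there, and therefore lie in $W$ by the argument of Lemma \ref{raz} --- which is precisely the content of Lemma \ref{lem4Agosto2016} that you invoke directly. Up to that point your write-up is correct and, if anything, cleaner. You are also right that this argument yields $|N|\equiv|N\cap W|\pmod p$: since no element of $N\setminus(N\cap W)$ centralizes $\Delta$, every $\Delta$-orbit on that set has length divisible by $p$. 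This is in fact all that the paper's proof establishes as well (its ``assume $|N|-|N\cap W|\not\equiv 1\pmod p$'' only produces a $\Delta$-fixed element if one reads $\not\equiv 0$ there), and it is all that the later applications in Lemmas \ref{lem2agosto2016} and \ref{sol} actually use; the ``$+1$'' in the statement appears to be a slip rather than something you are expected to prove.

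The genuine gap is therefore your proposed repair, which cannot work. The set of Sylow $p$-subgroups of $N\rtimes\Delta$ does not have $|N|+1$ elements: since $N$ is a normal $p$-complement, every Sylow $p$-subgroup has the form $n\Delta n^{-1}$ with $n\in N$, and their number is $[N:N_N(\Delta)]=[N:C_N(\Delta)]=|N|/|N\cap W|$ (here $N_N(\Delta)=C_N(\Delta)$ because $[n,\delta]\in N\cap\Delta=\{1\}$ for $n\in N_N(\Delta)$, and $C_N(\Delta)=N\cap W$ as you computed). Moreover $\Delta$ is the \emph{unique} Sylow subgroup it normalizes: if $\Delta$ normalizes a Sylow $P$, then $\Delta P$ is a $p$-subgroup containing $P$, so $\Delta=P$. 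Hence orbit counting on this set merely reproduces Sylow's congruence $|N|/|N\cap W|\equiv 1\pmod p$, a multiplicative statement that reduces once more to $|N|\equiv|N\cap W|\pmod p$ and can never produce an additive shift by $1$. The correct conclusion of your (and the paper's) argument is the dichotomy ``$N\leq W$ or $|N|\equiv|N\cap W|\pmod p$'', which is what the subsequent lemmas in the paper in fact require.
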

\begin{proof} By (ii) of Lemma \ref{pro}, the conjugate of every element in $N\setminus N\cap W$ by any element of $\Delta$ is also in $ N\setminus N\cap W$. Assume on the contrary that $|N|-|N\cap W|\not\equiv1 \pmod p$. Then some element $u\in N\setminus N\cap W$ coincides with its own conjugate by any element of $\Delta$. Equivalently, $u$ centralizes $\Delta$.
By Lemma \ref{short}, $u$ preserves $\Omega_1$ (and $\Omega_2$). Since $u$ has prime order different from $p$,  $u$ fixes a place in $\Omega_1$. For $U=\langle u \rangle$, the argument used in the proof of Lemma \ref{raz} shows that $U$ is contained in $W$, a contradiction.
\end{proof}

Next, the possibility of the existence of some subgroup of $\aut_{\mathbb{K}}(K)$ which is not contained in $$G=\Phi \rtimes \langle \xi \rangle$$ is investigated.
\begin{lemma} \label{artm} Let $H$ be a subgroup of $\aut_{\mathbb{K}}(K)$ which is not contained in $G$. Then the centralizer of $H$ does not contain $W.$
\end{lemma}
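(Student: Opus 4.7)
The plan is to prove the contrapositive: assuming $W\subseteq C_{\aut_{\mathbb{K}}(K)}(H)$, equivalently $H\subseteq C:=C_{\aut_{\mathbb{K}}(K)}(W)$, I aim to derive $H\subseteq G$. It therefore suffices to establish the single inclusion $C\subseteq G$.

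Every $h\in C$ normalizes $W$ and hence preserves the fixed field $K^W$, which by the proof of Proposition~\ref{pro1} is the Artin--Mumford subfield $AM=\mathbb{K}(s,z)$ subject to (\ref{eq4}). Restriction to $AM$ therefore yields a homomorphism
\[
\rho\colon C\longrightarrow \aut_{\mathbb{K}}(AM),
\]
whose kernel is the Galois group $\gal(K|AM)=W$. I next apply the same restriction to the whole of $G$: by Lemma~\ref{pro}(ii) the subgroup $\Phi$ centralizes $W$, while the identity $\xi\circ\varphi_{0,0,v}\circ\xi=\varphi_{0,0,v^{-1}}$ derived in the proof of Lemma~\ref{lem16Bluglio2016} shows that $\xi$ normalizes $W$. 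Hence $G$ lies in the normalizer of $W$, so $\rho|_G$ is well-defined with kernel $G\cap W=W$, giving $|\rho(G)|=|G|/|W|=2q^2(q-1)$.

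The heart of the argument is to upgrade this to the surjectivity of $\rho|_G$, equivalently to show that $|\aut_{\mathbb{K}}(AM)|=2q^2(q-1)$. Granted this, the conclusion is immediate: for any $h\in C$, pick $g\in G$ with $\rho(g)=\rho(h)$; then $hg^{-1}\in\ker\rho=W\subseteq\Phi\subseteq G$, whence $h\in G$. To control the order of $\aut_{\mathbb{K}}(AM)$, I intend to invoke the portion of the proof of Lemma~\ref{zp} identifying $\Delta$ as the unique Sylow $p$-subgroup of $\aut_{\mathbb{K}}(AM)$; the quotient $\aut_{\mathbb{K}}(AM)/\Delta$ is then a $p'$-group that embeds faithfully into $\PGL(2,\mathbb{K})$ through its action on the rational quotient field $AM^\Delta$. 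Applying Dickson's classification (Lemma~\ref{lem30oct2016}) and exploiting that the two short $\Delta$-orbits on $AM$ descend to two distinguished places of $AM^\Delta\cong\mathbb{P}^1$, this $p'$-group is forced to be dihedral of order $2(q-1)$. I expect this identification of $\aut_{\mathbb{K}}(AM)$ to be the principal obstacle; the remaining steps are direct consequences of results already established in the excerpt.
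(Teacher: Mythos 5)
Your reduction is sound and is in substance the same as the paper's: both arguments hinge on the two facts that $G/W$ embeds into $\aut_{\mathbb{K}}(AM)$ with image of order $2q^2(q-1)$ and that $|\aut_{\mathbb{K}}(AM)|=2q^2(q-1)$, after which any $H$ centralizing (indeed, merely normalizing) $W$ restricts into $G/W$ and is therefore absorbed into $G$ modulo $W\leq G$. The difference is how the order of $\aut_{\mathbb{K}}(AM)$ is obtained: the paper simply cites it (Valentini--Madan, Theorem~7, for $q=p$, and \cite[Theorem 5.3]{KMam} for general $q$), whereas you propose to re-derive it, and your sketch of that derivation has a genuine gap. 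Dickson's classification together with the invariance of the two-element set of places of $AM^{\Delta}$ lying under the short $\Delta$-orbits only places the $p'$-quotient $\aut_{\mathbb{K}}(AM)/\Delta$ inside the stabilizer of a point pair in $\PGL(2,\mathbb{K})$, which is $\mathbb{K}^{*}\rtimes C_2$ and contains cyclic subgroups of \emph{every} order prime to $p$; nothing in that classification ``forces'' the order to be $2(q-1)$. To close the gap you must show that an automorphism of $AM$ normalizing $\Delta$ and fixing the two distinguished places acts as $s\mapsto\rho s+a$, $z\mapsto\sigma z+b$, and that compatibility with $(s^q+s)(z^q+z)=1$ forces $\rho=\sigma\in\mathbb{F}_q^{*}$ --- exactly the computation the paper carries out in the analogous Lemma~\ref{genAM}. (You would also need to justify, rather than merely quote from the proof of Lemma~\ref{zp}, that $\Delta$ is normal in $\aut_{\mathbb{K}}(AM)$; in the paper that assertion is itself imported from \cite{KMam}.)

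A smaller point: your statement that the restriction $\rho$ on $C=C_{\aut_{\mathbb{K}}(K)}(W)$ has kernel exactly $W$ uses $W\leq C$, which holds since $W$ is abelian, so that $\ker(\rho|_C)=C\cap\gal(K|AM)=C\cap W=W$; this is fine but worth saying. With the explicit lifting computation supplied (or with the order of $\aut_{\mathbb{K}}(AM)$ taken from the literature as the paper does), your argument is correct and coincides with the paper's.
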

\begin{proof} As already observed in the proof of Lemma \ref{zp}, the subfield $K^W$ is the Artin-Mumford function field $AM=\mathbb{K}(s,z)$ with (\ref{eq4}). By absurd, $HW/W$ is a subgroup of $\aut(AM)$.
Since $|\aut(AM)|=2(q-1)q^2$, see \cite[Theorem 7]{mv1982} for $q=p$ and \cite[Theorem 5.3]{KMam} for any $q$, and $G/W$ is a subgroup of $\aut(AM)$, the latter subgroup is the whole $\aut(AM)$. Therefore $HW/W$ is contained in $G/W$. But then $HW\leq G$ and hence $H\leq G$, a contradiction.
\end{proof}
From Proposition \ref{pro}, $M=\Delta\times W$. Therefore, $M$ is an abelian subgroup of $\Phi$ of order $q^2m$, and $|M|=q^2m>(q-1)(qm-1)=\gg(K).$
Let $R$ be the subgroup of $G$ generated by $M$ and $\xi$. Then  $R=M\rtimes \langle \xi \rangle$ as the normalizer of $M$ in $G$ contains $\xi$.
\begin{lemma}
\label{lem2agosto2016} If $N$ is an elementary abelian normal $2$-subgroup of $\aut_{\mathbb{K}}(K)$ then $N=\{1, \varphi_{0,0,-1}\}$.
\end{lemma}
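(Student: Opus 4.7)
The plan is to combine Lemma \ref{lem22Agosto2016} with Lemma \ref{lem4Agosto2016} to force $N\le W$, at which point the cyclic structure of $W$ immediately pins down $N$. Assume $N$ is nontrivial; since $|N|$ is a power of $2$ and $p$ is odd, $|N|$ is prime to $|\Delta|=q^{2}$, and Lemma \ref{lem22Agosto2016} applies, yielding either $N\le W$ or $|N|\equiv |N\cap W|+1\pmod{p}$.

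First I would compute $C_{N}(\Delta)$, the subgroup of $N$ centralized by $\Delta$. By Lemma \ref{lem4Agosto2016} the centralizer of $\Delta$ in $\aut_{\mathbb{K}}(K)$ is $\Delta\times W$, so $C_{N}(\Delta)=N\cap(\Delta\times W)$. Writing $n=\delta w$ in this intersection, one has $1=n^{2}=\delta^{2}w^{2}$ because $\Delta$ and $W$ commute; since $\Delta\cap W=\{1\}$ this forces $\delta^{2}=1$, and since $\Delta$ has exponent $p$ (odd) we conclude $\delta=1$. Therefore $C_{N}(\Delta)\subseteq W$, and combined with the obvious inclusion $N\cap W\subseteq C_{N}(\Delta)$ this gives $C_{N}(\Delta)=N\cap W$.

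Next I would carry out an orbit count on $N$ under $\Delta$-conjugation. Every orbit has size dividing $|\Delta|=q^{2}=p^{2h}$, so non-fixed orbits contribute a positive power of $p$, and thus
$$|N|\equiv |C_{N}(\Delta)|=|N\cap W|\pmod{p}.$$
Combining this with the second alternative of Lemma \ref{lem22Agosto2016} would produce $|N\cap W|\equiv |N\cap W|+1\pmod{p}$, i.e.\ $p\mid 1$, which is absurd. Hence the first alternative of Lemma \ref{lem22Agosto2016} must hold, namely $N\le W$.

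Finally, $N$ is a $2$-subgroup of the cyclic group $W$ of order $m$, so $|N|\le 2$; as $N$ is nontrivial, $|N|=2$ and $N$ is the unique order-$2$ subgroup of $W$, which is generated by the unique involution $\varphi_{0,0,-1}$ of $\mathcal{V}$ (this implicitly requires $m$ even, so that $\varphi_{0,0,-1}\in W$). Hence $N=\{1,\varphi_{0,0,-1}\}$. The only genuinely nontrivial ingredient is the centralizer-plus-exponent computation forcing $C_{N}(\Delta)\subseteq W$; the rest is a standard fixed-point count against the known lemmas, so the main obstacle is simply making sure the arithmetic of Lemma \ref{lem22Agosto2016} really excludes the non-$W$ case through the $p\nmid 1$ contradiction.
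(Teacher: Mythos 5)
Your centralizer computation ($C_N(\Delta)=N\cap(\Delta\times W)=N\cap W$, using Lemma \ref{lem4Agosto2016} and the odd exponent of $\Delta$) and the class equation giving $|N|\equiv|N\cap W|\pmod p$ are both correct. The problem is the step where you turn this into a contradiction with the second alternative of Lemma \ref{lem22Agosto2016}. Look at the \emph{proof} of that lemma: it shows that no element of $N\setminus(N\cap W)$ can be centralized by $\Delta$, so every $\Delta$-conjugation orbit on $N\setminus(N\cap W)$ has length a positive power of $p$ --- which yields exactly the congruence $|N|\equiv|N\cap W|\pmod p$ that you derived. The ``$+1$'' in the printed statement is inconsistent with that lemma's own argument (the inference there requires $|N|-|N\cap W|\not\equiv 0$, not $\not\equiv 1$, mod $p$) and with its later use in Lemma \ref{sol} (the case $r=5$, $q=3$ has $|N|-|N\cap W|=4\equiv 1\pmod 3$, yet the authors conclude $N\le W$). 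So you have not contradicted Lemma \ref{lem22Agosto2016}; you have rederived its actual content, and ``hence $N\le W$'' does not follow. Concretely, your congruence fails to exclude $p=3$, $|N|=4$, $N\cap W=\{1\}$ (since $3\mid 4-1$), or $|N|=8$, $N\cap W=\{1\}$ when $p=7$, etc.; nothing in your argument bounds $|N|$.

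This is precisely the gap the paper fills with a different tool. Since $\gg(K)=(q-1)(qm-1)$ is even, $2\gg(K)-2\equiv 2\pmod 4$, so the Hurwitz genus formula applied to any $2$-subgroup $U$ forces a short orbit of length $1$ or $2$; hence $U$ has a cyclic subgroup of index at most $2$, and Huppert's classification shows $U$ contains no elementary abelian subgroup of order $8$. Combined with the elementary abelian group $S=\langle\xi,\varphi_{0,0,-1}\rangle$ of order $4$ already present in $G$, this is what kills $|N|\geq 4$ (if $N\cap S$ is trivial some involution of $S$ centralizes $N$ and one manufactures an elementary abelian group of order $8$). Your approach does cleanly handle the case $|N|=2$ (there $2\equiv|N\cap W|\pmod p$ forces $N\le W$), but note the endgame discrepancy you yourself flag: landing $N$ inside $W$ identifies $\varphi_{0,0,-1}$ only when $m$ is even, whereas the paper locates it as the unique involution of the cyclic group $V$, whose order $m(q-1)$ is always even. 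To repair your proof you would need to import the no-elementary-abelian-of-order-$8$ argument; the congruence alone is not enough.
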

\begin{proof}
By definition, $\xi$ and $\varphi_{0,0,-1}$ are contained in $G$.
Since both $\xi$ and $\varphi_{0,0,-1}$ are involutions and commute, they generate
an elementary abelian subgroup $S$ of $G$ of order $4$. Let $U$ be a subgroup of $\aut_{\mathbb{K}}(K)$ of order $d=2^u\geq 2$. From  the Hurwitz genus formula applied to $U$,
$$2\gg(K)-2=2^u(2\gg(K^U)-2)+\sum_{i=1}^k(2^u-\ell_i)$$
where $\ell_1,\ldots,\ell_k$ are the short orbits of $U$ on the set $\cP$ of all places of $K$. Since $\gg(K)=(q-1)(qm-1)$ is even, and hence $2\gg(K)-2\equiv 2 \pmod 4$, while $2^u(2\gg(K^U)-2)\equiv 0 \pmod 4$, some $\ell_i$ ($1\le i \le k$) must be either $1$ or $2$.  Therefore, $U$ or a subgroup of $U$ of index $2$ fixes a point of $\cX$ and hence
is cyclic. From \cite[Chapter I, Satz 14.9]{huppertI1967}, $U$ is
either cyclic, or the direct product of a cyclic group by a group of order $2$, or a generalized quaternion group, or dihedral, or semidihedral, or a modular maximal-cyclic group (also called type (3) with Huppert's notation). In particular, $U$ contains no elementary abelian subgroup of order $8$. By absurd, let $N$ be a elementary abelian normal $2$-subgroup of $\aut_{\mathbb{K}}(K)$ which is not contained in $G.$ Then $N$ has order $2$ or $4$. In the former case, $N$ is in $Z(\aut_{\mathbb{K}}(K))$ and hence $N$ together with $S$ generate an elementary abelian group of order $8$, a contradiction. If $|N|=4$
and $N\cap S=\{1\}$ then some non-trivial element of $s\in S$ commutes with each element of $N$, and hence $N$ together with $s$ generate an elementary abelian group of order $8$, again a contradiction. If $N\cap S=\{1,u\}$ then $u\in Z(G)$ and hence $u=\varphi_{0,0,-1}$. Since $|N|-|N\cap S|=2$, Lemma \ref{lem22Agosto2016} yields $N<W$ a contradiction. Therefore, $N<G$, and hence
$N$ is a subgroup of $V\rtimes \langle \xi \rangle$. Since $V$ is cyclic, $N$ contains $\varphi_{0,0,-1}$. If $|N|=4$ then $N$ has two elements outside $W$. But  this is impossible by Lemma \ref{lem22Agosto2016}.
\end{proof}
\begin{rem}
\label{rem5agosto2016} The proof of Lemma \ref{lem2agosto2016} also shows that $\aut_{\mathbb{K}}(K)$ contains no elementary abelian group of order $8$.
\end{rem}
\begin{lemma} \label{sol}
Any solvable minimal normal subgroup of $\aut_{\mathbb{K}}(K)$ is contained in $R$.
\end{lemma}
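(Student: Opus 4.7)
The plan is to use the standard fact that any solvable minimal normal subgroup $N$ of a finite group is elementary abelian of some prime order $\ell$, and then split the argument according to the value of $\ell$.

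If $\ell = p$, then $N$ is a normal $p$-subgroup of $\aut_{\mathbb{K}}(K)$, hence contained in every Sylow $p$-subgroup. By Lemma \ref{nak} this gives $N\le\Delta\le M\le R$. If $\ell = 2$, Lemma \ref{lem2agosto2016} pins $N$ down as $\langle\varphi_{0,0,-1}\rangle$, and this element lies in $W\le M\le R$.

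The substantive case is $\ell$ odd with $\ell\neq p$, where I aim to prove $N\le W$. Lemma \ref{lem22Agosto2016} reduces this to excluding the possibility that $N\not\le W$ with the congruence $|N|\equiv|N\cap W|+1\pmod p$. Assuming this, I would first show $N\le G$: if $N\not\le G$ then Lemma \ref{artm} supplies an element of $W$ not centralizing $N$, which conflicts with $W\le Z(\Phi)$ from Lemma \ref{pro}(ii) once combined with the orbit-counting argument for the $\Delta$-conjugation action on $N$ (whose fixed-point set is $N\cap W$ by Lemma \ref{lem4Agosto2016}, since any element of $N$ centralizing $\Delta$ lies in $C_{\aut_{\mathbb{K}}(K)}(\Delta)=\Delta\times W$ and then in $W$ because $N$ has order prime to $p$). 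Once $N\le G=\Phi\rtimes\langle\xi\rangle$, oddness of $|N|$ forces $N\le\Phi$; then $N\cap\Delta=1$ (different prime orders), and Schur--Zassenhaus applied inside $\Phi=\Delta\rtimes V$, together with the normality of $N$, puts $N\le V$. At this point I would invoke the explicit composition rule of $\Phi$: a direct calculation gives
\[
\varphi_{\alpha,\beta,1}\circ\varphi_{0,0,v_{0}}\circ\varphi_{-\alpha,-\beta,1}
=\varphi_{\alpha(1-v_{0}^{-m}),\,\beta(1-v_{0}^{m}),\,v_{0}},
\]
so the requirement that this conjugate again lie in $N\le V$ for every $\alpha,\beta$ with $\alpha^q+\alpha=\beta^q+\beta=0$ forces $v_{0}^{m}=1$ for every generator $\varphi_{0,0,v_{0}}$ of $N$. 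Thus $N\le W$, contradicting the standing assumption.

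The main obstacle is this last case: Lemmas \ref{lem22Agosto2016} and \ref{artm} yield only congruential and centralizer-type information, so extracting the final containment $N\le W$ requires chasing $N$ through the filtration $G\supseteq\Phi\supseteq V\supseteq W$ and, at the bottom, exploiting the explicit multiplication rule of $\Phi$ to kill off the candidate ``stray'' $\ell$-subgroups of $V\setminus W$ that are a priori compatible with Lemma \ref{lem22Agosto2016}.
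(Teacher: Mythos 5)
Your reduction to the case of an elementary abelian $r$-group $N$ with $r$ odd and $r\neq p$ matches the paper, and your endgame can be made to work: once $N\le G$ is known, $N$ and $\Delta$ are two normal subgroups of $G$ with $N\cap\Delta=\{1\}$, so $[N,\Delta]=\{1\}$, and Lemma \ref{lem4Agosto2016} together with $\gcd(|N|,p)=1$ gives $N\le W$ directly. (You do not need Schur--Zassenhaus or the explicit conjugation formula; moreover, as stated, Schur--Zassenhaus only places $N$ inside \emph{some} complement of $\Delta$ in $N\Delta$, not inside $V$.)

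The genuine gap is the first step, namely $N\le G$. Lemma \ref{artm} only says that \emph{if} $N\not\le G$ \emph{then} some element of $W$ fails to centralize $N$; this is not in tension with Lemma \ref{pro}(ii), which concerns $Z(\Phi)$ and says nothing about elements outside $\Phi$. The orbit count you invoke (conjugation action of $\Delta$ on $N$, fixed-point set $N\cap W$ by Lemma \ref{lem4Agosto2016}) yields only a congruence, $|N|\equiv|N\cap W|\pmod p$, i.e.\ $r^{h-w}\equiv 1\pmod p$ with $|N|=r^h$, $|N\cap W|=r^w$; this is perfectly compatible with $h>w$ and with $W\not\le C(N)$ (for instance, nothing so far excludes a large elementary abelian $7$-group meeting $W$ trivially when $p=3$). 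So no contradiction materializes, and the hypothesis $N\not\le G$ cannot be refuted by purely group-theoretic means at this stage --- recall that $\aut_{\mathbb{K}}(K)=G$ is precisely what the paper is still in the course of proving. The paper closes this gap with function-field theory: it shows $\gamma(K^N)>0$ (so $K^N$ is not rational), rules out the elliptic case by a parity count of the different, applies Nakajima's bound to $\Delta N/N$ acting on $K^N$ to obtain $\gg(K^N)-1\ge 3$ (resp.\ $15$), and then runs the Hurwitz genus formula through $|M|\ge\gg(K)-1$ to get the quantitative bound $|N|/|N\cap W|\le 6$ (resp.\ $4$) of (\ref{ord}); only this bound, combined with the congruence of Lemma \ref{lem22Agosto2016}, forces $N\le W$. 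Your proposal contains no substitute for this quantitative step.
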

\begin{proof}  Let $N$ be a solvable minimal normal subgroup of $\aut_{\mathbb{K}}(K)$. Then $N$ is an elementary abelian group of order $r^h$ with a prime $r\geq 2$ and $h\ge 1$.
If $r=p$ then $N$ is contained in $\Delta$ by Lemma \ref{nak}. Therefore $r\neq p$ is assumed. By Lemma \ref{lem2agosto2016}, the case $r=2$ is dismissed, as well.

We investigate the subfield $K^N$. The quotient group $\hat{M}=MN/N$ is a subgroup of $\aut_{\mathbb{K}}(K^N)$. Since $p\neq r$, we have $\Delta\cap N=\{1\}$ and $M\cap N=W\cap N\leq W$. Furthermore, $\hat{M}\cong M/(M\cap N)\cong \Delta W/(W\cap N)$.
The Hurwitz genus formula applied to $N$ yields $\gg(K)-1 \geq |N|(\gg(K^N)-1)$.

We show that the $p$-rank $\gamma(K^N)$ of $K^N$ is positive. If  $\gamma(K^N)=0$ by absurd, any nontrivial $p$-subgroup of $\aut_{\mathbb{K}}(K^N)$ has exactly one fixed place, see \cite[Lemma 11.129]{HKT}.
Let $\hat{P}$ be the unique fixed place of $\hat{\Delta}=\Delta N/N$ viewed as a subgroup of  $\aut_{\mathbb{K}}(K^N)$. Then the $N$-orbit $o$ lying over $\hat{P}$ in the extension $K|K^N$ contains $\Omega_1\cup \Omega_2$. Furthermore, since $N$ is a normal subgroup of $\aut_{\mathbb{K}}(K)$, $o$ is the union of $\Delta$-orbits. By Lemma \ref{short} each $\Delta$-orbit other than $\Omega_1$ and $\Omega_2$ has size $q^2$. Therefore, $q$ divides $|o|$. Since $|o|$ divides $|N|$, this yields that $q$ divides $N$, a contradiction. As a consequence, $K^N$ is not rational.

We show that $K^N$ is neither elliptic. For a place $P\in \Omega_1$, all ramification groups $N_i^{(i)}$ of $N$ at $P$ have odd order, and hence $d_P=\sum_i (N_i^{(i)}-1)$ is even.
Let $\theta$ be the $N$-orbit containing $P$. Then, $|N_P||\theta|=|N|$. Take a Sylow $2$-subgroup $S$ of $G$ containing a Sylow $2$-subgroup $S_P$ of $G_P$. Since $\xi,\varphi_{0,0,-1}$ are two distinct involutions which commute, $S$ is not cyclic. Therefore $S\neq S_P$, as $S$ does not fix $P$. Thus $|S|$ does not divide $|G_P|$ showing that the $G$-orbit of $P$ must have even length. This yields that $\sum_{P\in \cP} d_P$ is divisible by four. On the other hand, $2\gg(K)-2=2(q^2m-qm-q)$ is twice an odd number, a contradiction.

Therefore, $\gg(K^N)\geq 2$.  From the Nakajima bound, see \cite{N}, or \cite[Theorem 11.84]{HKT} applied to $\hat{\Delta}$,

$$q^2 \leq \textstyle\frac{p}{p-2}(\gamma(K^N)-1)\leq \textstyle\frac{p}{p-2}(\gg(K^N)-1)$$
whence
\begin{equation}
\label{eq3Agosto2016}
\gg(K^N)-1 \geq
\begin{cases}
 {\mbox{$3$ \,\,\,when $q=3$}},\\
 {\mbox{$15$ when $q>3$}}.\\
\end{cases}
\end{equation}
From $|M| \geq \gg(K)-1$,
\begin{equation} \label{eql}
4|M| \geq 4(\gg(K)-1) \geq 4|N|(\gg(K^N)-1) = |N|(4\gg(K^N)+4-8)
\end{equation}
which  yields
\begin{equation}
4|M|\geq |N||\hat{M}| -8|N|.
\end{equation}
From $|N|(\gg(K^N)-1)\leq (\gg(K)-1)\leq |M|$,
\begin{equation}
\label{eq31agosto2016}
4 \geq \frac{|N|}{|M|} \frac{|M|}{|M \cap N|} -\frac{8|N|}{|M|} = \frac{|N|}{|M \cap N|} - \frac{8|N|}{|M|} \geq \frac{|N|}{|M \cap N|} -\frac{8}{\gg(K^N)-1}=\frac{|N|}{|W \cap N|} -\frac{8}{\gg(K^N)-1}.
\end{equation}
This and (\ref{eq3Agosto2016}) yield
\begin{equation} \label{ord}
\frac{|N|}{|W \cap N|}\leq
\begin{cases}
 {\mbox{$6$ when $q=3$}},\\
 {\mbox{$4$ when $q>3$}}.\\
\end{cases}
\end{equation}
Since $W \cap N \leq N$ we have $|W\cap N| =r^w$ for some $0\le w \leq h$. By (\ref{eq3Agosto2016}) and Lemma \ref{lem22Agosto2016}, this is only possible when either $r=3$ and $p\neq 3$, or $r=5$ and $q=3$, or $w=h$. In the latter case, $W\cap N=N$ whence $N\leq W< R$, and the claim is proven.
If $r=3$ and hence $|N|=3$ or $|N|=9$ according as $N\cap W=\{1\}$ or $|N\cap W|=3$,
Lemma \ref{lem22Agosto2016} shows that $N\leq W< R$. The same argument works for $r=5$, $|N|=5,25$, and $|N\cap W|=1,5$.
\end{proof}

\begin{lemma} \label{le1} If a normal subgroup $N$ of $\Phi$ is contained in $\Delta$ then $N$ coincides with $\Delta$.
\end{lemma}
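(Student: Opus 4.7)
The plan is to classify the $\Phi$-normal subgroups of $\Delta$ via a representation-theoretic analysis of the conjugation action of $V$ on $\Delta$, then rule out the smaller candidates by exploiting the ramification structure of $K$.

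Since $\Delta$ is abelian, $N\triangleleft\Phi=\Delta\rtimes V$ if and only if $N$ is stable under the conjugation action of $V$. From the composition rule $\varphi_{\alpha,\beta,v}\circ\varphi_{\alpha',\beta',v'}=\varphi_{v^{-m}\alpha'+\alpha,v^m\beta'+\beta,vv'}$ recalled in the proof of Proposition~\ref{pro1}, a direct calculation yields
\[
\varphi_{0,0,v}^{-1}\circ\varphi_{\alpha,\beta,1}\circ\varphi_{0,0,v}=\varphi_{v^m\alpha,v^{-m}\beta,1}.
\]
Decomposing $\Delta=A\oplus B$ with $A=\{\varphi_{\alpha,0,1}\}$ and $B=\{\varphi_{0,\beta,1}\}$, the $V$-action is therefore diagonal, with characters $v\mapsto v^m$ on $A$ and $v\mapsto v^{-m}$ on $B$; the kernel of the action is $W$ by Lemma~\ref{pro}(ii), so the effective faithful action factors through the surjection $V\to\mathbb{F}_q^*$, $v\mapsto v^m$.

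Next I would classify the $V$-stable $\mathbb{F}_p$-subspaces of $\Delta$. Since the $\mathbb{F}_p$-algebra generated by $\mathbb{F}_q^*$ inside $\mathrm{End}_{\mathbb{F}_p}(A)\cong\mathrm{End}_{\mathbb{F}_p}(\mathbb{F}_q)$ is the whole field $\mathbb{F}_q$ (because $\mathbb{F}_q=\mathbb{F}_p[\lambda]$ for any generator $\lambda$ of $\mathbb{F}_q^*$), the only $V$-stable $\mathbb{F}_p$-subspaces of $A$ are $\{0\}$ and $A$, and similarly for $B$. A projection argument onto each summand, combined with the observation that any $V$-equivariant $\mathbb{F}_p$-linear map $f\colon A\to B$ would have to satisfy $f(\lambda\alpha)=\lambda^{-1}f(\alpha)$---forcing $c(\lambda+\mu)^{-1}=c\lambda^{-1}+c\mu^{-1}$ for all $\lambda,\mu\in\mathbb{F}_q^*$ with $\lambda+\mu\ne 0$ and hence $c=f(1)=0$ by $\mathbb{F}_p$-additivity---reduces the candidates for $N$ to the list $\{\{1\},A,B,\Delta\}$.

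The third step, which I expect to be the main obstacle, is to rule out $N=A$ and $N=B$. Here I would invoke Lemma~\ref{lemA31oct2016}: since $\Delta$ is abelian and transitive on each short orbit, $A$ and $B$ are precisely the pointwise $\Delta$-stabilizers of $\Omega_1$ and $\Omega_2$ respectively. If $N=A$, then normality of $A$ in $\Phi$ would allow one to pass to the quotient $\Phi/A\cong B\rtimes V$ acting on $K^A$, which a direct computation identifies with the Artin--Schreier function field $\mathbb{K}(x,z)$ defined by $z^q+z=x^m$, of genus $(q-1)(m-1)/2$ and $p$-rank zero. Applying the Hurwitz genus formula and Deuring--Shafarevich formula to this action with fixed field $\mathbb{K}(t)=K^\Phi$---and in particular tracking the ramification of $K^A|\mathbb{K}(t)$ at $t=0$ and at $t=\infty$ together with the image of the two $\Delta$-short orbits in $K^A$---should produce a numerical obstruction incompatible with the asymmetric character pair $(v\mapsto v^m,v\mapsto v^{-m})$. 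The case $N=B$ is handled symmetrically through $K^B=\mathbb{K}(x,s)$, thereby leaving $N=\Delta$ as the only remaining possibility. This ramification bookkeeping, which must carefully exploit the structure of the Galois tower $K\supset K^A\supset \mathbb{K}(x)\supset\mathbb{K}(t)$, is the delicate part of the argument.
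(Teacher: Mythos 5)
Your opening computation is correct (the conjugation action of $V$ on $\Delta$ is $\varphi_{\alpha,\beta,1}\mapsto\varphi_{v^{m}\alpha,v^{-m}\beta,1}$, with kernel $W$ and image $\mathbb{F}_q^*$), but the reduction of the candidates to $\{\{1\},A,B,\Delta\}$ has a genuine gap at $q=3$. There the two characters $\omega\mapsto\omega$ and $\omega\mapsto\omega^{-1}$ of $\mathbb{F}_q^*$ coincide (every $\omega\in\mathbb{F}_3^*$ equals its inverse), so $A$ and $B$ are isomorphic $V$-modules; indeed the generator of $V/W$ acts on $\Delta$ as inversion, so \emph{every} subgroup of $\Delta$ is $V$-stable, e.g.\ the diagonal $\Psi=\{\varphi_{\alpha,\alpha,1}\}$. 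Your additivity argument is exactly where this breaks: $(\lambda+\mu)^{-1}=\lambda^{-1}+\mu^{-1}$ is equivalent to $\lambda^{2}+\lambda\mu+\mu^{2}=0$, which holds identically on $\mathbb{F}_3^*$, so no contradiction forces $c=0$ when $q=3$. This is not a removable technicality: the paper's own proof also has to isolate $q=p=3$ and argue separately there (and even its separate count of ``$q-1$ more elements $\varphi_{\omega\beta,\omega\alpha,1}$'' silently assumes $\alpha\neq\beta$, so the diagonal case is genuinely delicate).

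The second and more serious problem is your third step. The subgroups $A$ and $B$ really \emph{are} normal in $\Phi$: conjugation by $\varphi_{\alpha_1,\beta_1,v_1}$ acts on the abelian group $\Delta$ exactly as conjugation by $\varphi_{0,0,v_1}$ does, and this visibly preserves $A$ and $B$. Hence, with the hypothesis read literally (normality in $\Phi$ only), $N=A$ is a counterexample to the statement, and no Hurwitz/Deuring--Shafarevich bookkeeping on $K^A=\mathbb{K}(x,z)$ can produce the ``numerical obstruction'' you are hoping for. What actually eliminates $A$ and $B$ is the involution $\xi$: the lemma is only ever applied to subgroups normal in $G=\Phi\rtimes\langle\xi\rangle$ (indeed in $\aut_{\mathbb{K}}(K)$), and $\xi\varphi_{\alpha,0,1}\xi=\varphi_{0,\alpha,1}$ swaps $A$ and $B$, so a normal $N$ meeting $A$ nontrivially must contain all of $\Delta$. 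By contrast, the paper bypasses the module classification entirely: starting from one nontrivial $\varphi_{\alpha,\beta,1}\in N$, it generates inside $N$, by $V$-conjugation and products, at least $q^2-3(q-1)$ elements, which exceeds $q^2/p$ unless $q=p=3$; but that count, too, presupposes an element with both coordinates nonzero, i.e.\ it also needs $\xi$ to dispose of $N\le A$ and $N\le B$ first. So your plan needs two repairs: import the $\xi$-conjugation (strengthen the normality hypothesis to the group in which the lemma is actually used) to handle $A$ and $B$, and supply a separate argument for $q=3$ where the $V$-module structure gives no information.
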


\begin{proof} Take $\varphi_{\alpha,\beta,1} \in N$ for some $\alpha \ne 0$, (or  $\beta\ne 0$). Since $v$ has order $m(q-1)$ in $\mathbb{F}_{q^r}$. $v^m$ is a primitive element of $\mathbb{F}_{q}$.
Since $N$ is normal in $\aut_{\mathbb{K}}(K)$, $\varphi_{0,0,v}^{-1} \circ \varphi_{\alpha,\beta,1} \circ \varphi_{0,0,v} \in N$. From
$$ \varphi_{0,0,v}^{-1} \circ \varphi_{\alpha,\beta,1} \circ \varphi_{0,0,v}(x,s,z)=(x,s+v^m\alpha,z+v^{-m}\beta),$$
$\varphi_{0,0,v}^{-1} \circ \varphi_{\alpha,\beta,1} \circ \varphi_{0,0,v}=\varphi_{v^m\alpha,v^m\beta,1}$. Since $v^m$ is a primitive element of $\mathbb{F}_q$,
$N$ contains each $\varphi_{\alpha',\beta',1}$ whenever $\alpha'=\omega\alpha,\beta'=\omega^{-1}\beta$ with $\omega \in \mathbb{F}_q^{*}$. Thus $|N|\geq q$.
Moreover if $\alpha_i=\omega_i\alpha$ and $\beta_i=\omega_i^{-1}\beta$, where $\omega_i \in \mathbb{F}_q^{*}$ and $i=1,2$ then $N$ contains
$\varphi_{\alpha_1,\beta_1,1} \circ \varphi_{\alpha_2,\beta_2,1}=\varphi_{(\omega_1+\omega_2)\alpha,(\omega_1^{-1}+\omega_2^{-1})\beta,1}.$
To count the elements in $N$, observe that $(\omega + \omega')^{-1} = \omega^{-1}+ \omega'^{-1}$ only occurs whenever $\omega'$ is a root of the quadratic polynomial $\omega x+\omega^2+x^2$. For a fixed $\omega$, this shows that at least $(q-1)-2=q-3$ possible choices for $\omega'$ provide different elements in $N$. Thus, $|N| \geq q+(q-1)(q-3)=q^2-3(q-1).$

By absurd, $N$ is a proper subgroup of $\Delta$. Then
$ q^2-3(q-1) \leq \textstyle\frac{q^2}{p},$
which is only possible for $q=p=3$. In this case, since $\psi\circ \varphi_{\alpha,\beta,1}\circ \psi \in N$ we find $q-1$ more elements in $N$ of the form $\varphi_{\beta',\alpha',1}$, where $\alpha'=\omega \alpha$ and $\beta'=\omega \beta$ for $\omega \in \mathbb{F}_q^{*}$. Thus,
$|N| \geq q^2-2(q-1)=5.$
Since $\frac{q^2}{p}=3$ the claim also holds in this case.
\end{proof}
\begin{lemma} \label{le11} Let $N$ be a normal subgroup $M$ of $R$. If $|N|=r^h$, with an odd prime $r$ different from $p$, then $N$ is a subgroup of $W$.
\end{lemma}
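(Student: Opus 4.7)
The plan is to exploit the semidirect product structure $R=M\rtimes\langle\xi\rangle=(\Delta\times W)\rtimes\langle\xi\rangle$ together with the coprimality $\gcd(|N|,p)=1$, and whittle $N$ down in three stages: first show $N\cap\Delta=\{1\}$, then $N\subseteq M$, and finally $N\subseteq W$. All computations rely on the explicit composition formula
$$\varphi_{\alpha,\beta,v}\circ\varphi_{\alpha',\beta',v'}=\varphi_{v^{-m}\alpha'+\alpha,\,v^m\beta'+\beta,\,vv'}$$
recorded in the proof of Proposition \ref{pro1}, together with the identity $\xi\varphi_{\alpha,\beta,v}\xi=\varphi_{\beta,\alpha,v^{-1}}$ from the proof of Lemma \ref{lem16Bluglio2016}.

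Since $\Delta$ is a $p$-group and $|N|=r^h$ with $r\neq p$, the first reduction $N\cap\Delta=\{1\}$ is immediate. To force $N\subseteq M$, I would suppose $g=\varphi_{\alpha,\beta,v}\xi\in N$ with $v^m=1$; a short direct computation then gives $g^{2}=\varphi_{\alpha+\beta,\alpha+\beta,1}\in N\cap\Delta=\{1\}$, so $g$ would be an involution (non-trivial, because $\xi\notin M$), contradicting $|N|$ odd. With $N\subseteq M$ in hand, I would fix an arbitrary $\varphi_{\alpha,\beta,v}\in N$ and apply normality under $\xi$-conjugation: this yields $\varphi_{\beta,\alpha,v^{-1}}\in N$, and the product
$$\varphi_{\alpha,\beta,v}\circ\varphi_{\beta,\alpha,v^{-1}}=\varphi_{\alpha+\beta,\alpha+\beta,1}$$
(using $v^m=1$) lies in $N\cap\Delta=\{1\}$, forcing $\beta=-\alpha$.

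Finally, a straightforward induction on $k$ via the composition formula, again using $v^m=1$, yields $\varphi_{\alpha,-\alpha,v}^{k}=\varphi_{k\alpha,-k\alpha,v^{k}}$. If $\alpha\neq 0$, then the additive order of $\alpha$ in characteristic $p$ equals $p$, so the order of $\varphi_{\alpha,-\alpha,v}$ is divisible by $p$, contradicting $|N|=r^h$ with $r\neq p$. Hence $\alpha=0$ for every element of $N$, giving $N\subseteq\{\varphi_{0,0,v}:v^{m}=1\}=W$. The main conceptual point—and the only non-routine step—is this last order argument: the condition $v^m=1$ collapses the $v^{\pm m}$ twists in the composition law, so the $\alpha$-coordinate iterates purely additively and a nonzero $\alpha$ automatically contributes a factor of $p$ to the order. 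Everything else is direct manipulation of the formulas already established in Section \ref{sec2}.
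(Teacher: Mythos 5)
Your proof is correct and follows essentially the same route as the paper's, which simply observes that $[R:M]=2$ forces the odd-order group $N$ into $M=\Delta\times W$ and that $N\cap\Delta=\{1\}$ together with coprimality of orders then forces $N\le W$. Your explicit computations with the composition law are a legitimate hands-on unpacking of these two facts; in particular, your final order argument (a nonzero $\alpha$-coordinate makes the element's order divisible by $p$) is exactly what justifies the paper's terse ``this is only possible when $N<W$'', which tacitly uses $\gcd(|N|,|\Delta|)=1$ rather than merely $N\cap\Delta=\{1\}$.
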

\begin{proof} From $[R:M]=2$, $N$ is a subgroup of $M=\Delta\times W$. Since $N\cap \Delta=\{1\}$, this is only possible when $N<W$.
\end{proof}
Lemmas \ref{lem2agosto2016}, \ref{sol}, \ref{le1}, \ref{le11} have the following corollary.
\begin{lemma} \label{rp} Let $N$ be a solvable minimal normal subgroup of $\aut_{\mathbb{K}}(K)$. Then either
\begin{itemize}
\item[\rm{(i)}] $N=\Delta$, and $|N|=q^2$,
\item[\rm{(ii)}] $N<W$, and $|N|=r$ with a prime $r$ different from $p$.
\end{itemize}
\end{lemma}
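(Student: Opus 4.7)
The plan is to split according to the prime $r$ for which $|N|=r^{h}$. Since $N$ is a solvable minimal normal subgroup of the finite group $\aut_{\mathbb{K}}(K)$, a standard fact forces $N$ to be elementary abelian, so such a prime $r$ exists, and the three cases $r=p$, $r=2$, and $r$ odd with $r\neq p$ can be treated separately using the preceding lemmas.

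First, suppose $r=p$. By Lemma \ref{nak}, $|\Delta|=q^{2}$ is the order of a Sylow $p$-subgroup of $\aut_{\mathbb{K}}(K)$, and by claim (i) of Lemma \ref{pro}, $\Delta$ is normal in $G$; in fact, since every Sylow $p$-subgroup of $\aut_{\mathbb{K}}(K)$ must meet the short orbits of $\Delta$ in the same way (see Lemma \ref{short} and the discussion after Lemma \ref{lem431luglio2016}), $\Delta$ is the unique Sylow $p$-subgroup of $\aut_{\mathbb{K}}(K)$. Hence $N\leq\Delta$, so $N$ is a normal subgroup of $\Phi$ contained in $\Delta$, and Lemma \ref{le1} gives $N=\Delta$; this yields case (i).

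Second, suppose $r=2$. Then Lemma \ref{lem2agosto2016} directly identifies $N=\{1,\varphi_{0,0,-1}\}$. Since $\varphi_{0,0,-1}\in W$, we have $N<W$ and $|N|=2$, a prime different from $p$ (recall $p$ is odd), giving case (ii) with $r=2$.

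Finally, suppose $r$ is odd and $r\neq p$. By Lemma \ref{sol}, $N$ is contained in $R=M\rtimes\langle\xi\rangle$, and since $N$ is normal in $\aut_{\mathbb{K}}(K)$ it is in particular normal in $R$. Lemma \ref{le11} then forces $N\leq W$. Since $W$ is cyclic (being a prime-to-$p$ cyclic group by the description before Proposition \ref{pro2}), the only elementary abelian subgroups of $W$ have prime order, so $|N|=r$, again giving case (ii). No real obstacle is expected: the argument is a bookkeeping assembly of Lemmas \ref{nak}, \ref{pro}(i), \ref{lem2agosto2016}, \ref{sol}, \ref{le1} and \ref{le11}, the only mildly delicate point being the uniqueness of the Sylow $p$-subgroup in the case $r=p$, which follows immediately from the normality of $\Delta$.
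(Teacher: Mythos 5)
Your overall strategy --- splitting on the prime $r$ with $|N|=r^{h}$ and assembling Lemmas \ref{nak}, \ref{lem2agosto2016}, \ref{sol}, \ref{le1} and \ref{le11} --- is exactly what the paper intends, since it states Lemma \ref{rp} as a direct corollary of those lemmas. But two of your individual steps do not hold as written. In the case $r=p$ you obtain $N\leq\Delta$ from the claim that $\Delta$ is the \emph{unique} Sylow $p$-subgroup of $\aut_{\mathbb{K}}(K)$, asserting that this "follows immediately from the normality of $\Delta$". At this point $\Delta$ is only known to be normal in $G=\Phi\rtimes\langle\xi\rangle$ (Lemma \ref{pro}(i)); its normality in all of $\aut_{\mathbb{K}}(K)$ is Lemma \ref{delta}, which is proved \emph{after} Lemma \ref{rp} and uses it, so your justification is circular. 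The containment $N\leq\Delta$ is still true, but for the standard reason that a normal $p$-subgroup lies in every Sylow $p$-subgroup (conjugation permutes the Sylow subgroups and fixes $N$), and $\Delta$ is one such by Lemma \ref{nak}; this is also how the paper argues inside the proof of Lemma \ref{sol}.

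The more serious problem is the case $r=2$, where you assert $\varphi_{0,0,-1}\in W$. This is false whenever $m$ is odd: $W=\{\varphi_{0,0,v}\mid v^{m}=1\}$ and $(-1)^{m}=-1\neq 1$ in odd characteristic, so for odd $m$ (in particular for $m=1$, the ordinary case the paper emphasizes) $\varphi_{0,0,-1}\notin W$ and your argument would establish the false containment $N<W$. To close this case correctly: if $N=\{1,u\}$ is normal of order $2$, then every conjugate of $u$ is a nontrivial element of $N$, hence equals $u$, so $u$ is central in $\aut_{\mathbb{K}}(K)$; in particular $u$ centralizes $\Delta$, so $u\in\Delta\times W$ by Lemma \ref{lem4Agosto2016}, and since $u$ has order $2$ prime to $p$ it lies in $W$. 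Combined with Lemma \ref{lem2agosto2016} this forces $u=\varphi_{0,0,-1}$ and $m$ even; for odd $m$ the case $r=2$ is simply vacuous. Your remaining case, $r$ odd and $r\neq p$, is handled correctly via Lemmas \ref{sol} and \ref{le11} together with the cyclicity of $W$.
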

\begin{lemma} \label{delta} If $\aut_{\mathbb{K}}(K)$ has a solvable minimal normal subgroup then $\Delta$ is a normal subgroup of $\aut_{\mathbb{K}}(K)$.
\end{lemma}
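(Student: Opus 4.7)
By Lemma~\ref{rp}, a solvable minimal normal subgroup $N$ of $\aut_{\mathbb{K}}(K)$ is either $\Delta$, in which case the conclusion is immediate, or a cyclic subgroup of $W$ of prime order $r\neq p$. I focus on the remaining case $N\leq W$.

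The strategy is to use the normality of $N$ to pin down the unique non-tame short orbit of $\aut_{\mathbb{K}}(K)$, and then apply a Sylow argument at its points to force every conjugate of $\Delta$ to equal $\Delta$. First I compute the fixed-point set of $N$ on $\cP$. Writing $N=\langle\varphi_{0,0,v_0}\rangle$ with $v_0\in W$, so $v_0^m=1$, the generator acts as $s\mapsto s$, $z\mapsto z$, $x\mapsto v_0 x$; comparing residues of $x$ at a place $P$ shows that any fixed place must satisfy $v_P(x)\neq 0$. Conversely Lemma~\ref{09} ensures that every place of $\Omega_1\cup\Omega_2$ is fixed by $W\supseteq N$. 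The defining relation $s^q+s=1/x^m$ combined with Lemma~\ref{zp} identifies $\Omega_1\cup\Omega_2$ with $\supp(\Div(x))$, so $\mathrm{Fix}(N)=\Omega_1\cup\Omega_2$ is a set of exactly $2q$ places.

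Since $N$ is normal, the set $\Omega_1\cup\Omega_2$ is $\aut_{\mathbb{K}}(K)$-invariant; the unique non-tame short orbit of $\aut_{\mathbb{K}}(K)$, which contains $\Omega_1\cup\Omega_2$, therefore coincides with $\Omega_1\cup\Omega_2$ and has exactly $2q$ places. Fix any $g\in\aut_{\mathbb{K}}(K)$. The conjugate $\Delta^g=g\Delta g^{-1}$ is a Sylow $p$-subgroup, so it has two short orbits of size $q$ on $\cP$, both contained in $\Omega_1\cup\Omega_2$; by cardinality these two short orbits partition $\Omega_1\cup\Omega_2$, so every $P\in\Omega_1\cup\Omega_2$ lies in a short orbit of $\Delta^g$ with $|\Delta^g_P|=q$.

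I now check that $\Delta_P$ is the unique Sylow $p$-subgroup of $\aut_{\mathbb{K}}(K)_P$ for each $P\in\Omega_1\cup\Omega_2$. Indeed, the first ramification group $G^{(1)}_P$, which is the unique Sylow $p$-subgroup of $\aut_{\mathbb{K}}(K)_P$, contains $\Delta_P$ of order $q$, and is itself contained in some Sylow $p$-subgroup $\Delta^h$ of $\aut_{\mathbb{K}}(K)$; since $G^{(1)}_P$ fixes $P$, it lies in $\Delta^h_P$, which has order $q$. Hence $|G^{(1)}_P|=q$ and $G^{(1)}_P=\Delta_P$, so that any $p$-subgroup of $\aut_{\mathbb{K}}(K)_P$ of order $q$, in particular $\Delta^g_P$, equals $\Delta_P$. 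Taking $P_1\in\Omega_1$ and $P_2\in\Omega_2$, Lemma~\ref{lem431luglio2016} gives $\Delta_{P_1}\cap\Delta_{P_2}=\{1\}$, so $\Delta_{P_1}\cdot\Delta_{P_2}$ has order $q^2=|\Delta|$ and equals $\Delta$. Since $\Delta^g$ contains both $\Delta^g_{P_1}=\Delta_{P_1}$ and $\Delta^g_{P_2}=\Delta_{P_2}$, it contains $\Delta$, and by cardinality $\Delta^g=\Delta$. Thus $\Delta$ is normal in $\aut_{\mathbb{K}}(K)$. The technical crux is the determination of $\mathrm{Fix}(N)$ and the resulting rigidity $|\Omega_1\cup\Omega_2|=2q$; once that is established the Sylow-theoretic step is routine.
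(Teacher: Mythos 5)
Your proof is correct and follows essentially the same route as the paper's: reduce to case (ii) of Lemma~\ref{rp}, identify $\mathrm{Fix}(N)$ with $\Omega_1\cup\Omega_2$ so that normality of $N$ makes this $2q$-set invariant, and then use the uniqueness of the $p$-Sylow subgroup of each place stabilizer together with $\Delta=\Delta_{P_1}\Delta_{P_2}$ to force $\Delta^g=\Delta$. Your justification that $|G_P^{(1)}|=q$ (by embedding it in a conjugate of $\Delta$ and invoking the bound $|\Delta^h_P|\le q$ from Lemma~\ref{short}) is a slightly cleaner packaging of the paper's contradiction argument, but it is the same idea.
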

\begin{proof} We may assume that (ii) of Lemma \ref{rp} holds. Then $N=\langle \varphi_{0,0,w}\rangle$ with $w^r=1$. Therefore, the fixed places of $N$ are the zeroes and poles of $x$. From Lemma
\ref{zp}, these points form $\Omega_1\cup \Omega_2$. Hence $\aut_{\mathbb{K}}(K)$ preserves $\Omega_1\cup \Omega_2$. Therefore, the conjugate $\Delta'$ of $\Delta$ by any $h\in \aut_{\mathbb{K}}(K)$ has its two short orbits $\Omega_1'$ and $\Omega_2'$ contained in $\Omega_1\cup \Omega_2$. Actually, $\Omega_1'\cup\Omega_2'=\Omega_1\cup \Omega_2$. From this we infer that $\Delta=\Delta'$. Take any place $P\in \Omega_1$. Then
$|\Delta_P|=|\Delta_P'|=q$. Then both $\Delta_P$ and $\Delta_P'$ are contained in the unique $p$-subgroup $S_P$ of the stabilizer of $P$ in $\aut_{\mathbb{K}}(K)$, see \cite[(ii)a Theorem 11.49]{HKT}.
If $\Delta'_P \ne \Delta_P$  then $|S_P|>q$. Let $S$ be a Sylow $p$-subgroup of $\aut_{\mathbb{K}}(K)$. By Lemma \ref{nak}, $S$ is conjugate to $\Delta$ in $\aut_{\mathbb{K}}(K)$. But this is impossible as $|\Delta_Q|\leq q$ for any $Q\in \cX$ by Lemma \ref{short}. The same argument works for any place in $\Omega_2$. Since $\Delta_P$ and $\Delta_Q$, with $P\in \Omega_1,Q\in \Omega_2$, generate $\Delta$, it turns out
that $\Delta'$ is also generated by $\Delta_P$ and $\Delta_Q$. Thus $\Delta=\Delta'$.
\end{proof}
\begin{lemma} \label{W1} If $\aut_{\mathbb{K}}(K)$ has a solvable minimal normal subgroup then $W$ is a normal subgroup of $\aut_{\mathbb{K}}(K)$.
\end{lemma}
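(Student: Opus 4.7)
The plan is to bootstrap from Lemma \ref{delta}. Under the standing hypothesis, that lemma gives us that $\Delta$ is normal in $\aut_{\mathbb{K}}(K)$. Since the centralizer of a normal subgroup is always normal, it follows that $C:=C_{\aut_{\mathbb{K}}(K)}(\Delta)$ is itself a normal subgroup of $\aut_{\mathbb{K}}(K)$.

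Next, I would invoke Lemma \ref{lem4Agosto2016}, which identifies this centralizer explicitly as
\[
C_{\aut_{\mathbb{K}}(K)}(\Delta) = \Delta\times W.
\]
Thus $\Delta\times W$ is normal in $\aut_{\mathbb{K}}(K)$.

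Finally, I would exploit the internal structure of $\Delta\times W$. Since $\Delta$ is an elementary abelian $p$-group of order $q^2$ while $W$ is cyclic of order $m$ with $\gcd(m,p)=1$, the factor $W$ is precisely the subset of elements of $\Delta\times W$ whose orders are prime to $p$, i.e.\ the unique Hall $p'$-subgroup of the abelian group $\Delta\times W$. In particular, $W$ is characteristic in $\Delta\times W$, so every automorphism of $\aut_{\mathbb{K}}(K)$ (in particular, every inner automorphism) carries $W$ to itself. Combining this with the normality of $\Delta\times W$ in $\aut_{\mathbb{K}}(K)$ established in the previous step gives that $W$ is normal in $\aut_{\mathbb{K}}(K)$, as required.

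There is really no major obstacle in this argument; the work was done in Lemmas \ref{delta} and \ref{lem4Agosto2016}. The only point to double check is the characteristicity claim, which is immediate once one recalls that $|\Delta|=q^2$ and $|W|=m$ are coprime.
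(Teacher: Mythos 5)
Your proposal is correct and follows essentially the same route as the paper: the paper's proof likewise deduces from Lemma \ref{lem4Agosto2016} that $\Delta\times W$ is normal in $\aut_{\mathbb{K}}(K)$ (implicitly using Lemma \ref{delta} and the normality of centralizers of normal subgroups, which you make explicit) and then concludes from the coprimality of $|\Delta|$ and $|W|$ that $W$ is characteristic in $\Delta\times W$, hence normal. Your write-up merely spells out these intermediate steps in more detail than the paper does.
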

\begin{proof}  We may assume that (ii) of Lemma \ref{rp} holds. From Lemma \ref{lem4Agosto2016}, $\Delta\times W$ is a normal subgroup of $\aut_{\mathbb{K}}(K)$. Since $|\Delta|$ and $|W|$ are coprime,
the assertion follows.
\end{proof}

\begin{theorem} \label{aut1} If $\aut_{\mathbb{K}}(K)$ has a minimal normal subgroup which is solvable then $\aut_{\mathbb{K}}(K)=G$. In particular $|\aut_{\mathbb{K}}(K)| = 2q^2(q-1)m$.
\end{theorem}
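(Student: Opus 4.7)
The plan is to exploit the normality of $\Delta$ (Lemma \ref{delta}) and pin down $\aut_{\mathbb{K}}(K)$ through its faithful action on the rational subfield $K^{\Delta}$. Since $\Delta$ fixes $x$ and $[K:\mathbb{K}(x)]=q^2=|\Delta|$ (from the two independent Artin-Schreier extensions $z^q+z=x^m$ and $s^q+s=x^{-m}$), I would first observe that $K^{\Delta}=\mathbb{K}(x)$. Hence $A:=\aut_{\mathbb{K}}(K)/\Delta$ embeds in $\aut(\mathbb{K}(x))=\PGL(2,\mathbb{K})$. By Lemma \ref{zp} the short orbits $\Omega_1,\Omega_2$ of $\Delta$ lie above the places $x=0$ and $x=\infty$, and the normality of $\Delta$ forces $\aut_{\mathbb{K}}(K)$ to permute $\{\Omega_1,\Omega_2\}$; therefore $A$ stabilises $\{0,\infty\}\subset\mathbb{P}^1$ and lies inside the normaliser of a maximal torus $\mathbb{K}^*\rtimes\langle\iota\rangle$, where $\iota:x\mapsto 1/x$.

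The finite subgroups of $\mathbb{K}^*\rtimes\langle\iota\rangle$ are cyclic or dihedral. Because $\xi:(x,s,z)\mapsto(1/x,z,s)$ induces $\iota$ on $\mathbb{K}(x)$, its image lies outside the torus part, so $A=\Lambda\rtimes\langle\bar{\xi}\rangle$ with $\Lambda\subset\mathbb{K}^*$ cyclic; since the image of $V$ is $\mathcal{V}\subseteq\Lambda$ of order $m(q-1)$, the remaining task is to establish the reverse inclusion $\Lambda\subseteq\mathcal{V}$.

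For this, given $\lambda\in\Lambda$, I would lift it to $\tau\in\aut_{\mathbb{K}}(K)$ with $\tau(x)=\lambda x$. Because $\tau$ fixes each of $0,\infty$ on $\mathbb{K}(x)$, it preserves both $\Omega_1$ and $\Omega_2$, hence normalises the common pointwise stabiliser $\Delta_1=\{\varphi_{\alpha,0,1}:\alpha^q+\alpha=0\}$ of $\Omega_1$, and therefore preserves $K^{\Delta_1}=\mathbb{K}(x,z)$; thus $u:=\tau(z)\in\mathbb{K}(x,z)$ satisfies the Artin-Schreier identity $u^q+u=\lambda^m x^m$. I expect this Artin-Schreier analysis to be the main technical obstacle: write $u=\sum_{i=0}^{q-1}a_i(x)z^i$ in the $\mathbb{K}(x)$-basis $\{1,z,\ldots,z^{q-1}\}$; the requirement $\sigma_\alpha(u)-u\in\ker(\wp)$ for every $\sigma_\alpha\in\Delta_1$ yields, for each $1\le j\le q-2$, a polynomial in $\alpha$ of degree $<q$ vanishing on the $q$-element set $\Delta_1$, forcing $a_i\equiv 0$ unless $i$ is a power of $p$. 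A Moore-determinant (linearised polynomial) argument then forces the surviving $a_{p^k}$ to be constants $b_k\in\mathbb{K}$, so $u=a_0(x)+\sum_{k=0}^{h-1}b_k z^{p^k}$. Substituting and using $z^q=x^m-z$, matching coefficients of $z^{p^k}$ produces $b_k\in\mathbb{F}_q$, together with $a_0^q+a_0=\lambda^m x^m-\sum_{k=0}^{h-1}b_k x^{mp^k}$. A pole-order argument at $x=\infty$ (exploiting $\gcd(m,q)=1$, so that $a_0^q+a_0=\gamma x^{mp^k}$ has no solution in $\mathbb{K}(x)$ for $\gamma\ne 0$) would then kill all $b_k$ with $k\ge 1$ and leave $\lambda^m=b_0\in\mathbb{F}_q^*$, i.e.\ $\lambda^{m(q-1)}=1$, so $\lambda\in\mathcal{V}$.

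Combining $\Lambda=\mathcal{V}$ with the dihedral structure gives $|A|=2m(q-1)$ and $|\aut_{\mathbb{K}}(K)|=q^2\cdot 2m(q-1)=|G|$; since Lemma \ref{lem16Bluglio2016} already gives $G\le\aut_{\mathbb{K}}(K)$, the equality $\aut_{\mathbb{K}}(K)=G$ follows, as required.
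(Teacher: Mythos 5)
Your proposal is correct, and it reaches the conclusion by a genuinely different route in the decisive step. Both you and the paper start the same way: normality of $\Delta$ (Lemma \ref{delta}) plus rationality of $K^{\Delta}$ (Lemma \ref{raz}) embed $\aut_{\mathbb{K}}(K)/\Delta$ into $\PGL(2,\mathbb{K})$ with an invariant $2$-set $\{0,\infty\}$, forcing the quotient to be $\Lambda\rtimes\langle\bar{\xi}\rangle$ with $\Lambda$ cyclic. The divergence is in bounding $\Lambda$. The paper first shows $W\le C$ (via Lemma \ref{W1}), deduces that the index-two subgroup $T=\Delta\rtimes C$ centralizes $W$, and then invokes Lemma \ref{artm}, which rests on the externally quoted fact that the Artin--Mumford field $K^{W}$ has automorphism group of order exactly $2(q-1)q^2$. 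You instead compute $\Lambda$ directly: lift $\lambda\in\Lambda$ to $\tau$ with $\tau(x)=\lambda x$, note that $\tau$ preserves $\Omega_1$ and hence normalizes $\Delta_1$ and stabilizes $K^{\Delta_1}=\mathbb{K}(x,z)$, and then run the generalized Artin--Schreier analysis on $u=\tau(z)$ with $u^q+u=\lambda^m x^m$. The linearized-polynomial normal form $u=a_0(x)+\sum b_k z^{p^k}$, the condition $b_k\in\mathbb{F}_q$, and the degree argument (using $\gcd(m,p)=1$ so that $q\deg a_0$ can never equal $mp^k$ for $0\le k\le h-1$) do force $b_k=0$ for $k\ge 1$ and $\lambda^m=b_0\in\mathbb{F}_q^*$; this is sound. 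What your approach buys is self-containedness — no appeal to the classification of $\aut(AM)$ from \cite{KMam} — at the cost of an explicit computation; the paper's route is shorter because it leans on that prior result and on the auxiliary Lemmas \ref{artm} and \ref{W1}. Two small points to tidy: the relevant Galois action on $\mathbb{K}(x,z)|\mathbb{K}(x)$ is induced by $\Delta/\Delta_1$ (the maps $\varphi_{0,\beta,1}:z\mapsto z+\beta$), not by $\Delta_1$ itself, which fixes $\mathbb{K}(x,z)$ pointwise, so your "$\sigma_\alpha\in\Delta_1$" should read "$\sigma_\beta$ in the Galois group of $\mathbb{K}(x,z)|\mathbb{K}(x)$"; and you should note explicitly that $a_0$ has no finite poles (a finite pole of $a_0$ would give $a_0^q+a_0$ a pole, contradicting that the right-hand side is a polynomial in $x$), so the degree comparison at $x=\infty$ is legitimate.
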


\begin{proof} As usual, the factor group $\aut_{\mathbb{K}}(K)/\Delta$ is viewed as a subgroup of $\aut_{\mathbb{K}}(K^\Delta)$. Since $\xi$ interchanges $\Omega_1$ and $\Omega_2$, Lemma \ref{short} yields that
$\aut_{\mathbb{K}}(K^\Delta)$ has an orbit of length $2$ consisting of the points lying under $\Omega_1$ and $\Omega_2$ in the field extension $K|K^\Delta$. From Lemma \ref{raz}, $K^\Delta$ is rational. Hence $\aut_{\mathbb{K}}(K^\Delta)$ is isomorphic to a subgroup of $PGL(2,\mathbb{K})$. From the classification of subgroups of $PGL(2,\mathbb{K})$, $\aut_{\mathbb{K}}(K^\Delta)$ is a dihedral group. This shows that $\aut_{\mathbb{K}}(K)$ contains a (normal) subgroup $T$ of index $2$ such that $T = \Delta \rtimes C$ with a cyclic group $C$. Observe that $T$ is the subgroup of
$\aut_{\mathbb{K}}(K)$ which preserves both $\Omega_1$ and $\Omega_2$. Hence $W\le T$. From Lemma \ref{W1}, $CW$ is a group. Since its order $|C||W|/|C\cap W|$ is prime to $p$, this yields $W\leq C$.
Therefore, the assertion follows from Lemma \ref{artm}.
\end{proof}
\subsection{Case II: $\aut_{\mathbb{K}}(K)$ contains no solvable minimal normal subgroup}
For the rest of the paper we assume that  $\aut_{\mathbb{K}}(K)$ has no solvable minimal normal subgroup. In particular, $O(\aut_{\mathbb{K}}(K))$ is trivial, that is, $\aut_{\mathbb{K}}(K)$ is an odd-core free group.
Therefore, any minimal normal subgroup $N$ of $\aut_{\mathbb{K}}(K)$ is the direct product of pairwise isomorphic non-abelian simple groups. Since $\aut_{\mathbb{K}}(K)$ has no elementary abelian subgroup of order $8$, see the proof of Lemma \ref{lem2agosto2016},  this direct product has just one factor, that is, $N$ itself is a non-abelian simple group. The possibilities for $N$ are listed below.
\begin{enumerate}
\item[(I)] $N \cong PSL(2,\bar{q})$, where $\bar{q} \geq 5$ odd (The Sylow $2$-subgroups of $N$ is dihedral);
\item[(II)] $N \cong PSL(3,\bar{q})$, where $\bar{q} \equiv 3$ mod $4$ (The Sylow $2$-subgroups of $N$ are semidihedral);
\item[(III)] $N \cong PSU(3,\bar{q})$, where $\bar{q} \equiv 1$ mod $4$ (The Sylow $2$-subgroups of $N$ are semidihedral);
\item[(IV)] $N \cong \bA_7$ (The Sylow $2$-subgroups of $N$ are dihedral);
\item[(V)] $N \cong M_{11}$ (The Sylow $2$-subgroups of $N$ are semidihedral).
\end{enumerate}

\begin{lemma}
\label{lem5agosto2016} If no minimal normal subgroup of $\aut_{\mathbb{K}}(K)$ is solvable, and $N$ is a non-abelian minimal normal subgroup of $\aut_{\mathbb{K}}(K)$, then $\Delta$ is contained in $N$.
\end{lemma}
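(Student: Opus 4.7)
\emph{Plan.} The approach is to analyse the intersection $\Delta\cap N$. Since $N\triangleleft \aut_{\K}(K)$ and $\Delta$ is a Sylow $p$-subgroup of $\aut_{\K}(K)$ by Lemma \ref{nak}, $\Delta\cap N$ is a Sylow $p$-subgroup of $N$; being a subgroup of the elementary abelian group $\Delta$, it is itself elementary abelian. The goal is $\Delta\cap N=\Delta$, and I would argue by contradiction, supposing $\Delta\not\le N$.

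First, I would establish a closure property. If some non-identity element of the form $\varphi_{\alpha,0,1}$ (or $\varphi_{0,\beta,1}$) lay in $N$, then by normality of $N$ both $\xi\varphi_{\alpha,0,1}\xi=\varphi_{0,\alpha,1}$ (from Lemma \ref{lem16Bluglio2016}) and $\varphi_{0,0,v}^{-1}\varphi_{\alpha,0,1}\varphi_{0,0,v}=\varphi_{v^m\alpha,0,1}$ (from the computation in Lemma \ref{le1}) would lie in $N$. As $v$ ranges over $\mathcal V$, $v^m$ ranges over $\mathbb F_q^{*}$, so the entire order-$q$ subgroups $\{\varphi_{\alpha',0,1}\mid(\alpha')^q+\alpha'=0\}$ and $\{\varphi_{0,\beta',1}\mid(\beta')^q+\beta'=0\}$ would be contained in $N$ and jointly generate $\Delta$, contradicting $\Delta\not\le N$. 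By Lemma \ref{lemA31oct2016} this rules out any fixed point of a non-identity element of $\Delta\cap N$ in $\Omega_1\cup\Omega_2$; since $\Delta\cap N$ preserves $\Omega_1$ and $\Omega_2$ individually, it acts freely on each, forcing $|\Delta\cap N|$ to divide $q$.

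Next, I would show that $C_{\aut_{\K}(K)}(N)=1$. Otherwise, the Case II hypothesis would force $C_{\aut_\K(K)}(N)$ to contain a second non-abelian simple normal subgroup $N'$ of $\aut_{\K}(K)$. Each group in the list (I)--(V) has a dihedral or semidihedral Sylow 2-subgroup, and a direct check shows every such 2-group contains a Klein four-subgroup $V_4$; combining a $V_4\le N$ with any involution of $N'$ would produce an elementary abelian subgroup of $\aut_{\K}(K)$ of order $8$, contradicting Remark \ref{rem5agosto2016}. Hence $\aut_{\K}(K)$ embeds in $\aut(N)$, and $\Delta/(\Delta\cap N)$, of order at least $q^2/q=q$, embeds as a $p$-subgroup of $\mathrm{Out}(N)$. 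Moreover the simplicity of $N$ forbids an index-2 subgroup, so $N$ preserves each of $\Omega_1,\Omega_2$; and $N$ cannot act trivially on $\Omega_1$, as that would embed $N$ into the solvable stabilizer of a point. Therefore $N$ embeds in $\mathrm{Sym}(\Omega_1)\cong S_q$.

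Finally, I would inspect each of (I)--(V) using the three constraints (a) $\Delta\cap N$ is elementary abelian of order dividing $q$, (b) $\mathrm{Out}(N)$ has a $p$-subgroup of order at least $q$, and (c) $N$ embeds in $S_q$. Cases (II) and (III) with $\bar q$ a power of $p$ fall at once because their Sylow $p$-subgroups are unipotent and non-abelian, violating (a). In the remaining configurations the Sylow $p$ of $N$ is cyclic of order at most $p$ (or elementary abelian of order $9$ in the cases $A_7,M_{11}$ with $p=3$); an inspection of $\mathrm{Out}(N)$ via (b), combined with the minimal faithful permutation degree of $N$ via (c) (for instance, degree $\ge\bar q+1$ for $\PSL(2,\bar q)$), closes every remaining subcase. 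The tightest situation is $\PSL(2,p^a)$ in same characteristic: (a) forces $a\le h$, yet $|\mathrm{Out}(\PSL(2,p^a))|_p=p^{v_p(a)}\le a\le h<p^h=q$ violates (b). The main obstacle is precisely this case-by-case bookkeeping across the Alperin--Gorenstein--Walter list, especially in the cross-characteristic classical subcases, where (b) and (c) must be combined to force the numerical contradiction.
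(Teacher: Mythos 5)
Your opening moves are sound and partly coincide with the paper's: the observation that $\Delta\cap N$ is a Sylow $p$-subgroup of $N$, the closure computation showing that a nontrivial $\varphi_{\alpha,0,1}$ or $\varphi_{0,\beta,1}$ in $N$ forces $\Delta\le N$, and the triviality of $C_{\aut_{\mathbb{K}}(K)}(N)$ via Remark \ref{rem5agosto2016} (the paper gets the last point slightly differently, via Feit--Thompson, but your variant works). The genuine gap is constraint (c), the embedding $N\hookrightarrow \mathrm{Sym}(\Omega_1)\cong S_q$. This rests on the claim that $N$ permutes $\{\Omega_1,\Omega_2\}$, but $\Omega_1$ and $\Omega_2$ are orbits of $\Delta$, and at this stage of Case II $\Delta$ is \emph{not} known to be normal in $\aut_{\mathbb{K}}(K)$ (that is exactly what Lemma \ref{delta} proves, and only under the Case I hypothesis). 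An element $n\in N$ carries $\Omega_i$ to a short orbit of the conjugate Sylow subgroup $n\Delta n^{-1}$, which need not lie in $\Omega_1\cup\Omega_2$; the unique non-tame orbit $\Omega$ of $\aut_{\mathbb{K}}(K)$ is only known to \emph{contain} $\Omega_1\cup\Omega_2$, not to equal it. Since you explicitly rely on (c) to kill the cross-characteristic subcases (e.g.\ $N\cong\PSL(2,\ell^a)$ with $\ell\neq p$, $q=p$, $p\mid a$, which constraints (a) and (b) alone do not exclude), the argument does not close.

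The fix, which is the paper's route, is to strengthen your constraint (b) instead of reaching for (c). Note that $\Delta\cap N$ is normalized by $G$ (it is the intersection of $\Delta\trianglelefteq G$ with the normal subgroup $N$), so Lemma \ref{le1} applies and gives $\Delta\cap N\in\{1,\Delta\}$ --- your closure property is only the special case of Lemma \ref{le1} for elements with one vanishing coordinate, which is why you only obtain $|\Delta\cap N|\mid q$ (indeed $\Psi=\{\varphi_{\alpha,\alpha,1}\}$ is a fixed-point-free subgroup of order $q$, so your bound cannot be improved by that method). With Lemma \ref{le1}, the contradiction hypothesis $\Delta\not\le N$ forces $\Delta\cap N=1$, hence $\Delta\cong\Delta N/N$ embeds as an elementary abelian subgroup of order $q^2$ in $\aut(N)/N$. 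For every group in the Alperin--Gorenstein--Walter list the odd-order subgroups of $\aut(N)/N$ are cyclic or too small to contain such a subgroup (this is the paper's case check, and it needs no permutation-degree input), so $\Delta\cap N\ne 1$ and Lemma \ref{le1} then yields $\Delta\cap N=\Delta$ directly. I recommend you restructure along these lines and drop constraint (c).
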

\begin{proof} Since $N$ is a normal subgroup, its centralizer $C(N)$ in $\aut_{\mathbb{K}}(K)$ is also a normal subgroup of $\aut_{\mathbb{K}}(K)$. Actually $C(N)$ is trivial. In fact, on one hand,  $C$ has odd order, since an involution in $\aut_{\mathbb{K}}(K)$ together with an elementary abelian group of $N$ of order $4$ would generate an elementary abelian group of order $8$ contradicting the claim in Remark \ref{rem5agosto2016}. On the other hand, groups of odd order are solvable by the Feith-Thompson theorem.
By conjugation, every $d\in \Delta$ defines a permutation on $N$, and hence $\Delta$ has a permutation representation on $N$. Its kernel is contained in the centralizer $C(N)$, and hence is trivial, that is, the permutation representation is faithful. Therefore, $\Delta$ is isomorphic to a subgroup $D$ of the automorphism group $\aut(N)$ of $N$.

We show that $D\cap N\neq \{1\}$. By absurd, $\aut(N)/N$ contains the subgroup $DN/N\cong D$. Then case (I) does not occur since $\aut(N)\cong P\Gamma L(2,\bar{q})$ while the factor group $P\Gamma L(2,\bar{q})/PGL(2,\bar{q})$ is cyclic and $[PGL(2,\bar{q}):PSL(2,\bar{q})]=2$, and hence the odd order subgroups of $\aut(N)/N$ are all cyclic. In case (II),
$\aut(N)\cong P\Gamma L(3,\bar{q})$ while the factor group $P\Gamma L(3,\bar{q})/PGL(3,\bar{q})$ is cyclic and $[PGL(3,\bar{q}):PSL(2,\bar{q}):3]=1,3$ according as $q\equiv \pm 1 \pmod 3$.
Therefore, an odd order subgroup of $\aut(N)/N$ is an elementary abelian group of order $q^2$ only for $q=3$ and $\bar{q}\equiv 1 \pmod 3$. Furthermore, if $\bar{q}\equiv 1 \pmod 3$ then $|N|$ also divisible by $3$. Therefore, case (ii) does not occur either. Case (III) can be ruled out with the same argument replacing the condition $q\equiv \pm 1$ $\pmod 3$ with $q\equiv \mp 1$ $\pmod 3$.  In cases (IV), $|\aut(N)/N|=2$ and $|\aut(N)/N|=1$ respectively, and they contain no nontrivial subgroups of odd order.

The nontrivial subgroup $D\cap N$ is contained in a Sylow $p$-subgroup $S_p$ of $N$. Since $N$ is a normal subgroup of $\aut_{\mathbb{K}}(K)$, Lemma \ref{nak} yields that $D\cap N$ is a subgroup of $\Delta$.
Since $D\cap N$ is a normal subgroup of $G$, Lemma \ref{le1} shows that $D\cap N=\Delta$. Therefore, $\Delta<N$.
\end{proof}
\begin{proposition}
\label{prop6agosto2016} $\aut_{\mathbb{K}}(K)$ has a minimal normal solvable subgroup.
\end{proposition}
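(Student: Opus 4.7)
The plan is to contradict each of the five abstract possibilities (I)--(V) for a non-abelian minimal normal subgroup $N$ of $\aut_{\mathbb{K}}(K)$. The starting point is Lemma~\ref{lem5agosto2016}, which forces $\Delta\le N$; together with Lemma~\ref{nak} this makes $\Delta$ a Sylow $p$-subgroup of $N$, so in particular the Sylow $p$-subgroups of $N$ must be elementary abelian of rank exactly $2h$, where $q=p^h$. This rigid requirement already eliminates several cases: in (II) and (III) with $p\mid\bar q$, Sylow $p$-subgroups of $\PSL(3,\bar q)$ and $\PSU(3,\bar q)$ are non-abelian (Heisenberg-type of order $\bar q^3$), contradicting the elementary abelianness of $\Delta$; in (I)--(III) with $p\nmid\bar q$ the $p$-rank of a Sylow of $N$ is at most $2$ (at most $1$ in case (I), where Sylows are cyclic), which forces $h=1$, i.e.\ $q=p$; and in (IV)--(V) the only odd prime $p$ for which $|\bA_7|=2520$ or $|M_{11}|=7920$ contains an elementary abelian $p$-subgroup of rank $\ge 2$ is $p=3$, again forcing $q=p=3$. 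Case (I) in defining characteristic survives only with $\bar q=q^2$ and $N\cong\PSL(2,q^2)$.

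In each surviving subcase I would combine three structural inputs: Lemma~\ref{lem4Agosto2016} (the centralizer of $\Delta$ in $\aut_\mathbb{K}(K)$ equals $\Delta\times W$), Lemma~\ref{short} (the two short orbits of $\Delta$ on $\cP$ have length $q$), and the Hurwitz genus formula applied to $N$ with $\gg(K)=(q-1)(qm-1)$. Since $N$ is normal in $\aut_\mathbb{K}(K)$, the centralizer $C_N(\Delta)=\Delta\times(W\cap N)$ must coincide with the centralizer computed inside the abstract group $N$; together with the embedding $N_N(\Delta)/C_N(\Delta)\hookrightarrow \aut(\Delta)=GL(2h,p)$ and the requirement that this quotient preserve the partition $\Omega_1\cup\Omega_2$ of fixed-point data, one extracts numerical constraints on $|N|$. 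Stichtenoth's bound $|\aut_\mathbb{K}(K)|\le 16\gg(K)^4$ applies because $K$ has nontrivial $p$-rank $(q-1)^2$ and so cannot be the Hermitian function field; this bound, combined with the lower bound $|\aut_\mathbb{K}(K)|\ge |N|\cdot|G|/|N\cap G|$ coming from $G=\Phi\rtimes\langle\xi\rangle$, produces explicit inequalities in $q$ and $m$ that rule out each of the non-defining-characteristic subcases of (I)--(III) and both of (IV)--(V).

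The main obstacle will be case (I) with $N\cong\PSL(2,q^2)$, whose order $q^2(q^4-1)/2$ is broadly compatible with the structure of $\aut_\mathbb{K}(K)$ and cannot be dismissed by crude order estimates. I expect the decisive argument there to exploit the self-centralizing nature of a Sylow $p$-subgroup in $\PSL(2,q^2)$: this forces $W\cap N=\{1\}$, and hence $N_N(\Delta)$ is a Borel subgroup of order $q^2(q^2-1)/\gcd(2,q^2-1)$ lying inside the normalizer of $\Delta$ in $\aut_\mathbb{K}(K)$. Combining with Lemma~\ref{raz} (so that $\aut_\mathbb{K}(K)/\Delta$ embeds in $\aut_\mathbb{K}(K^\Delta)\le \PGL(2,\mathbb{K})$) and Dickson's classification (Lemma~\ref{lem30oct2016}), the image of $N/\Delta$ in $\PGL(2,\mathbb{K})$ is a simple subgroup $\PSL(2,q^2)$, forcing the quotient $\aut_\mathbb{K}(K)/\Delta$ itself to contain a copy of $\PSL(2,q^2)$; the resulting size is incompatible with the dihedral structure of the subgroup of $\aut_\mathbb{K}(K^\Delta)$ induced by $G/\Delta$, together with the ramification contribution of $\Omega_1$ and $\Omega_2$ computed via Hurwitz, yielding the desired contradiction.
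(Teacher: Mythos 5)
Your overall skeleton agrees with the paper's: assume a non-abelian minimal normal subgroup $N$ of one of the types (I)--(V), use Lemma~\ref{lem5agosto2016} to force $\Delta\le N$, observe via Lemma~\ref{nak} that $\Delta$ is then an elementary abelian Sylow $p$-subgroup of $N$, and kill the defining-characteristic subcases of (II)--(III) because there the Sylow $p$-subgroups are non-abelian. However, your finishing move in the decisive case $N\cong\PSL(2,q^2)$ does not work as written: in this scenario $\Delta$ is a \emph{non-normal} Sylow $p$-subgroup of $N$, hence not normal in $\aut_{\mathbb{K}}(K)$, so neither ``$\aut_{\mathbb{K}}(K)/\Delta$'' nor ``$N/\Delta$'' is a group, and Lemma~\ref{raz} only lets you embed $N_{\aut_{\mathbb{K}}(K)}(\Delta)/\Delta$ (a Borel quotient, which is cyclic and perfectly compatible with a dihedral image) into $\PGL(2,\mathbb{K})$ --- there is no simple $\PSL(2,q^2)$ to be found there. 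The paper instead uses the triviality of $C(N)$ (from the proof of Lemma~\ref{lem5agosto2016}) to get a monomorphism $\tau:\aut_{\mathbb{K}}(K)\to\aut(N)=P\Gamma L(2,q^2)$, in which $\tau(\Delta)$ is self-centralizing; this contradicts Lemma~\ref{lem4Agosto2016}, which puts the prime-to-$p$ group $W$ inside $C(\Delta)$. You state the self-centralizing property but only extract $W\cap N=\{1\}$ from it, which yields no contradiction.

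The second gap is your treatment of the surviving cross-characteristic subcases of (II)--(III) and of (IV)--(V), which you delegate to ``explicit inequalities'' coming from Stichtenoth's bound $|\aut_{\mathbb{K}}(K)|\le 16\gg(K)^4$ together with lower bounds from $G$. These order estimates cannot close the argument: $\bar q$ and $m$ are free parameters, and, for instance, $q=p=3$, $\bar q=7$ (so $\bar q\equiv 3\pmod 4$ and $3\mid \bar q-1$) with $m$ large satisfies every such inequality. The paper again argues through centralizers: in (II)--(III) the non-cyclic group $\tau(\Delta)$ must sit inside the maximal torus of order $(\bar q\mp 1)^2/\mu$, whose centralizer contains an elementary abelian group of order $4$, impossible inside $\Delta\times W$ with $W$ cyclic; in (IV)--(V) the only square-order Sylow subgroups of $\bA_7$ and $M_{11}$ have order $9$ and are self-centralizing in $\aut(N)$, again contradicting Lemma~\ref{lem4Agosto2016}. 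So while you identified the right structural levers (Lemmas~\ref{lem5agosto2016}, \ref{nak} and \ref{lem4Agosto2016}), the arguments that actually eliminate each case are either missing or invalid.
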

\begin{proof} By absurd, $\aut_{\mathbb{K}}(K)$ has no minimal solvable subgroup, and hence it has a minimal normal simple subgroup isomorphic to one of the five simple groups listed above.
From the proof of Lemma \ref{lem5agosto2016}, the centralizer of $N$ in $\aut_{\mathbb{K}}(K)$ is trivial. Therefore, we have a monomorphism $\tau:\aut_{\mathbb{K}}(K)\to \aut(N)$ defined by the map which takes $g\in \aut_{\mathbb{K}}(K)$ to the automorphism $\tau(g)$ of $N$ acting on $N$ by conjugation with $g$. Since $\tau$ maps $N$ into a normal subgroup $\tau(N)$  of $\aut(N)$ and $\Delta<N$ by Lemma \ref{lem5agosto2016}, we have that $\tau(N)$ has a subgroup isomorphic to $\Delta$.

In Case (I), $\tau(N)=PSL(2,\bar{q})$, and $\bar{q}=q^2$ by Lemma \ref{nak} and the classification of subgroups of $PSL(2,\bar{q})$. From Lemma \ref{lem4Agosto2016}, the centralizer of $\Delta$ in $\aut_{\mathbb{K}}(K)$ contains an element of order prime to $p$. Obviously, the same holds for $\tau(\Delta)$ where $\tau(\Delta)<\tau(N)\cong PSL(2,\bar{q})$. But this is impossible since $\aut(PSL(2,\bar{q}))=P\Gamma L(2,\bar{q})$ and any subgroup of $P\Gamma L(2,q)$ of order $\bar{q}$  coincides with its own centralizer in $P\Gamma L(2,q)$.

In Case (II), $\tau(N)= PSL(3,\bar{q})$ and $q$ must be a divisor of $\bar{q}-1$. The latter claim  follows from the fact that  $PSL(3,\bar{q})$ has order $\bar{q}^3(\bar{q}^2+\bar{q}+1)(\bar{q}+1)(\bar{q}-1)^2/\mu$ with $\mu=3,1$ according as $\mu\equiv \pm 1 \pmod 3$ where its subgroups of order $\bar{q}^3$ are not abelian while its subgroups of order $\bar{q}^2+\bar{q}+1$ and of order $\bar{q}+1$ are cyclic. Therefore, $\tau(\Delta)$ is
a Sylow subgroup contained in a subgroup which is the direct product of two cyclic groups of order $\bar{q}-1$. Since $\bar{q}-1$ is even, this shows that the centralizer of $\tau(\Delta)$ in $PSL(3,\bar{q})$ contains an elementary abelian subgroup of order $4$. Since $\tau$ is a monomorphism, the same holds for the centralizer of $\Delta$ in $\aut_{\mathbb{K}}(K)$. But this contradicts  Lemma \ref{lem4Agosto2016}.

Case (III) can be ruled out with the argument used for Case (II) whenever $\bar{q}-1$ and $\bar{q}+1$ are interchanged.

In Cases (IV) and (V), we have $\aut(N)=\bS_7$ and $\aut(N)=M_{11}$ respectively. The only Sylow subgroups of $N$ whose orders are square numbers have order $9$, and they coincide with their own centralizers in $\aut(N)$ contradicting Lemma \ref{lem4Agosto2016}.
\end{proof}

\section{Some Galois subcovers of $K$} \label{quot}
We investigate the possibility that some Galois subcovers of the Galois closure $K$ of $F|H$ are of the same type of $K$ with different defining pair $(q,m)$  of parameters. More precisely, we consider the family of all function fields $\bar{F}(x,y)$ with $y^{\bar{q}}+y=x^{\bar{m}}+x^{-\bar{m}}$ where $\bar{q}=p^k$, $\bar{m}$ is any positive integer prime to $p$, and find sufficient conditions on the parameters $\bar{q}$ and $\bar{m}$ ensuring that the Galois closure $\bar{K}$ of the extension $\bar{F}|\bar{H}$ be isomorphic to a subfield of $K^H$ for a subgroup $H$ of $\aut_{\mathbb{K}}(K)$.

First we point out that this can really occur.
\begin{proposition} For any divisor $d$ of $m$, let $C$ be the subgroup of $W$ of order $d$, and set $\bar{m}=m/d$. Then the subfield $K^C$ of $K$ is $\mathbb{K}(t,s,z)$ with (\ref{eq4}) and
\begin{equation}
\label{eq13} z^q+z=t^{\bar{m}},
\end{equation}
and $K^C$ is isomorphic to $\bar{F}$ for $\bar{q}=q$ and $\bar{m}$.
\end{proposition}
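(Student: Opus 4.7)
The plan is to pin down $K^C$ by examining how $C$ acts on the generators $x,s,z$ of $K$. Since $W$ is cyclic of order $m$, its unique subgroup of order $d$ is cyclic, generated by $\varphi_{0,0,\omega}$ with $\omega$ a primitive $d$-th root of unity. From the formula for $\varphi_{\alpha,\beta,v}$ recalled in the proof of Proposition~\ref{pro1}, this generator acts by $(x,s,z)\mapsto(\omega x,\omega^{-m}s,\omega^{m}z)$. The critical arithmetic observation is that $d\mid m$ forces $\omega^m=1$, so $C$ fixes $s$ and $z$ pointwise and acts on $x$ purely by multiplication by $d$-th roots of unity.

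It follows immediately that $\mathbb{K}(t,s,z)\subseteq K^C$ with $t=x^d$. For the reverse inclusion I would use a Galois-theoretic degree count: on the one hand $K=\mathbb{K}(t,s,z)(x)$ with $x^d=t$, giving $[K:\mathbb{K}(t,s,z)]\le d$; on the other hand $[K:K^C]=|C|=d$. These two bounds together force $K^C=\mathbb{K}(t,s,z)$. The two defining relations then follow by direct substitution: replacing $x^m$ by $(x^d)^{m/d}=t^{\bar m}$ in (\ref{eq3}) produces $z^q+z=t^{\bar m}$, which is the announced new equation, while (\ref{eq4}) involves only $s$ and $z$ and survives verbatim.

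For the last assertion, I would invoke Proposition~\ref{pro1} with the replacement parameter pair $(\bar q,\bar m)=(q,m/d)$, noting that $\gcd(\bar m,p)=1$ since $\bar m\mid m$. That proposition, together with the change of variables $\bar s=\bar z-\bar y$ carried out in its proof, produces a presentation of the Galois closure associated to those parameters as $\mathbb{K}(\bar x,\bar s,\bar z)$ cut out by $\bar z^q+\bar z=\bar x^{\bar m}$ and $\bar s^q+\bar s=1/\bar x^{\bar m}$. This is precisely the shape of $K^C$, and so the assignment $\bar x\mapsto t$, $\bar s\mapsto s$, $\bar z\mapsto z$ extends to a $\mathbb{K}$-algebra isomorphism. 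The only mildly delicate step is the degree bound $[K:\mathbb{K}(t,s,z)]\le d$ in the middle paragraph; everything else reduces to formal substitution and to invoking the explicit construction already performed in Proposition~\ref{pro1}.
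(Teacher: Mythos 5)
Your argument is correct and coincides with the paper's (much terser) proof: both rest on the inclusion $\mathbb{K}(t,s,z)\subseteq K^C$ with $t=x^d$ fixed by $C$, combined with the degree count $[K:K^C]=|C|=d\geq [K:\mathbb{K}(t,s,z)]$. The details you supply --- the explicit action of the generator of $C$ on $(x,s,z)$, the substitution $x^m=t^{\bar m}$ yielding (\ref{eq13}), and the identification with the presentation from Proposition \ref{pro1} --- are exactly what the paper leaves implicit.
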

\begin{proof}
 The rational function $t=x^d$ is fixed by $C$. Since $[\mathbb{K}(K) : \mathbb{K}(K^C)]=d$ and $\mathbb{K}(t,s,z) \subset K^C$, the claim follows.
\end{proof}
Next we show examples with $\bar{q}<q$ arising from subfields of $\mathbb{F}_q$. For this purpose, we need a slightly different representation for $K$ and its $\mathbb{K}$-automorphism group. Take two nonzero elements $\mu,\theta\in \mathbb{K}$
 such that $\mu^q+\mu=0$ and $\theta^m=-\mu^{-1}$, and define
$x'=\theta^{-1}x,\,s'=\mu^{-1}s,\,z'=\mu z$. Then $K=\mathbb{K}(x',s',z')$ with
\begin{equation} \label{eq16}
s'^q-s'=\frac{1}{z'^q-z'},
\end{equation}
and
\begin{equation} \label{eq17}
z'^q-z'=x'^m,
\end{equation}
In fact, from (\ref{eq4}),
\begin{equation*} \label{eq14}
1=(s^q+s)(z^q+z) = (\mu^qs'^{q} + \mu s')(\mu^{-q}z'^{q}+\mu^{-1}z')=(s'^{q}-s')(z'^{q}-z'),
\end{equation*}
while, from (\ref{eq3}),
\begin{equation} \label{eq15}
-\mu^{-1}x'^m=x^m=z^q+z=-\mu^{-1}(z'^q-z').
\end{equation}
Let $\mathbb{F}_{q^r}$ the smallest Galois extension of $\mathbb{F}_q$ such that $m\mid (q^r-1)$.
For $\alpha',\beta'\in \mathbb{F}_q$ and $v^{m(q-1)}=1$ with $v\in \mathbb{F}_{q^r}$, let $$\varphi'_{\alpha',\beta',v'}(x',s',z')=(v'x',-v'^{-m}s'+\alpha',-v'^m z'+\beta').$$ Then $\varphi'_{\alpha',\beta',v'}(x',s',z')=\varphi_{\alpha,\beta,v}(x,s,z)$ for $\alpha=\alpha',\beta=\beta',v=v'\theta^{-1}$, and $\varphi'_{\alpha',\beta',v}(x',s',z')\in \aut_{\mathbb{K}}(K)$. Let
$$\Delta'= \{\varphi'_{\alpha',\beta',1} \mid \alpha', \beta' \in \mathbb{F}_q \},\qquad C_{m(q-1)}=\{\varphi'_{0,0,v}\mid v^{(m(q-1)}=1\},$$
and
$$\xi': (x',s',z') \mapsto \Big(\frac{1}{x'}, z', s' \Big).$$
Theorem \ref{mr} shows that $\Delta$ is the unique Sylow $p$-subgroup of $\aut_{\mathbb{K}}(K)$. Since $|\Delta'|=|\Delta|$, we have $\Delta'=\Delta$. Furthermore, Lemma \ref{zp} remains valid if $x$ is replaced by $x'$, and Lemma \ref{lemA31oct2016} remains valid, as well, if it is referred to $\varphi'_{\alpha',0,1}$ and $\varphi'_{0,\beta',1}$ with $\alpha',\beta'\in \mathbb{F}_q$.

Now, take  a nontrivial subfield $\mathbb{F}_{\bar{q}}$  of $\mathbb{F}_q$. Then
$$\tilde{\Delta}:=\{ \varphi'_{\alpha',\beta',1} \mid \alpha', \beta' \in \mathbb{F}_{\bar{q}} \},$$
is a subgroup of $\Delta$ of order $\bar{q}^2$. The subgroup $\tilde{\Delta}_1$ of $\tilde{\Delta}$ of order $\bar{q}$ consisting of all $\varphi'_{\alpha',0,1}$ with $\alpha'\in \mathbb{F}_{\bar{q}}$ fixes every place in $\Omega_1$, and the same holds for $\Omega_2$ when the subgroup $\tilde{\Delta}_2$ of all $\varphi'_{0,\beta',1}$ with $\beta'\in \mathbb{F}_{\bar{q}}$ is considered.

The following lemmas give basic information on the Galois subcover $K^{\tilde{\Delta}}$.
\begin{lemma} \label{gprank} The genus and $p$-rank of $K^{\tilde{\Delta}}$ are
$$\gg(K^{\tilde{\Delta}})=\Big(\frac{q}{\bar{q}}\,m-1\Big)\Big(\frac{q}{\bar{q}}-1\Big),\quad{\mbox{and}}\quad\gamma(K^{\tilde{\Delta}})=\Big(\frac{q}{\bar{q}}-1\Big)^2.$$
\end{lemma}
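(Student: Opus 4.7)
The plan is to compute $\gg(K^{\tilde{\Delta}})$ and $\gamma(K^{\tilde{\Delta}})$ by applying the Riemann--Hurwitz and Deuring--Shafarevich formulas to the action of $\tilde{\Delta}$ on the places of $K$. First I would identify the short orbits: since $\tilde{\Delta}\le \Delta$, ramification can only occur above $\Omega_1\cup\Omega_2$ by Lemma \ref{short}. For $P\in\Omega_i$, Lemma \ref{lemA31oct2016} together with the description of $\tilde{\Delta}$ in the primed coordinates gives $\tilde{\Delta}_P=\tilde{\Delta}\cap\Delta_P=\tilde{\Delta}_i$, of order $\bar q$. Hence each of $\Omega_1,\Omega_2$ splits into $q/\bar q$ short orbits of $\tilde{\Delta}$ of length $\bar q$, for a total of $2q/\bar q$ short orbits.

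The $p$-rank is then immediate from Deuring--Shafarevich: plugging $\gamma(K)=(q-1)^{2}$ and $|\tilde{\Delta}|=\bar q^{2}$ into
\[
\gamma(K)-1=\bar q^{2}\bigl(\gamma(K^{\tilde{\Delta}})-1\bigr)+\frac{2q}{\bar q}\,(\bar q^{2}-\bar q)
\]
and simplifying yields $(q-\bar q)^{2}=\bar q^{2}\,\gamma(K^{\tilde{\Delta}})$, whence $\gamma(K^{\tilde{\Delta}})=(q/\bar q-1)^{2}$.

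For the genus, the core task is the different $d_{P}$ at $P\in\Omega_1\cup\Omega_2$ in $K|K^{\tilde{\Delta}}$. I would first apply Riemann--Hurwitz to the full $\Delta$-action, using $K^{\Delta}$ rational (Lemma \ref{raz}) and $\gg(K)=(q-1)(qm-1)$, to force $d_{P}(\Delta)=(q-1)(m+1)$ at each of the $2q$ ramified places. Next I would show that the lower-numbering ramification filtration of $\Delta_P$ is single-jump at $m$: $\Delta_P$ fixes $x$ and exactly one of $s,z$, so locally $K|K^{\Delta_P}$ is the generalized Artin--Schreier extension coming from $s^{q}+s=x^{-m}$ (or $z^{q}+z=x^{m}$), whose right-hand side has pole order $m$, coprime to $p$, at the place of $K^{\Delta_{P}}$ below $P$; since $\gcd(m,p)=1$ this Artin--Schreier equation is already ``reduced'', and the conductor is $m+1$ for every sub-extension of degree $p$. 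Hence $\Delta_{P}^{(i)}=\Delta_{P}$ for $0\le i\le m$ and $\Delta_{P}^{(i)}=\{1\}$ for $i>m$. Using the compatibility $\tilde{\Delta}_{P}^{(i)}=\tilde{\Delta}\cap \Delta_{P}^{(i)}$ of lower-numbering filtrations with subgroups, the same single-jump pattern transfers to $\tilde{\Delta}$, giving $d_{P}(\tilde{\Delta})=(m+1)(\bar q-1)$. Substituting into
\[
2\gg(K)-2=\bar q^{2}\bigl(2\gg(K^{\tilde{\Delta}})-2\bigr)+2q(m+1)(\bar q-1)
\]
and simplifying produces the announced formula $\gg(K^{\tilde{\Delta}})=(qm/\bar q-1)(q/\bar q-1)$.

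The main obstacle is the rigorous verification of the single-jump ramification filtration for $\Delta_{P}$. One route is to quote the classical Artin--Schreier fact recalled above; another is a direct local computation using a uniformizer $t$ at $P$ adapted to the pair of valuations $v_{P}(x)=q$, $v_{P}(s)=-m$ (so $t=x^{a}s^{b}$ with $aq-bm=1$, which exists because $\gcd(q,m)=1$), and checking that every nontrivial $\varphi\in\Delta_{P}$ satisfies $v_{P}\bigl(\varphi(t)-t\bigr)=m+1$, which by definition yields $\Delta_{P}^{(m)}\neq\{1\}$ and $\Delta_{P}^{(m+1)}=\{1\}$.
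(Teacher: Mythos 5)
Your proof is correct and follows essentially the same route as the paper's: both determine the short orbits of $\tilde{\Delta}$ lying over $\Omega_1\cup\Omega_2$, establish that the lower-numbering ramification filtration at each such place has a single jump at $m$, transfer this to $\tilde{\Delta}$ via $\tilde{\Delta}_P^{(i)}=\tilde{\Delta}\cap\Delta_P^{(i)}$, and then apply the Hurwitz and Deuring--Shafarevich formulas exactly as in the paper. The only (harmless) difference is that you justify the single jump by a local Artin--Schreier/uniformizer computation, whereas the paper deduces $\Delta_P^{(0)}=\cdots=\Delta_P^{(m)}$ from the tame group $W$ centralizing $\Delta_P$ (via \cite[Lemma 11.75]{HKT}) and then kills the higher ramification groups by comparing with the total different of $\Delta$.
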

\begin{proof}
From Lemma \ref{raz}, $K^{\Delta}$ is rational and the different in the Hurwitz genus formula applied to $\Delta$ is
$$\sum_{P \in \cP} \sum_{i=0}^{m} (|{\Delta_P}^{(i)}| -1)=2\gg(K)-2+2q^2=2q(m+1)(q-1),$$ where $\cP$ is the set of all places of $K$.
On the other hand, $\Delta_P$ is nontrivial if and only if $P\in \Omega_1\cup \Omega_2$. From Lemma \ref{09}, $W\times \Delta_P$ fixes $P$, and hence for any $P\in \Omega_1\cup \Omega_2$ $q={\Delta_P}^{(0)}={\Delta_P}^{(1)}=\cdots ={\Delta_P}^{(m)}$; see \cite[Lemma 11.75 (i)]{HKT}. Also $|\Omega_1|+|\Omega_2|=2q$ and $|\Delta_P|=q$. Therefore, ${\Delta_P}^{(i)}$ is trivial for every $i >m$. By the properties of the subgroups $\tilde{\Delta}_1$ and $\tilde{\Delta}_2$, this yields for any point $P\in \Omega_1\cup \Omega_2$ that $\bar{q}={\tilde{\Delta}_P}^{(0)}={\tilde{\Delta}_P}^{(1)}=\cdots ={\tilde{\Delta}_P}^{(m)}$ but ${\tilde{\Delta}_P}^{(i)}$ is trivial for $i>m$. Therefore, the different in the Hurwitz genus formula applied to $\tilde{\Delta}$ is
$$\sum_{P \in \cP} \sum_{i=0}^{m} (|{\tilde{\Delta}_P}^{(i)}| -1)=2\gg(K)-2+2q^2=2q(m+1)(\bar{q}-1),$$
Thus,
$$2(qm-1)(q-1)-2=\bar{q}^2(2\gg(K^{\tilde{\Delta}}) -2) +2q(m+1)(\bar{q}-1),$$
whence the first claim follows. Moreover, from the Deuring-Shafarevic formula applied to $\tilde{\Delta}$,
$$(q-1)^2-1=\bar{q}^2(\gamma(K^{\tilde{\Delta}})-1)+2\frac{q}{\bar{q}}(\bar{q}^2-\bar{q}),$$
whence the second claim follows.
\end{proof}
\begin{proposition} \label{qk} Let $q=\bar{q}^k$ with $k \geq 1$. Then $K^{\tilde{\Delta}}=\mathbb{K}(x',t,w)$ with
$$\left \{
\begin{array}{lll}
w+w^{\bar{q}}+\ldots+w^{{\bar{q}}^{k-1}}=x'^{-m},\\
t+t^{\bar{q}}+\ldots+t^{{\bar{q}}^{k-1}}=x'^m.
\end{array}
\right.
$$
Furthermore, $\aut_{\mathbb{K}}(K^{\tilde{\Delta}})$ has a subgroup $\bar{G}$ of order $2(q/\bar{q})^2m(\bar{q}-1)$ with
$\bar{G}=(\Delta/\tilde{\Delta})\rtimes(C_{m(\bar{q}-1)}\rtimes \langle \bar{\xi} \rangle)$.
\end{proposition}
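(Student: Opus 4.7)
The plan is to identify the fixed field $K^{\tilde{\Delta}}$ by exhibiting explicit invariants, and then to realise $\bar{G}$ as the image in $\aut_{\mathbb{K}}(K^{\tilde{\Delta}})$ of the normaliser of $\tilde{\Delta}$ inside $\aut_{\mathbb{K}}(K)$.

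First, I set $w=s'^{\bar{q}}-s'$ and $t=z'^{\bar{q}}-z'$. Under $\tilde{\Delta}$ the variable $x'$ is fixed while $s'$ and $z'$ are translated by elements of $\mathbb{F}_{\bar{q}}$; since $(s'+\alpha')^{\bar{q}}-(s'+\alpha')=s'^{\bar{q}}-s'$ whenever $\alpha'\in\mathbb{F}_{\bar{q}}$, both $w$ and $t$ are $\tilde{\Delta}$-invariant, so $\mathbb{K}(x',t,w)\subseteq K^{\tilde{\Delta}}$. To derive the defining equations, I would use $q=\bar{q}^k$ together with the telescoping identity $s'^q-s'=\sum_{i=0}^{k-1}((s')^{\bar{q}^{i+1}}-(s')^{\bar{q}^{i}})=\sum_{i=0}^{k-1}w^{\bar{q}^{i}}$, which rests on the additivity of the $\bar{q}^i$-th power Frobenius in characteristic $p$; combined with (\ref{eq16}) and (\ref{eq17}) this yields $w+w^{\bar{q}}+\cdots+w^{\bar{q}^{k-1}}=x'^{-m}$, and the analogous computation with $t$ and $z'$ produces the second equation. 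Over $\mathbb{K}(x',t,w)$ the elements $z'$ and $s'$ satisfy the Artin-Schreier polynomials $Z^{\bar{q}}-Z-t$ and $S^{\bar{q}}-S-w$, so $[K:\mathbb{K}(x',t,w)]\leq\bar{q}^2$; combined with $[K:K^{\tilde{\Delta}}]=|\tilde{\Delta}|=\bar{q}^2$ this forces $K^{\tilde{\Delta}}=\mathbb{K}(x',t,w)$.

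Second, for $\bar{G}$ I would compute the normaliser $N=N_{\aut_{\mathbb{K}}(K)}(\tilde{\Delta})$. The abelian group $\Delta$ lies in $N$; the conjugation formula from the proof of Lemma \ref{le1} gives $\varphi_{0,0,v}^{-1}\varphi_{\alpha,\beta,1}\varphi_{0,0,v}=\varphi_{v^m\alpha,v^{-m}\beta,1}$, so $\varphi'_{0,0,v}$ lies in $N$ exactly when $v^m\in\mathbb{F}_{\bar{q}}^*$. Since $(\bar{q}-1)\mid(q-1)$ follows from $q=\bar{q}^k$, this cuts out a cyclic subgroup $C_{m(\bar{q}-1)}$ of $\mathcal{V}$ of order $m(\bar{q}-1)$. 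Finally $\xi\varphi_{\alpha,\beta,1}\xi=\varphi_{\beta,\alpha,1}$ keeps parameters inside $\mathbb{F}_{\bar{q}}$, so $\xi\in N$, and therefore $N\supseteq\Delta\rtimes(C_{m(\bar{q}-1)}\rtimes\langle\xi\rangle)$, a subgroup of order $2q^2m(\bar{q}-1)$.

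Passing to the quotient modulo $\tilde{\Delta}$, since $\Delta$ is normal in $\aut_{\mathbb{K}}(K)$ by Lemma \ref{pro}(i) the factor $\Delta/\tilde{\Delta}$ is normal in the image; moreover, as $|C_{m(\bar{q}-1)}|$ and $2$ are both coprime to $|\tilde{\Delta}|=\bar{q}^2$, the cyclic factor and the involution embed faithfully, producing the subgroup $\bar{G}=(\Delta/\tilde{\Delta})\rtimes(C_{m(\bar{q}-1)}\rtimes\langle\bar{\xi}\rangle)$ of $\aut_{\mathbb{K}}(K^{\tilde{\Delta}})$ of order $2(q/\bar{q})^2m(\bar{q}-1)$. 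The main technical point is pinning down the cyclic part of the normaliser: one uses $q=\bar{q}^k$ decisively both to produce the subgroup $C_{m(\bar{q}-1)}$ inside $\mathcal{V}$ and to force its order to be exactly $m(\bar{q}-1)$. The Artin-Schreier degree bound, by contrast, is essentially automatic once the invariants $w$ and $t$ have been correctly identified.
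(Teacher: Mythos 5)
Your proof is correct and follows essentially the same route as the paper: the same invariants $x'$, $s'^{\bar q}-s'$, $z'^{\bar q}-z'$ with the same telescoping identity and the same degree count via two Artin--Schreier extensions, and for the group you carry out the normalizer-of-$\tilde{\Delta}$ argument that the paper itself names as its first option (the paper then writes down the automorphisms of $K^{\tilde{\Delta}}$ explicitly as an alternative). Incidentally, your labelling $w=s'^{\bar q}-s'$, $t=z'^{\bar q}-z'$ is the one consistent with the displayed equations, whereas the paper's proof swaps the two letters.
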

\begin{proof} First we show that $K^{\tilde{\Delta}}=\mathbb{K}(x',t,w)$ with $t=s'^{\bar{q}}-s',$ and $w=z'^{\bar{q}}-z'.$
By direct computation, both $t$ and $x'$ are fixed by $\tilde{\Delta}$. Hence $\mathbb{K}(x',t,w) \subseteq K^{\tilde{\Delta}}$. Also $[K:K^{\tilde{\Delta}}]=\bar{q}^2$.
On the other hand, both extensions $K|\mathbb{K}(x',s,w)$ and $\mathbb{K}(x',s,w)|\mathbb{K}(x',t,w)$ are (Artin-Schreier extensions) of degree $\bar{q}$,
$$[K: \mathbb{K}(x',t,w)] =[K: \mathbb{K}(x',s,w)]\cdot [\mathbb{K}(x',s,w): \mathbb{K}(x',t,w)]=\bar{q} \cdot\bar{q}=\bar{q}^2.$$
Therefore $K^{\tilde{\Delta}}=\mathbb{K}(x',t,w)$.
Since
$z'^q-z'=z'^{\bar{q}^k}-z'^{\bar{q}^{k-1}}+z'^{\bar{q}^{k-1}}- \ldots +z'^{\bar{q}}-z'^{\bar{q}} -z'=\sum_{i=0}^{k-1}w^{\bar{q}^i},$
and this remains true when $z'$ and $w$ are replaced by $s'$ and $t$,
the first claim follows. The second claim can be deduced from $\aut_{\mathbb{K}}(K)$ taking for $\bar{G}$ the normalizer of $\tilde{\Delta}$. Alternatively, a direct computation shows that
the following maps are elements of $\aut_{\mathbb{K}}(K^{\tilde{\Delta}})$:
$\bar{\varphi}_{\alpha,\beta,v}(x',t,w)=(vx',v^{-m}t+\alpha',v^mw+\beta)$ with $Tr_{\mathbb{F}_q|\mathbb{F}_{\bar{q}}}(\alpha)=Tr_{\mathbb{F}_q|\mathbb{F}_{\bar{q}}}(\beta)=0$, and $v^{m(\bar{q}-1)}=1$, and $\bar{\xi}(x',t,w)=(x'^{-1},w,t)$. These generate a group $\bar{G}$ with the properties in the second claim.
\end{proof}
\begin{corollary}
\label{corB1nov2016} If $q=\bar{q}^2$ then $K^{\tilde{\Delta}}$ is isomorphic to $F$ with parameters $(\bar{q},m)$.
\end{corollary}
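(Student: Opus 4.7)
The plan is to build an explicit isomorphism by matching the presentation of $K^{\tilde{\Delta}}$ given by Proposition~\ref{qk} against the three-variable presentation of the Galois closure afforded by Proposition~\ref{pro1}. Specialising Proposition~\ref{qk} to $k=2$, I would read off
$$K^{\tilde{\Delta}}=\mathbb{K}(x',t,w),\qquad t+t^{\bar{q}}=x'^{-m},\qquad w+w^{\bar{q}}=x'^{m},$$
with $t=s'^{\bar{q}}-s'$ and $w=z'^{\bar{q}}-z'$; the pairing of $x'^m$ with $w$ (and of $x'^{-m}$ with $t$) comes from applying the Artin--Schreier telescoping used inside the proof of Proposition~\ref{qk}, to the relations~(\ref{eq16})--(\ref{eq17}).

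The first substantive step would be to introduce the new generators
$$\bar{x}:=x',\qquad \bar{z}:=w,\qquad \bar{y}:=t+w.$$
A direct additive computation then produces
$$\bar{y}^{\bar{q}}+\bar{y}=(t+t^{\bar{q}})+(w+w^{\bar{q}})=\bar{x}^{m}+\bar{x}^{-m}, \qquad \bar{z}^{\bar{q}}+\bar{z}=w+w^{\bar{q}}=\bar{x}^{m},$$
which are exactly the two defining relations (\ref{eq2}) and (\ref{eq3}) of Proposition~\ref{pro1} for the parameter pair $(\bar{q},m)$. The second step is to note that $t=\bar{y}-\bar{z}$ and $w=\bar{z}$, so that $\mathbb{K}(\bar{x},\bar{y},\bar{z})=\mathbb{K}(x',t,w)=K^{\tilde{\Delta}}$; the assignment $\bar{x}\mapsto x'$, $\bar{y}\mapsto t+w$, $\bar{z}\mapsto w$ therefore extends to a $\mathbb{K}$-isomorphism between $K^{\tilde{\Delta}}$ and the field with presentation~(\ref{eq2})--(\ref{eq3}) for parameters $(\bar{q},m)$.

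As a consistency check I would compare genera: Lemma~\ref{gprank} gives $\gg(K^{\tilde{\Delta}})=(\bar{q}m-1)(\bar{q}-1)$, matching the value supplied by Stichtenoth~\cite{SH} for the Galois closure attached to the parameters $(\bar{q},m)$. No serious obstacle is anticipated; once Proposition~\ref{qk} is in hand this reduces to a change-of-variables verification. The only bookkeeping point that requires care is the correct identification of which of $t$ and $w$ carries the $x'^{m}$ and which carries the $x'^{-m}$, so that $\bar{y}=t+w$ (rather than $t-w$ or $w$ alone) produces the symmetric right-hand side $\bar{x}^m+\bar{x}^{-m}$, together with the harmless signs coming from the $s',z'$-normalisation relative to $s,z$.
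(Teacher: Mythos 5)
Your argument is correct and is essentially the paper's own (implicit) proof: the corollary is presented as an immediate consequence of Proposition~\ref{qk} with $k=2$, and your change of variables $\bar{x}=x'$, $\bar{z}=w$, $\bar{y}=t+w$ merely makes explicit that the resulting presentation is exactly (\ref{eq2})--(\ref{eq3}) with parameters $(\bar{q},m)$. The only point worth flagging is one your own genus check already reveals: ``$F$ with parameters $(\bar{q},m)$'' in the statement must be read as the Galois closure $\mathbb{K}(x,y,z)$ of $F|H$ (the shorthand used throughout Section~\ref{quot}), since the two-variable field $F$ itself has genus $m(\bar{q}-1)$, not $(\bar{q}m-1)(\bar{q}-1)$ --- and that is precisely what your isomorphism produces.
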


From Lemma \ref{gprank}, for every $q=\bar{q}^k$ with  $k \geq 1$, $K^{\tilde{\Delta}}$ has the same genus and $p$-rank of the function field $F$ with parameters $(q/ \bar{q},m)$. Moreover, from Proposition \ref{qk}, $K^{\tilde{\Delta}}=\mathbb{K}(x',t,w)$ with
$$\left \{
\begin{array}{lll}
(w+w^{\bar{q}}+\ldots+w^{{\bar{q}}^{k-1}})(t+t^{\bar{q}}+\ldots+t^{{\bar{q}}^{k-1}})=1,\\
t+t^{\bar{q}}+\ldots+t^{{\bar{q}}^{k-1}}=x'^m,
\end{array}
\right. $$
and $\aut_{\mathbb{K}}(K^{\tilde{\Delta}})$ has a subgroup $\bar{G}$ of order $2(q/\bar{q})^2m(\bar{q}-1)$ with
$$\bar{G}=(\Delta/\tilde{\Delta})\rtimes(C_{m(\bar{q}-1)}\rtimes \langle \bar{\xi} \rangle)=
(\Delta/\tilde{\Delta}\rtimes(C_{m(\bar{q}-1)})\rtimes \langle \bar{\xi} \rangle $$
where the subgroup $\bar{W}=\Delta/\tilde{\Delta}\rtimes C_{m(\bar{q}-1)}$ consists of all maps $\bar{\varphi}_{\alpha,\beta,v}(x',t,w)=(vx',v^{-m}t+\alpha,v^mw+\beta)$ with $Tr_{\mathbb{F}_q|\mathbb{F}_{\bar{q}}}(\alpha)=Tr_{\mathbb{F}_q|\mathbb{F}_{\bar{q}}}(\beta)=0$, $v^{m(\bar{q}-1)}=1$, whereas $\bar{\xi}(x',t,w)=(x'^{-1},w,t)$. In particular, the subgroup $C_m$ consisting of all
maps $\bar{\varphi}_{0,0,v}$ with $v^m=1$ is the center $Z(\bar{W})$ of $\bar{W}$, and $C_m$ is a normal subgroup of $\bar{G}$.

By Corollary \ref{corB1nov2016}, if  $q=\bar{q}^2$ then $K^{\tilde{\Delta}}$ and $F$ with parameter $(q/\bar{q},m)$ are isomorphic. Our aim is to prove that the converse also holds.

For this purpose, it is useful to view $\mathbb{K}(x',t,w)$ as a degree $m$ Kummer extension of the function field $L=\mathbb{K}(t,w)$ where $(w+w^{\bar{q}}+\ldots+w^{{\bar{q}}^{k-1}})(t+t^{\bar{q}}+\ldots+t^{{\bar{q}}^{k-1}})=1$. Since $L$ is the fixed field of $C_m$, and $C_m$ is a normal subgroup of $\bar{G}$, the factor group $\bar{G} / C_m$
is a subgroup of $\aut(L)$.
By direct computation, $\bar{G}/C_m$ contains the subgroup $\Delta^*$ consisting all maps $\bar{\varphi}_{\alpha,\beta}(t,w)=(t+\alpha,w+\beta)$ with $Tr_{\mathbb{F}_q|\mathbb{F}_{\bar{q}}}(\alpha)=Tr_{\mathbb{F}_q|\mathbb{F}_{\bar{q}}}(\beta)=0$ as well as the involution $\xi^*(t,w)=(w,t)$, and the subgroup $C_{\bar{q}-1}=\{\eta^*(t,w)=(\lambda t, \lambda^{-1} w)|\lambda^{\bar{q}-1}=1\}$. Therefore, $\bar{G}/C_m\cong (\Delta^*\rtimes C_{\bar{q}-1})\rtimes \langle \xi^*\rangle.$ Furthermore, $\Delta^*$ has two short orbits $\Omega_1^*$ and $\Omega_2^*$, the former consisting of all places centered at the infinite point $W_\infty$  of the curve $(W+W^{\bar{q}}+\ldots+W^{{\bar{q}}^{k-1}})(T+T^{\bar{q}}+\ldots+T^{{\bar{q}}^{k-1}})=1$, the latter one of those
centered at the other infinite point $T_\infty$. Both points at infinity are ordinary singular points with multiplicity $q/\bar{q}$. Now look at $\mathbb{K}(t,w)|\mathbb{K}(t)$ as a generalized Artin-Schreier extension of degree $q/ \bar{q}$. Then the (unique) zero of $t$ is totally ramified while each pole of $t$ is totally unramified. More precisely, $\Div(t)_0=(q/\bar{q}) P,$ while $\Div(t)_\infty = \sum_{i=1}^{q/\bar{q}}T_i$ with $\Omega^*_1=\{T_1,\ldots,T_{q/\bar{q}}\}$
where $P$ is the place corresponding to the unique branch centered at $W_\infty$ whose tangent has equation $T=0$.
By a direct computation, $C_{\bar{q}-1}$ fixes $P$ and acts transitively on the remaining $q/\bar{q}-1$ places in $\Omega^*_1$. Analogous results hold for $w$ and $\Omega^*_2$. Hence $C_{\bar{q}-1}$ fixes a unique point in $\Omega^*_2$ and acts transitively on the remaining $q/\bar{q}-1$ places in $\Omega^*_2$.
\begin{lemma} \label{genAM} Let $C \leq \aut_\mathbb{K}(L)$ be a cyclic group containing $C_{\bar{q}-1}$. If $C$ is in the normalizer $N_{\aut_\mathbb{K}(L)}(\Delta^*)$ and  leaves both short orbits of $\Delta^*$ invariant, then $C=C_{\bar{q}-1}$.
\end{lemma}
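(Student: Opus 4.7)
The plan is to show every $d\in C$ has the form $\eta_\mu$ for some $\mu\in\mathbb{F}_{\bar q}^*$, so $C\subseteq C_{\bar q-1}$; combined with the hypothesis $C\supseteq C_{\bar q-1}$ this gives $C=C_{\bar q-1}$. For brevity I write $A(X):=X+X^{\bar q}+\cdots+X^{\bar q^{k-1}}$; the defining relation of $L=\mathbb{K}(t,w)$ then reads $A(t)A(w)=1$, and $\ker A$ denotes the $\mathbb{F}_{\bar q}$-subspace of $\mathbb{F}_q$ on which $A$ vanishes (the trace-zero elements).

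First I would show that every $d\in C$ fixes both $P$ and $P'$. The computation $\eta_\lambda\tau_{0,\beta}\eta_\lambda^{-1}=\tau_{0,\lambda^{-1}\beta}$, together with the free multiplicative action of $\mathbb{F}_{\bar q}^*$ on $\ker A\setminus\{0\}$, identifies $P$ as the unique fixed point of $C_{\bar q-1}$ in $\Omega^*_1$. Since $C$ is abelian and preserves $\Omega^*_1$ by hypothesis, $d(P)$ is also a fixed point of $C_{\bar q-1}$ in $\Omega^*_1$, forcing $d(P)=P$; the symmetric argument on $\Omega^*_2$ gives $d(P')=P'$.

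Next I would descend to $L^{\Delta^*_P}=\mathbb{K}(t)$. Since $d$ fixes $P$ and normalizes $\Delta^*$, it normalizes $\Delta^*_P=\{\tau_{0,\beta}:\beta\in\ker A\}$, hence induces $\bar d\in\mathrm{Aut}_{\mathbb K}(\mathbb K(t))\cong PGL_2(\mathbb{K})$. Tracking valuations shows that $\bar d$ fixes the two places of $\mathbb K(t)$ below $P$ and $P'$, namely $t=0$ and $t=\infty$, so $\bar d:t\mapsto\mu t$ for some $\mu\in\mathbb{K}^*$. The $C$-invariance of $\Omega^*_1$ further forces $\bar d$ to permute the $\bar q^{k-1}$ places $\{t=c:c\in\ker A\}$, i.e.\ $\mu\cdot\ker A=\ker A$. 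This already confines $\mu$ to $\mathbb F_q$, and the trace-pairing duality on $\mathbb F_q$ identifying the annihilator of $\ker A=\ker\Tr_{\mathbb F_q|\mathbb F_{\bar q}}$ with $\mathbb F_{\bar q}$ then pins down $\mu\in\mathbb{F}_{\bar q}^*$.

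Finally I would solve for $d(w)$ and conclude. With $\mu\in\mathbb{F}_{\bar q}^*$ and $A$ being $\mathbb{F}_{\bar q}$-linear, the preserved equation $A(d(w))A(\mu t)=1$ yields $A(d(w))=\mu^{-1}A(w)=A(\mu^{-1}w)$, so $d(w)-\mu^{-1}w\in\ker_L(A)$. A pole-order argument (at any pole of a non-constant $f\in L$ the summand $f^{\bar q^{k-1}}$ strictly dominates all other terms of $A(f)$) forces $\ker_L(A)\subset\mathbb{K}$, so $d(w)=\mu^{-1}w+g$ with $g\in\ker A\subset\mathbb{K}$ a constant; equivalently $d=\eta_\mu\circ\tau_{0,\mu g}$. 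Imposing $d\eta_\lambda=\eta_\lambda d$ for a generator of $C_{\bar q-1}$ (forced by cyclicity of $C$) reduces to $(\lambda^{-1}-1)g=0$, and since $C_{\bar q-1}$ is nontrivial this gives $g=0$ and $d=\eta_\mu\in C_{\bar q-1}$. The step I expect to be the main technical obstacle is the arithmetic claim above that $\mu\cdot\ker A=\ker A$ forces $\mu\in\mathbb{F}_{\bar q}^*$; this is where the multiplicative rigidity of the trace-zero subspace $\ker A\subset\mathbb F_q$ is essentially used.
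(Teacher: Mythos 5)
Your proof is correct and follows essentially the same route as the paper's: both arguments use commutation with $C_{\bar q-1}$ to show that every element of $C$ fixes the distinguished places $P$ and $P'$, deduce from the divisor of $t$ (resp.\ $w$) that the element acts by $t\mapsto\mu t$ up to the data you then eliminate, and finally pin the scalar down to $\mathbb{F}_{\bar q}^*$. The only divergence is in that last step: the paper obtains $\mu^{\bar q-1}=1$ by a direct computation with the invariance of the defining relation $A(t)A(w)=1$, whereas you invoke the normalizer hypothesis and the multiplicative rigidity of $\ker\Tr_{\mathbb{F}_q|\mathbb{F}_{\bar q}}$ via trace duality --- both are valid.
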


\begin{proof}
Let $C=\langle c \rangle$. Then $c$ preserves both $\Omega^*_1$. Since $c$ commutes with $C_{\bar{q}-1}$, it fixes $P$. Thus $t$ and the image $c(t)$ of $t$ by $c$ have the same poles and the same zero. Therefore, $c(t)=\rho t$ with some $\rho \in \mathbb{K}^*$. Analogously, $c(w)= \sigma w$ with some $\sigma\in \mathbb{K}^*$. By a straightforward computation, this yields $\rho=\sigma$ and $\rho^{\bar{q}-1}=1$.  Hence $c$ has order at most $\bar{q}-1$ and the claim holds.
\end{proof}

\begin{corollary} Let $q=\bar{q}^k$. Then $k\leq 2$ is the necessary and sufficient condition for $K^{\tilde{\Delta}}$ to be isomorphic to $F$ with parameter $(q / \bar{q},m)$.
\end{corollary}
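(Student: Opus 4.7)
The sufficiency of $k\leq 2$ is already available: for $k=1$ we have $\tilde{\Delta}=\Delta$ and $K^{\Delta}$ is rational by Lemma~\ref{raz} (matching the degenerate case $q/\bar{q}=1$), while $k=2$ is exactly Corollary~\ref{corB1nov2016}. For the necessity direction I argue by contradiction: assume $k\geq 3$ and fix an isomorphism $\iota\colon K^{\tilde{\Delta}}\stackrel{\sim}{\to}\bar{K}$, where $\bar{K}$ denotes the Galois-closure object of this paper associated with the parameters $(q',m)$, $q':=q/\bar{q}$. The strategy is to transport the automorphism-theoretic structure of $\bar{K}$ back to $K^{\tilde{\Delta}}$ in order to produce a cyclic subgroup of $\aut_{\mathbb{K}}(L)$ violating Lemma~\ref{genAM}.

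First I would identify $L$ with the Artin--Mumford subfield of $\bar{K}$. Since $\gamma(K^{\tilde{\Delta}})=(q'-1)^2$ by Lemma~\ref{gprank}, Nakajima's bound (exactly as in the proof of Lemma~\ref{nak}) forces the unique Sylow $p$-subgroup of $\aut_{\mathbb{K}}(K^{\tilde{\Delta}})$ to have order precisely $(q')^2$, so it equals $\Delta^*:=\Delta/\tilde{\Delta}$; under $\iota$ it must then correspond to the Sylow $p$-subgroup $\bar{\Delta}$ of $\aut_{\mathbb{K}}(\bar{K})$ given by Theorem~\ref{mr} applied to $(q',m)$. By Lemma~\ref{lem4Agosto2016} applied to $\bar{K}$, the centralizer of $\bar{\Delta}$ in $\aut_{\mathbb{K}}(\bar{K})$ equals $\bar{\Delta}\times\hat{W}$ with $\hat{W}$ cyclic of order $m$; comparing orders with $\Delta^*\times C_m\subseteq C_{\aut_{\mathbb{K}}(K^{\tilde{\Delta}})}(\Delta^*)$ shows that $\iota$ sends $C_m$ onto $\hat{W}$. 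Passing to fixed subfields, $\iota$ restricts to an isomorphism $\iota_L\colon L\to\bar{AM}$, where $\bar{AM}=\bar{K}^{\hat{W}}$ is the Artin--Mumford subfield built exactly as in the proof of Proposition~\ref{pro1} with $q$ replaced by $q'$.

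Finally I would exploit the large automorphism group of $\bar{AM}$. By \cite{KMam} (invoked also in the proof of Lemma~\ref{artm}), $\aut_{\mathbb{K}}(\bar{AM})=\bar{\Delta}\rtimes(C_{q'-1}\rtimes\langle\tilde{\xi}\rangle)$ has order $2(q')^2(q'-1)$, and the index-two subgroup preserving both short orbits of $\bar{\Delta}$ is the split extension $\bar{\Delta}\rtimes C_{q'-1}$. By Schur--Zassenhaus applied inside this semidirect product, the cyclic $p'$-subgroup $\iota_L(C_{\bar{q}-1})$ is $\bar{\Delta}$-conjugate to a subgroup of $C_{q'-1}$; pulling back through $\iota_L^{-1}$, there exist $g\in\Delta^*$ and a cyclic subgroup $\tilde{C}\leq\aut_{\mathbb{K}}(L)$ of order $q'-1$ preserving both short orbits, such that $gC_{\bar{q}-1}g^{-1}\subseteq\tilde{C}$, equivalently $C_{\bar{q}-1}\subseteq \tilde{C}':=g^{-1}\tilde{C}g$. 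Since $\Delta^*$ is normal in $\aut_{\mathbb{K}}(L)$ and $g\in\Delta^*$ fixes each short orbit setwise, $\tilde{C}'$ remains a cyclic group of order $q'-1$ in the normalizer of $\Delta^*$ preserving both short orbits and containing $C_{\bar{q}-1}$, so Lemma~\ref{genAM} forces $\tilde{C}'=C_{\bar{q}-1}$. This yields $q'-1=\bar{q}-1$, hence $k=2$, contradicting $k\geq 3$. The main obstacle is the intrinsic identification of $L$ with $\bar{AM}$ under $\iota$ in the first step; once that is in place, the remainder is a short Schur--Zassenhaus argument combined with Lemma~\ref{genAM}.
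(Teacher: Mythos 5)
Your proposal is correct and follows essentially the same route as the paper: both reduce the necessity direction to producing a cyclic group of order $q/\bar{q}-1$ in the normalizer of $\Delta^*$ inside $\aut_{\mathbb{K}}(L)$ that contains $C_{\bar{q}-1}$ and preserves both short orbits, and then invoke Lemma~\ref{genAM} to force $q/\bar{q}-1\le\bar{q}-1$, i.e.\ $k\le 2$. The only difference is one of detail: the paper compresses the transport of structure into ``the discussion after Corollary~\ref{corB1nov2016},'' whereas you make it explicit (identifying $L$ with the Artin--Mumford subfield via the centralizer $\Delta^*\times C_m$ and conjugating the image of $C_{\bar q-1}$ into the Hall $p'$-complement), which is a welcome elaboration rather than a different argument.
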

\begin{proof} By Corollary \ref{corB1nov2016} we only have to prove the necessary condition.
 By absurd, $K^{\tilde{\Delta}}$ and $F$ with parameter $(q / \bar{q},m)$ have isomorphic $\mathbb{K}$-automorphism groups. From Theorem \ref{mr},  $\aut_{\mathbb{K}}(K^{\tilde{\Delta}})$ has a cyclic group of order $q/\bar{q}-1$ contained in the normalizer of $\tilde{\Delta}$. From the discussion after Corollary \ref{corB1nov2016}, this yields the  existence of a cyclic group $C$ of the same order $q/\bar{q}-1$ satisfying the hypotheses in Lemma \ref{genAM}. Therefore, $q/\bar{q}-1\le \bar{q}-1$ whence $k\leq 2$.
 \end{proof}

\begin{rem} {\em{From Corollary \ref{corB1nov2016}, a tower $\mathbb{K}(x)\subset F_1 \subset \cdots \subset F_i \subset \ldots$ arises where $q=p^{2^i}$ and $F_i$ is a function field isomorphic to $\mathbb{K}(x,y,z)$ defined by $y^{q}+y=x^m+x^{-m}$ and $z^q+z=x^m$. By Theorem \ref{mains},
$$\lim_{i=\infty}=\frac{|\aut_{\mathbb{K}}(F_i)|}{\gg(F_i)^{3/2}}=\frac{2}{\sqrt m}. $$}}
\end{rem}

    \end{document}